\crefname{theorem}{Theorem}{Theorems}
\crefname{remark}{Remark}{Remarks}
\crefname{lemma}{Lemma}{Lemmas}
\crefname{table}{Table}{Tables}
\crefname{figure}{Figure}{Figures}
\newtheorem{remark}{\textbf{Remark}}[section]
\newtheorem{prop}{\textbf{Proposition}}[section]
\crefname{prop}{proposition}{propositions}
\Crefname{prop}{Proposition}{Propositions}
\newtheorem{assumption}{\textbf{Assumption}}[section]
\crefname{assumption}{assumption}{assumption}
\Crefname{assumption}{Assumption}{Assumption}
\newtheorem{corollary}{\textbf{Corollary}}[section]
\newtheorem{theorem}{\textbf{Theorem}}[section]
\newtheorem*{leib*}{\textit{Leibniz's rule}}
\newtheorem{lemma}{\textbf{Lemma}}[section]
\newtheorem{definition}{Definition}[section]
\crefname{prop}{definition}{definitions}
\Crefname{prop}{Definition}{Definitions}
\newsavebox\myboxA
\newsavebox\myboxB
\newlength\mylenA
\newcommand*\xoverline[2][1.0]{%
    \sbox{\myboxA}{$\m@th#2$}%
    \setbox\myboxB\null
    \ht\myboxB=\ht\myboxA%
    \dp\myboxB=\dp\myboxA%
    \wd\myboxB=#1\wd\myboxA
    \sbox\myboxB{$\m@th\overline{\copy\myboxB}$}
    \setlength\mylenA{\the\wd\myboxA}
    \addtolength\mylenA{-\the\wd\myboxB}%
    \ifdim\wd\myboxB<\wd\myboxA%
       \rlap{\hskip 0.5\mylenA\usebox\myboxB}{\footnotesize{\usebox\myboxA}}%
    \else
        \hskip -0.5\mylenA\rlap{\usebox\myboxA}{\footnotesize{\hskip 0.5\mylenA\usebox\myboxB}}%
    \fi}
\newcommandx{\unsure}[2][1=]{\todo[linecolor=red,backgroundcolor=red!25,bordercolor=red,#1]{#2}}
\newcommandx{\change}[2][1=]{\todo[linecolor=blue,backgroundcolor=blue!25,bordercolor=blue,#1]{#2}}
\newcommandx{\info}[2][1=]{\todo[linecolor=OliveGreen,backgroundcolor=OliveGreen!25,bordercolor=OliveGreen,#1]{#2}}
\newcommandx{\improvement}[2][1=]{\todo[linecolor=Plum,backgroundcolor=Plum!25,bordercolor=Plum,#1]{#2}}
\newcommandx{\thiswillnotshow}[2][1=]{\todo[disable,#1]{#2}}
\newcommand\widhat[1]{%
\savestack{\tmpbox}{\stretchto{%
  \scaleto{%
    \scalerel*[\widthof{\ensuremath{#1}}]{\kern-.6pt\bigwedge\kern-.6pt}%
    {\rule[-\textheight/2]{1ex}{\textheight}}
  }{\textheight}%
}{0.5ex}}%
\stackon[1pt]{#1}{\tmpbox}%
}
\newcommand{\R}{\mathbb{R}}
\newcommand{\E}{\mathbb{E}}
\definecolor{newcolor}{rgb}{.8,.349,.1}
\title{Frictional martingale optimal transport and robust hedging}
\author[1,2]{Pratik Rai\corref{cor1}} 
\address[1]{Quantitative Risk Management, ASN Bank}
\address[2]{Dept. of Mathematics and Computer Science, Eindhoven University of Technology}
\providecommand{\keywords}[1]
{
  \small	
  \textbf{Keywords. } #1
}
\providecommand{\AMS}[1]
{
  \small	
  \textbf{AMS subject classifications. } #1
}
\date{\today}
\begin{document}

\maketitle

\begin{abstract}
We study the martingale optimal transport problem with state‐dependent trading frictions and develop a geometric and duality framework extending from the one time–step to the multi–marginal setting. Building on the left–monotone structure of frictionless MOT (Beiglb{\"o}ck and Juillet, Ann. Probab., 2016; Henry-Labord{\`e}re and Touzi, Finance Stoch., 2016; Beiglb{\"o}ck et al., Ann. Probab., 2017), we introduce a convex frictional cost combining proportional bid–ask spreads and quadratic liquidity impacts. The framework extends the martingale Spence–Mirrlees condition to nonlinear frictions and establishes a frictional monotonicity principle.

At each time step, the joint distribution between consecutive asset prices exhibits a bi–atomic, monotone geometry: conditional on the current price, the next price lies on one of two monotone branches representing upward and downward rebalancing. A no–transaction region, or trade band, arises where maintaining the position is optimal, while outside the band, transitions follow two monotone graphs whose endpoints satisfy an equal–slope condition balancing continuation value and marginal trading cost.

The framework extends dynamically via a recursive identity, ensuring stability and convergence to the frictionless left–curtain limit, and applies to model‐independent pricing and robust hedging of path‐dependent derivatives.
\end{abstract}

\keywords{Martingale optimal transport, robust hedging, transaction costs, liquidity, left-monotone coupling, trade bands}

\AMS{60G42, 91G20, 91G80, 49K30, 90C46}

\tableofcontents

\section{Introduction}
\label{sec:intro}

In robust hedging one seeks strategies and price bounds that do not rely on a single probabilistic model but are consistent with primitive market information such as the one–dimensional marginals of the underlying at a finite set of dates. From the viewpoint of an electronic market maker or a high–frequency trading desk, the central operational object is the inventory, i.e., the number of shares (or contracts) held between decision times $t$ and $t+1$, and rebalancing consists of executing an order of size $\Delta_t$ to move the position from $t$ to $t+1$. These trades are discrete and incur frictions: a spread–like linear cost proportional to $\abs{\Delta_t}$ and a quadratic or transient impact cost proportional to $\Delta^2_t$. Such costs directly shape turnover and the geometry of optimal rebalancing, echoing classical execution and market–making models in which proportional spreads and quadratic penalties proxy liquidity and impact \cite{AlmgrenChriss2001,AvellanedaStoikov2008,ObizhaevaWang2013,GueantLehalle2015}. In this setting, superhedging becomes especially relevant because attainable bounds depend not only on the payoff and the marginal constraints but also on the cost of transporting probability mass across states: the frictional penalty induces explicit inaction thresholds (no–trade bands) that identify when the inventory should be left unchanged, and—once these thresholds are crossed—determines target adjustments and order sizes for the next rebalance. Concretely, the linear component governs the width of the no–trade region (where small moves are optimally ignored), while the quadratic component calibrates the magnitude of the optimal jump in holdings when a trade is undertaken. As a result, the pricing problem yields operational prescriptions both for when not to trade and for how far to rebalance when one does.

Mathematically, the robust superhedging problem can be formulated as a stochastic control problem with a martingale state process subject to pathwise constraints. The objective is to maximize the expected payoff~$\Phi$ of a contingent claim over all martingale measures~$\pi$ whose marginals $(\mu_0,\mu_1,\dots,\mu_N)$ coincide with the market-implied distributions of the underlying asset at discrete trading dates. Each marginal~$\mu_t$ represents the \emph{risk-neutral law} of the underlying price at time~$t$, recoverable in practice from observed call prices via the Breeden--Litzenberger identity~\cite{BL1978}. In this setting, the superhedging price is the smallest initial capital required to dominate the payoff under all admissible martingale measures---equivalently, the largest value achievable by optimizing~$\Phi$ over all such models consistent with the observed marginals. This reformulation casts the problem as a martingale optimal transport (MOT) problem, where one seeks a joint law on~$\mathbb{R}^{N+1}$ with prescribed one-dimensional marginals that maximizes an expected payoff subject to the martingale constraint $\mathbb{E}[S_{t+1} \mid S_t] = S_t$~\cite{BLP2013,DolinskySoner2014,BeiglbockNutzTouzi2017}.  

The dual formulation of this problem admits a natural financial interpretation. It describes the construction of semi--static hedging portfolios combining predictable dynamic strategies in the underlying with static holdings in vanilla options, ensuring that the portfolio’s terminal value dominates the claim across all consistent models. The dual variables---typically a pair of functions $(\varphi_t, \psi_t)$ and a predictable slope~$h_t$---represent, respectively, the costs of static positions and the incremental trading strategy that enforces the martingale condition. Convex order emerges as the feasibility condition guaranteeing the existence of such martingale measures, while the dual problem expresses the sharpest model-independent price bounds attainable given the marginal information. In continuous time, this construction links directly to the Skorokhod embedding problem, which represents admissible martingale measures as embeddings of Brownian motion with a prescribed terminal distribution~\cite{BHR2001,O2004,H2011}. This connection has yielded robust bounds for a wide range of path-dependent claims, including barrier, lookback, and Asian options, as well as no-arbitrage conditions for implied volatility surfaces~\cite{C2004,C2007,DH2007,CHO2008,CO20111,CO20112,CW2013,OdRG2015,DOR2014,HN2012}. In discrete time, the martingale optimal transport approach provides an analogous structure as it characterizes extremal martingale couplings through \emph{left--monotone geometry}, guarantees the existence and uniqueness of optimizers, and gives explicit expressions for the superhedging bounds~\cite{BJ2016,HLT2016,BeiglbockNutzTouzi2017}.  

The present work builds on this discrete-time MOT foundation and extends it to incorporate state-dependent frictions, thereby enriching the geometry of optimal couplings. When transaction costs or execution frictions are present, the stochastic control problem is modified by an additional penalty on the displacement of the underlying across periods i.e. a penalty that captures the cost of transporting probability mass in state space. The resulting dual variables not only determine the hedging instruments and trading rates but also encode optimal rebalancing regions and no-trade zones, where marginal costs of adjustment exceed marginal benefits. This extension links robust hedging to modern models of optimal execution and liquidity provision, where the geometry of the optimal coupling translates directly into practical trading prescriptions for when to remain inactive, when to rebalance, and how the scale of the transaction should adjust to the frictional regime.

In this work we focus on the discrete–time setting, where the MOT formulation admits a particularly transparent structure on the real line. At the level of a single time–step, optimizers exhibit a left–monotone support and a bi–atomic disintegration on the active set where transport is required \cite{BJ2016,HLT2016}. On the line, dual attainment and full duality are by now well understood \cite{BeiglbockNutzTouzi2017}. Subsequent contributions have extended this framework to incorporate additional features and to refine regularity and stability properties. Introducing trading frictions through a convex penalty on the trade increment modifies the one time–step objective by subtracting a convex function of the velocity while preserving the martingale constraint, thereby changing the variational selection of the support. Linear–quadratic specifications, motivated by spread–dominated costs and transient impact, are canonical in the execution literature \cite{AlmgrenChriss2001,ObizhaevaWang2013} and integrate seemlessly wiithin the analytic structure of MOT. On the computational side, martingale analogues of the Benamou–Brenier dynamic formulation lead to convex saddle–point problems amenable to augmented–Lagrangian or ADMM schemes, providing practical algorithms for calibration and transport under martingale constraints \cite{GuoLoeperWang2018,GuoLoeperAAP2021}.

The central object of study here is the transport map generated by the frictional MOT problem in one dimension. In classical optimal transport with quadratic cost, Brenier’s theorem ensures that the optimal plan is induced by a single monotone map $T=\nabla\varphi$ pushing a measure $\mu$ to $\eta$ \cite{Brenier1991,McCann1995,Villani2009}. This static description aligns with the dynamical Benamou–Brenier formulation, where the geodesic $(\rho_t)_{t\in[0,1]}$ solves a continuity equation with minimal kinetic energy and initial velocity given by the Brenier vector field \cite{BenamouBrenier2000,Villani2009}. For general costs, the Spence–Mirrlees (or twist) condition $\partial_{xy}^2 c(x,y)<0$ rules out multi–valued correspondences and enforces a graph solution \cite{Carlier2003,Villani2009}. In martingale optimal transport the geometry is fundamentally different: the martingale constraint fixes conditional means, so even without frictions the optimizer typically splits mass along two monotone graphs leading to the left–curtain structure of \cite{BJ2016,HLT2016}. The appropriate one–dimensional monotonicity condition is a martingale Spence–Mirrlees property for the cost function $\widetilde c(x,y)=\mathsf V(y)-\mathsf V(x)-f(x,y-x)$. In a smooth regime this condition reads $\partial_{xyy}\widetilde c>0$ (equivalently, a rectangle inequality) and replaces the classical twist \cite{HLT2016,BeiglbockNutzTouzi2017}. With frictions, a new feature emerges in the form of no–trade region (or trade band) on which the identity coupling is optimal and mass remains on the diagonal. The band is characterized in dual variables by the subgradient condition $h_t(x)\in\partial_v f_t(x,0)$, while off the band the martingale kernel remains bi–atomic with endpoints selected by an equal–slope condition balancing continuation values against marginal trading costs. This geometry yields immediate implications for robust pricing of path–dependent claims and execution statistics such as turnover and cost, and it delivers comparative statics in liquidity parameters together with stability and vanishing–friction limits that recover the left–curtain coupling \cite{BJ2016,HLT2016,BeiglbockNutzTouzi2017,GuoLoeperAAP2021,GuoLoeperWang2018}.

The plan of the present work is as follows. Section~\ref{sec:setup} introduces the market setup, admissible payoffs, marginal constraints, and the frictional cost specification, and formulates the primal and dual problems. Section~\ref{sec:fric_geometry} develops the one time–step geometry of the frictional martingale optimal transport problem, establishing left–monotone structure, the equal–slope characterization, and the emergence of trade bands. Section~\ref{sec:bands_applications} analyzes trade–band geometry in detail, derives explicit displacement rules in the linear–quadratic case, and discusses their financial implications. Section~\ref{sec:multi_marginal} extends the analysis to the multi–marginal setting via dynamic programming, proving timewise preservation of the geometric structure and stability of optimal couplings. Section~\ref{sec:proof_strong_duality} establishes the proof for strong duality for the multi–marginal problem with state–dependent frictions.

\subsection{Contributions}

The main contributions of this paper are threefold.

\begin{enumerate}
    \item \textbf{Frictional one time–step geometry and trade bands.}  
    We introduce a frictional extension of the martingale optimal transport problem in discrete time, where transaction costs depend on both state and displacement through a convex function \( f^{(a,b)}_t(x,v) \).  
    Under a frictional martingale Spence–Mirrlees (MSM) condition, we establish a \emph{frictional monotonicity principle} showing that any optimizer admits a bi–atomic disintegration supported on two monotone graphs \( T_d^{(t)} \) and \( T_u^{(t)} \), together with an identity band \( B_t = \{x : h_t(x) \in \partial_v f^{(a,b)}_t(x,0)\} \) on which the no–trade condition holds.  
    The band boundaries are characterized by an equal–slope system balancing continuation values and marginal trading costs, thereby connecting convex analysis, geometry, and trading behavior.

    \item \textbf{Stability and vanishing–friction limit.}  
    We prove that the family of optimal couplings and endpoints is stable under joint perturbations of marginals and friction parameters.  
    In particular, when the linear and quadratic friction coefficients \((\alpha_n,\beta_n)\) vanish, the frictional optimizer converges in law and \(L^1\) to the frictionless left–curtain coupling of \cite{BJ2016,HLT2016}.  
    This provides a quantitative bridge between frictional and frictionless martingale geometries and clarifies how transaction costs regularize the support of optimal transport.

    \item \textbf{Multi–marginal extension and strong duality.}  
    Using dynamic programming and measurable selection, we extend the one–step analysis to a multi–marginal setting, showing that the bi–atomic structure and trade–band geometry propagate across time.  
    We then establish strong duality between the primal martingale optimal transport problem with frictions and its dual formulation involving semi–static portfolios.

\end{enumerate}

Beyond their theoretical interest, these results provide a rigorous foundation for understanding the interaction between liquidity costs, hedging geometry, and robust pricing. The framework developed here unifies concepts from optimal transport, stochastic control, and execution theory, and opens the way to practical algorithms for frictional calibration and stability analysis in financial markets.

\section{Setup and Problem Formulation}
\label{sec:setup}

\subsection{Discrete-time market model and marginal structure}
\label{subsec:pathspace_marginals}

We consider a discrete-time financial market model defined over a finite time horizon \( T > 0 \), subdivided into \( N \in \mathbb{N} \) periods. The canonical path space is given by
\[
S_t(\omega) := \omega_t, \quad \text{for } t = 0,\dots,N,
\]
where each element \( \omega = (\omega_0,\dots,\omega_N) \in \mathbb{R}^{N+1} \) describes a discrete trajectory of the price process. The coordinate process \( (S_t)_{t=0}^N \) represents the underlying asset price at each time step, and is endowed with the natural filtration $\mathcal{F}_t := \sigma(S_0,\dots,S_t)$, capturing the available information up to time $t$. We denote by $\mathcal{F} := \mathcal{F}_N$ the terminal sigma-algebra.

The pricing framework is model-independent in the sense that we do not specify a single probability measure to govern the underlying asset dynamics. Instead, we are only given a sequence of one-dimensional marginal distributions $(\mu_t)_{t=0}^N \subset \mathcal{P}(\mathbb{R})$, prescribing the law of $S_t$ at each time $t$. 

Here we recall the assumptions that we place on the marginals:

\begin{assumption}[Convex order and peacock]
\label{ass:convex_order}

\smallskip
\begin{itemize}
  \item[\bf A1.] For every \( t = 0, \dots, N-1 \), we assume that
  \[
  \mu_t \preceq_{\mathrm{cx}} \mu_{t+1},
  \]
  i.e., the sequence \( (\mu_t) \) is increasing in convex order. Moreover, there exists \( p \geq 2 \) such that
  \[
  \int_{\mathbb{R}} |x|^p\,\mathrm{d}\mu_t(x) < \infty \quad \text{for all } t = 0,\dots,N.
  \]
  \item[\bf A2.] In particular, all \(\mu_t\) have the same mean, \(\int x\,\mathrm d\mu_t(x)=s_0\).
\end{itemize}
The convex ordering assumption ensures the existence of martingale couplings, while the moment condition guarantees sufficient integrability for evaluating the cost function.
\end{assumption}

Let
\begin{equation}
\label{eq:martingale_measures}
\mathcal{M}(\mu_0,\dots,\mu_N):=\Big\{\pi\in\mathcal{P}(\Omega): (S_t)_{t=0}^N \text{ is a }\pi\text{-martingale and } \mathcal{L}_\pi(S_t)=\mu_t\ \forall t\Big\}
\end{equation}
define the set of coupling measures. By Kellerer’s theorem \cite{kellerer1972markov}, \(\mathcal{M}(\mu_0,\dots,\mu_N)\neq\varnothing\) under Assumption~\ref{ass:convex_order}. This construction essentially reflects the scenario of model uncertainty where only the marginal distributions of the asset price are known, and the hedger seeks robustness over all consistent models.

\subsection{Admissible Payoff}
\label{subsec:admissible_payoff}

The terminal payoff of a derivative contract is modeled as a measurable function $\Phi : \Omega \to \mathbb{R} \cup \{+\infty\}$, depending on the entire trajectory of the underlying asset. Examples to such contracts include path-dependent options such as lookbacks, Asian options, and barrier contracts.

To ensure the well-posedness of the robust pricing problem and the validity of the duality arguments, we impose the following regularity and integrability conditions on $\Phi$.

\begin{definition}[Admissible Payoff]
\label{def:admissible_payoff}
A function $\Phi : \Omega \to \mathbb{R} \cup \{+\infty\}$ is called admissible if:
\begin{enumerate}
    \item[(i)] For every \( \pi \in \mathcal{M}(\mu_0,\dots,\mu_N) \), the negative part \( \Phi^- := \max\{-\Phi, 0\} \) is integrable, i.e.,
    \[
    \mathbb{E}_\pi[\Phi^-] < \infty;
    \]
    \item[(ii)] \(\Phi\) is upper semicontinuous on \(\Omega\) and has at most \(p\)-order growth:
    \[
    \exists C>0:\quad \Phi(\omega)\le C\Big(1+\sum_{t=0}^N|\omega_t|^p\Big)\qquad \text{for all }\omega\in\Omega.
    \]
\end{enumerate}
\end{definition}

\subsection{Trading Strategies and Frictional Cost}
\label{subsec:trading_friction}

We formalize the dynamic component of a semi–static hedging strategy in the presence of execution costs. Let \( \Delta=(\Delta_0,\dots,\Delta_{N-1}) \) be a predictable process, where each \( \Delta_t:\R^{t+1}\to\R \) is \( \mathcal F_t \)-measurable and represents the number of shares held on the interval \([t,t{+}1]\). These discrete–time controls are the analogue of delta strategies determined by the observed price history up to time \(t\).

To model trading frictions, we introduce for each \(t=0,\dots,N{-}1\) a Borel function
\begin{equation}
\label{eq:friction_function}
f^{(a,b)}_t(x,\delta)\;:=\;a_t(x)\,|\delta|+b_t(x)\,\delta^2,\qquad (x,\delta)\in\R\times\R,
\end{equation}
where \(a_t,b_t:\R\to[0,\infty)\) are given. The map \(\delta\mapsto f^{(a,b)}_t(x,\delta)\) is convex, continuous, and satisfies \(f^{(a,b)}_t(x,0)=0\). The linear term \(a_t(x)|\delta|\) captures proportional costs such as bid–ask spreads or fees, while the quadratic term \(b_t(x)\delta^2\) models transient price impact and inventory penalization, in line with the execution literature \cite{AlmgrenChriss2001,ObizhaevaWang2013,GueantLehalle2015,AvellanedaStoikov2008}. When uniqueness or strict convexity in the one time–step selection problem is required, we impose \(0<\underline b\le b_t(x)\le\overline b<\infty\) and \(0\le a_t(x)\le\overline a\).

This specification differs from frameworks that treat frictions purely as proportional transaction costs. Classical models of proportional costs are well developed in the literature on consistent price systems and no–arbitrage under transaction costs, see for instance \cite{schachermayer2004fundamental,kabanov2009markets}. In the discrete–time robust hedging context, \cite{dolinsky2013duality,DolinskySoner2014} build on this framework by considering binomial–type approximations with proportional, state–independent costs, and prove duality results in that setting. Our formulation extends beyond these models by allowing both nonlinear impact and state dependence through \(x\mapsto a_t(x),b_t(x)\), thereby accommodating heterogeneous liquidity conditions and varying execution penalties across states while remaining compatible with the convex–analytic martingale optimal transport approach used throughout this paper.

Here we also introduce the assumptions on the hybrid friction function \eqref{eq:friction_function} which are essential to deduce the result in \cref{thm:strong_duality}. These properties ensure that the frictional penalty dominates any potential arbitrage through excessive trading, thereby providing regularity and compactness in the optimization problem \eqref{eq:superhedging_price_revised}.

\begin{assumption}
\label{ass:fric-assump}
For each $t=0,\dots,N-1$ that
\begin{equation}\label{eq:fric-assump}
\begin{aligned}
&\text{(i) } v\mapsto f^{(a,b)}_t(x,v)\ \text{is proper, convex, lower semicontinuous, and } f^{(a,b)}_t(x,0)=0,\\
&\text{(ii) superlinearity:}\quad \lim_{|v|\to\infty}\frac{f^{(a,b)}_t(x,v)}{|v|}=+\infty
\ \text{ uniformly for $x$ in compact sets},\\
&\text{(iii) coercive growth:}\quad 
f^{(a,b)}_t(x,v)\ \ge\ m\,|v|^p - c\,(1+|x|)\quad\text{for some }m>0,\ p>1,\ c<\infty.
\end{aligned}
\end{equation}
Then for any $\pi\in\mathcal M(\mu_0,\dots,\mu_N)$,
\[
\sum_{t=0}^{N-1}\E_\pi \big[f^{(a,b)}_t(S_t,S_{t+1}-S_t)\big]\ \ge\ 
m\sum_{t=0}^{N-1}\E_\pi \big[|S_{t+1}-S_t|^p\big]-C,
\]
so the objective is coercive in the increments and uniformly integrable along maximizing sequences.
\end{assumption}

\begin{lemma}[Conjugate of \eqref{eq:friction_function}]\label{lem:conjugate_state}
For each $(t,x)$ and $y\in\R$,
\begin{equation}\label{eq:conjugate_friction}
f^{(a,b)\,*}_t(x,y)
=\sup_{\delta\in\R}\big\{y\delta-f^{(a,b)}_t(x,\delta)\big\}
=
\begin{cases}
\displaystyle\frac{\big(|y|-a_t(x)\big)_+^2}{4\,b_t(x)}, & b_t(x)>0,\\[8pt]
\iota_{[-a_t(x),\,a_t(x)]}(y), & b_t(x)=0,
\end{cases}
\end{equation}
where $\iota_A(y):=0$ if $y\in A$ and $+\infty$ otherwise.
\end{lemma}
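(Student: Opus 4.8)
The plan is to compute the one–dimensional Legendre–Fenchel transform directly, treating $(t,x)$ as fixed parameters. Write $a:=a_t(x)\ge 0$ and $b:=b_t(x)\ge 0$, so the task reduces to evaluating $g(y):=\sup_{\delta\in\R}\{y\delta-a|\delta|-b\delta^2\}$. Since $\delta\mapsto a|\delta|+b\delta^2$ is convex, the objective is concave in $\delta$; the only non‑smooth point is the kink of $|\delta|$ at $\delta=0$, so I would split the supremum over the two half–lines $\{\delta\ge0\}$ and $\{\delta\le0\}$, maximize the resulting smooth (quadratic or affine) function on each, and take the larger value.

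On $\{\delta\ge0\}$ the objective is the concave quadratic $g_+(\delta)=(y-a)\delta-b\delta^2$. When $b>0$ its unconstrained maximizer is $\delta^\star=(y-a)/(2b)$, which lies in $[0,\infty)$ precisely when $y\ge a$, yielding value $(y-a)^2/(4b)$; otherwise the maximum over $[0,\infty)$ is attained at $\delta=0$, with value $0$. Symmetrically, on $\{\delta\le0\}$ one maximizes $g_-(\delta)=(y+a)\delta-b\delta^2$; the substitution $\delta=-\eta$, $\eta\ge0$, gives maximizer $\eta^\star=-(y+a)/(2b)$, admissible iff $y\le-a$, with value $(y+a)^2/(4b)=(|y|-a)^2/(4b)$, and value $0$ otherwise. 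The three regimes $y\ge a$, $y\le -a$, and $-a\le y\le a$ are exhaustive, and in each at least one half–line contributes $0$; hence $g(y)=(|y|-a)_+^2/(4b)$, which is the first branch. Finiteness and attainment of the sup for $b>0$ also follow abstractly from the superlinearity in Assumption~\ref{ass:fric-assump}(ii).

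For $b=0$ the objective $\delta\mapsto y\delta-a|\delta|$ is piecewise affine, with slope $y-a$ on $\{\delta>0\}$ and slope $y+a$ on $\{\delta<0\}$. It is bounded above — indeed equals $0$ at the maximizer $\delta=0$ — exactly when $y-a\le0$ and $y+a\ge0$, i.e. $|y|\le a$; if $|y|>a$ one of the two rays drives the objective to $+\infty$. Thus $f^{(a,b)\,*}_t(x,y)=\iota_{[-a_t(x),\,a_t(x)]}(y)$, the second branch. This is also the general fact that the conjugate of a positively one–homogeneous convex function is the indicator of its subdifferential at the origin, here $\partial_v f^{(a,b)}_t(x,0)=[-a_t(x),a_t(x)]$, a form we will reuse when describing the trade band.

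There is no genuine obstacle: the argument is an elementary split–and–optimize computation. The only points needing a moment's care are the kink at $\delta=0$ (handled by the half–line decomposition) and the boundary cases $y=\pm a$, where the interior and boundary values coincide. As a consistency check one may note that $(|y|-a)_+^2/(4b)\to\iota_{[-a,a]}(y)$ pointwise as $b\downarrow0$, so the two branches of \eqref{eq:conjugate_friction} match in the vanishing–impact limit.
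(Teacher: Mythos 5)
Your computation is correct and follows essentially the same route as the paper's proof: fix $(t,x)$, split the supremum over the two half–lines $\{\delta\ge 0\}$ and $\{\delta\le 0\}$, optimize the resulting concave quadratic (or affine) pieces, and take the larger value in each of the regimes $b>0$ and $b=0$. The closing remarks about the conjugate of a one–homogeneous function and the pointwise limit $(|y|-a)_+^2/(4b)\to\iota_{[-a,a]}(y)$ as $b\downarrow 0$ are correct and pleasant but not present in (nor needed by) the paper's argument.
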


\begin{proof}
Fix $t$ and $x\in\R$, and abbreviate $a:=a_t(x)\ge 0$ and $b:=b_t(x)\ge 0$. For $y\in\R$,
\[
f^{(a,b)\,*}_t(x,y)
=\sup_{\delta\in\R}\big\{y\delta-a|\delta|-b\delta^2\big\}.
\]

\emph{Case $b=0$.} Then $y\delta-a|\delta|\le (|y|-a)\,|\delta|$. If $|y|>a$, the right–hand side tends to $+\infty$ as $|\delta|\to\infty$, hence the supremum is $+\infty$. If $|y|\le a$, the expression is $\le 0$ for all $\delta$, with equality at $\delta=0$. Thus
\(
f^{(a,0)\,*}_t(x,y)=\iota_{[-a,a]}(y).
\)

\emph{Case $b>0$.} Consider $\phi(\delta):=y\delta-a|\delta|-b\delta^2$ and split the maximization over $\delta\ge 0$ and $\delta\le 0$.

For $\delta\ge 0$ we have $\phi(\delta)=(y-a)\delta-b\delta^2$, a concave quadratic on $[0,\infty)$ whose unconstrained maximizer is $\delta_+^\ast=(y-a)/(2b)$. Hence
\[
\sup_{\delta\ge 0}\phi(\delta)=
\begin{cases}
\dfrac{(y-a)^2}{4b}, & y\ge a,\\[6pt]
0, & y<a,
\end{cases}
\]
where the second line is attained at the boundary $\delta=0$.

For $\delta\le 0$ we have $|\delta|=-\delta$ and $\phi(\delta)=(y+a)\delta-b\delta^2$, a concave quadratic on $(-\infty,0]$ whose unconstrained maximizer is $\delta_-^\ast=(y+a)/(2b)$. Therefore
\[
\sup_{\delta\le 0}\phi(\delta)=
\begin{cases}
\dfrac{(y+a)^2}{4b}, & y\le -a,\\[6pt]
0, & y>-a,
\end{cases}
\]
again with the second line attained at $\delta=0$.

Taking the maximum of the two one–sided suprema yields
\[
f^{(a,b)\,*}_t(x,y)=
\begin{cases}
\dfrac{(y-a)^2}{4b}, & y\ge a,\\[6pt]
0, & |y|<a,\\[6pt]
\dfrac{(y+a)^2}{4b}, & y\le -a,
\end{cases}
\]
which can be written equivalently in the compact form
\(
f^{(a,b)\,*}_t(x,y)=\big(|y|-a\big)_+^2/(4b).
\)
\end{proof}

\subsection{Portfolio Value and Frictional Superhedging Price}
\label{subsec:portfolio_superhedging}

In an incomplete market with transaction costs, an investor may hedge a payoff $\Phi$ using a combination of static and dynamic trading strategies. The static part consists of positions in a set of European options written on the terminal asset price \( S_N \), while the dynamic part comprises of predictable rebalancing strategies based on the observed path of the underlying up to each trading date.

Let \( \lambda : \mathbb{R} \to \mathbb{R} \) be a measurable function and \( \lambda(S_N) \) be the European claim, then the initial cost of acquiring this static position is given by
\begin{equation}
\label{eq:static_cost_revised}
\nu(\lambda) := \int_{\mathbb{R}} \lambda(x)\,\mathrm{d}\mu_N(x).
\end{equation}

In addition to the static hedge, the investor deploys a dynamic trading strategy \( \Delta = (\Delta_0,\dots,\Delta_{N-1}) \) as described in Section~\ref{subsec:trading_friction}. The initial capital \( Y_0 \in \mathbb{R} \) is allocated between financing the static hedge and covering the dynamic trading path. The cumulative value of the strategy at terminal time \( N \) includes gains from trading, the payoff from the static option position, and the price of market friction. It is given by
\begin{equation}
\label{eq:terminal_wealth_revised}
Y_N^{\mathrm{fric}} = Y_0 + \sum_{t=0}^{N-1} \Delta_t(S_0,\dots,S_t)(S_{t+1} - S_t)  
+ \lambda(S_N) - \nu(\lambda) -\sum_{t=0}^{N-1} f^{(a,b)*}_t~\big(S_t,\Delta_t(S_0,\dots,S_t)\big).
\end{equation}

Here we make precise the modeling choice behind \eqref{eq:terminal_wealth_revised}. Let \(\pi\in\mathcal M(\mu_0,\dots,\mu_N)\) be a feasible martingale law on \((S_0,\dots,S_N)\), and denote by
\begin{equation}\label{eq:pi_t, V_t}
\pi_t\ :=\ (S_t,S_{t+1})_\#\pi\ \in\ \mathcal P(\R^2),\qquad V_t:=S_{t+1}-S_t .
\end{equation}
In the increment–penalty specification used in \eqref{eq:primal_problem_revised}, the model-side objective incorporates the friction cost through
\begin{equation}\label{eq:inc-penalty-primal}
\sum_{t=0}^{N-1}\E_\pi\big[f^{(a,b)}_t(S_t,V_t)\big]
\ =\ \sum_{t=0}^{N-1}\int_{\R^2} f^{(a,b)}_t(x,\,y-x)\,\mathrm d\pi_t(x,y) =: \mathcal{C}(\pi).
\end{equation}
On the hedger side, for a predictable slope process \(h=(h_t)_{t=0}^{N-1}\) with \(h_t=h_t(S_0,\dots,S_t)\), the Fenchel–Young inequality applied pointwise yields, for every \(t\),
\begin{equation}\label{eq:fenchel-young-pathwise}
-\,f^{(a,b)}_t\big(S_t,V_t\big)\ \le\ h_t(S_t)\,V_t\ -\ f^{(a,b)\,*}_t\big(S_t,\,h_t(S_t)\big),
\end{equation}
where \(f^{(a,b)\,*}_t\) is the convex conjugate in the second variable. Summing \eqref{eq:fenchel-young-pathwise} over \(t\) and taking expectation under any \(\pi\in\mathcal M(\mu_0,\dots,\mu_N)\) gives the dual superhedging inequality
\begin{equation}\label{eq:pathwise-dual-inequality}
\E_\pi \left[\Phi(S_0,\dots,S_N)\ -\ \sum_{t=0}^{N-1} f^{(a,b)}_t(S_t,V_t)\right]
\ \le\ 
\E_\pi \left[\ \Phi(S_0,\dots,S_N)\ +\ \sum_{t=0}^{N-1}\Big(h_t(S_t)\,V_t\ -\ f^{(a,b)\,*}_t\big(S_t,h_t(S_t)\big)\Big)\ \right],
\end{equation}
which is the algebraic form underlying \eqref{eq:terminal_wealth_revised} and the dual problem (the right-hand side represents terminal wealth of a predictable strategy with penalty \(f^{*}\) paid pathwise).

For comparison, an execution–cost specification charges directly for changes in the position process \(\Delta=(\Delta_t)_{t=0}^{N}\). Writing the trade size as \(\delta_t:=\Delta_{t+1}-\Delta_t\), the terminal wealth of a self-financing strategy with execution costs is
\begin{equation}\label{eq:exec-wealth}
X_N\ =\ X_0\ +\ \sum_{t=0}^{N-1}\Delta_t\,(S_{t+1}-S_t)\ -\ \sum_{t=0}^{N-1} f^{(a,b)}_t\big(S_t,\delta_t\big).
\end{equation}
In this formulation the dual constraint arises by maximizing \(\delta\mapsto h_t\,\delta - f^{(a,b)}_t(S_t,\delta)\), i.e.
\begin{equation}\label{eq:exec-dual-block}
\sup_{\delta\in\R}\,\{\,h_t\,\delta - f^{(a,b)}_t(S_t,\delta)\,\}\ =\ f^{(a,b)\,*}_t\big(S_t,h_t\big),
\end{equation}
so that the dual penalty involves the same conjugate \(f^{*}\) but now as the Legendre transform in the trading variable \(\delta\). We adopt the increment–penalty model \eqref{eq:inc-penalty-primal} because it couples directly to the martingale optimal transport objective \eqref{eq:primal_problem_revised} through \eqref{eq:fenchel-young-pathwise}–\eqref{eq:pathwise-dual-inequality}, yielding a clean pathwise dual representation.

\begin{definition}[Frictional Superhedging Price]
\label{def:frictional_superhedging}
Let $\Phi$ be the payoff function that satisfies \cref{def:admissible_payoff}. The frictional superhedging price of \( \Phi \) is defined as
\begin{equation}
\label{eq:superhedging_price_revised}
\mathsf{SH}(\Phi) := \inf \left\{ Y_0 \in \mathbb{R} : \exists\, \lambda,\, \Delta \text{ s.t. } 
Y_N^{\mathrm{fric}} \geq \Phi \quad \text{$\pi$-a.s. for all } \pi \in \mathcal{M}(\mu_0,\dots,\mu_N) \right\}.
\end{equation}
\end{definition}

Therefore, \eqref{eq:superhedging_price_revised} represents the minimal capital $Y_N^{\mathrm{fric}}$ required to hedge the claim \( \Phi \) in a model-independent way, while accounting for all possible price evolutions compatible with the prescribed marginals and respecting the martingale property. Here the inclusion of frictions ensures that the replication cost reflects real-world trading constraints and precludes over-optimistic hedging strategies that rely on idealized, costless markets.

\subsection{Primal problem}
\label{subsec:primal_transport}

The superhedging problem introduced in Definition~\ref{def:frictional_superhedging} admits an elegant reformulation as a penalized martingale optimal transport (MOT) problem.

Let \( \pi \in \mathcal{M}(\mu_0,\dots,\mu_N) \) denote a discrete-time martingale measure on \( \mathbb{R}^{N+1} \), and let \( \pi_t \), defined in \eqref{eq:pi_t, V_t}, be the marginal law of \( (S_t, S_{t+1}) \) under \( \pi \). The total trading cost associated with a given transport plan \( \pi \) is defined by $\mathcal{C}(\pi)$ as in \eqref{eq:inc-penalty-primal}. Consequently, in \eqref{eq:inc-penalty-primal}, each term $f^{(a,b)}_t(x, y - x)$ encodes the cost incurred when trading from a position at $x$ to $y$, and the integrals over $\pi_t$ aggregate these costs across all possible transitions allowed by the transport plan.

The primal formulation of the robust pricing problem then becomes
\begin{equation}
\label{eq:primal_problem_revised}
\mathbf{V} := \sup_{\pi \in \mathcal{M}(\mu_0,\dots,\mu_N)} \left\{ \mathbb{E}_\pi[\Phi] - \mathcal{C}(\pi) \right\}.
\end{equation}

This variational representation expresses the superhedging value as the maximal expected payoff over all martingale couplings, penalized by the cumulative cost of trading. The term $\mathbb{E}_\pi[\Phi]$ reflects the potential gain from a successful hedge under the worst-case path scenario, while $\mathcal{C}(\pi)$ imposes a penalty for incurring liquidity or execution costs.

Compared to the classical MOT framework studied in~\cite{BLP2013, DolinskySoner2014} which assumes frictionless markets, the inclusion of $ \mathcal{C}(\pi) $ introduces a regularization that penalizes rapid or volatile movements in the price path. This distinguishes our approach from the frictionless setting and aligns it more closely with the real-world trading environment. 

\subsection{Dual Formulation: From Portfolio Inequality to Strong Duality}
\label{subsec:dual_formulation}

We now derive the dual formulation of the superhedging problem under frictions. Recall the expression for the terminal wealth in \eqref{eq:terminal_wealth_revised} and the superhedging condition
\begin{equation}\label{eq:dual_superhedge_condition}
Y_N^{\mathrm{fric}}(\omega) \;\ge\; \Phi(\omega) \qquad \text{for all } \omega \in \Omega.
\end{equation}
Using the Fenchel--Young inequality, we note that for each friction function \( f^{(a,b)}_t \) and its Legendre--Fenchel conjugate \( f^{(a,b)*}_t \), it holds
\begin{equation}
f^{(a,b)}_t(x,\delta) + f^{(a,b)*}_t(x,y) \;\ge\; y \delta,
\end{equation}
with equality if and only if \( y \in \partial_\delta f^{(a,b)}_t(x,\delta) \). Applying this inequality pathwise with \(\delta=S_{t+1}-S_t\), we obtain:
\begin{equation}\label{eq:pathwise_dual_constraint}
\Delta_t(S_{t+1}-S_t)-f^{(a,b)*}_t(S_t,\Delta_t)\ \le\ f^{(a,b)}_t\big(S_t,S_{t+1}-S_t\big).
\end{equation}

We now encode the dynamic trading part via the frictional cost and the static position via a potential. Set \( u_N = \lambda \) and \( u_t \equiv 0 \) for \( t < N \), and define the pathwise dual functional
\begin{equation}\label{eq:Psi_dual_functional}
\Psi_{u,\Delta}(\omega):=\sum_{t=0}^N u_t\big(S_t(\omega)\big)
+\sum_{t=0}^{N-1}\Big[\Delta_t(\omega)\,(S_{t+1}-S_t)(\omega)-f^{(a,b)*}_t\big(S_t(\omega),\Delta_t(\omega)\big)\Big].
\end{equation}
Then the superhedging constraint becomes equivalent to
\begin{equation}
\label{eq:dual_inequality}
\Phi(\omega) \;\le\; \Psi_{u,\Delta}(\omega) + Y_0 - \nu(\lambda), \qquad \text{for all } \omega \in \Omega.
\end{equation}
Minimizing the initial capital \( Y_0 \) subject to this inequality corresponds to minimizing \( \sum_{t=0}^{N} \int u_t \, \mathrm{d}\mu_t \) over all admissible dual variables.

We thus define the dual domain:
\begin{equation}\label{eq:dual_admissible_set}
\mathcal{D}:=\Big\{(u,\Delta):\ u_t\in L^1(\mu_t),\ \Delta\ \text{predictable},\ \Phi\le \Psi_{u,\Delta}\ \text{pointwise on }\Omega\Big\}
\end{equation}
and the dual value is given by:
\begin{equation}\label{eq:dual_problem}
\mathbf{D} := \inf_{(u, \Delta) \in \mathcal{D}} \sum_{t=0}^N \int u_t \, \mathrm{d}\mu_t.
\end{equation}

\begin{theorem}[Strong Duality with State-Dependent Frictions]
\label{thm:strong_duality}
Suppose that the marginal constraints satisfy the convex ordering condition of Assumption~\ref{ass:convex_order}, the claim \( \Phi \) is admissible in the sense of Definition~\ref{def:admissible_payoff}, and the friction cost functions \( f^{(a,b)}_t \) satisfy the convexity and superlinearity assumptions. Then the primal and dual problems satisfy strong duality:
\begin{equation}\label{eq:strong_duality}
\mathbf{V} 
= \sup_{\pi \in \mathcal{M}(\mu_0,\dots,\mu_N)} 
\left\{ \mathbb{E}_\pi[\Phi] 
- \sum_{t=0}^{N-1} \int_{\mathbb{R}^2} f^{(a,b)}_t(x, y - x) \, \mathrm{d}\pi_t(x, y)
\right\}
= \mathbf{D}.
\end{equation}
Moreover, both extrema are attained.
\end{theorem}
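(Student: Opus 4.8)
The plan is to treat the statement in three stages: weak duality $\mathbf V\le\mathbf D$ by a pathwise Fenchel--Young bound; existence of a primal maximizer by weak compactness; and the no--gap inequality $\mathbf D\le\mathbf V$ together with dual attainment, obtained by reducing to frictionless MOT duality. For weak duality, fix $\pi\in\mathcal M(\mu_0,\dots,\mu_N)$ with $\mathcal C(\pi)<\infty$ and $(u,\Delta)\in\mathcal D$. The pointwise Fenchel--Young estimate \eqref{eq:pathwise_dual_constraint} gives $\Psi_{u,\Delta}(\omega)\le\sum_t u_t(\omega_t)+\sum_t f^{(a,b)}_t(\omega_t,\omega_{t+1}-\omega_t)$, so the superhedging constraint $\Phi\le\Psi_{u,\Delta}$ yields the $\Delta$--free bound $\Phi(\omega)-\sum_t f^{(a,b)}_t(\omega_t,\omega_{t+1}-\omega_t)\le\sum_t u_t(\omega_t)$; integrating against $\pi$ and using $\mathcal L_\pi(S_t)=\mu_t$ gives $\mathbb E_\pi[\Phi]-\mathcal C(\pi)\le\sum_t\int u_t\,\mathrm d\mu_t$, and since couplings of infinite cost contribute $-\infty$ on the left, taking suprema and infima yields $\mathbf V\le\mathbf D$.

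For primal attainment, $\mathcal M(\mu_0,\dots,\mu_N)$ is nonempty by Kellerer's theorem and weakly compact (tightness from the tightness of each fixed $\mu_t$, closedness under weak limits from the continuity of the marginal and martingale constraints). On this set $\pi\mapsto\mathbb E_\pi[\Phi]$ is upper semicontinuous, using the upper semicontinuity and $p$--growth of $\Phi$ together with the uniform $p$--th moment bounds of Assumption~\ref{ass:convex_order} and the coercivity estimate of Assumption~\ref{ass:fric-assump}, while $\pi\mapsto\mathcal C(\pi)=\sum_t\int f^{(a,b)}_t(x,y-x)\,\mathrm d\pi_t$ is lower semicontinuous as the integral of a nonnegative lower semicontinuous integrand. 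Hence $\pi\mapsto\mathbb E_\pi[\Phi]-\mathcal C(\pi)$ is u.s.c. on a weakly compact set and attains its supremum $\mathbf V$ at some $\pi^\star$.

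For the no--gap part, the device is to recast the primal as a \emph{frictionless} martingale transport problem for the modified payoff
\[
\widetilde\Phi(\omega):=\Phi(\omega)-\sum_{t=0}^{N-1}f^{(a,b)}_t(\omega_t,\omega_{t+1}-\omega_t),\qquad\text{so that}\qquad\mathbf V=\sup_{\pi\in\mathcal M(\mu_0,\dots,\mu_N)}\mathbb E_\pi[\widetilde\Phi].
\]
Under Assumptions~\ref{ass:convex_order} and~\ref{ass:fric-assump} the payoff $\widetilde\Phi$ is upper semicontinuous, bounded above by the $p$--growth bound of $\Phi$, and has $\pi$--integrable negative part for every finite--cost $\pi$, hence is admissible. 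Discrete--time frictionless MOT duality with dual attainment \cite{BLP2013,DolinskySoner2014,BeiglbockNutzTouzi2017}, propagated across the $N$ steps by the dynamic programming scheme of Section~\ref{sec:multi_marginal}, then furnishes $u_t\in L^1(\mu_t)$ and a predictable slope $h=(h_t)_{t=0}^{N-1}$ with
\[
\sum_{t=0}^N u_t(\omega_t)+\sum_{t=0}^{N-1}h_t(\omega_0,\dots,\omega_t)(\omega_{t+1}-\omega_t)\ \ge\ \widetilde\Phi(\omega)\quad\text{for all }\omega,\qquad\sum_{t=0}^N\int u_t\,\mathrm d\mu_t=\mathbf V.
\]
Substituting the definition of $\widetilde\Phi$ and invoking the convexity, superlinearity and normalization $f^{(a,b)}_t(x,0)=0$ together with the explicit conjugate of Lemma~\ref{lem:conjugate_state}, one converts the frictionless certificate $(u,h)$ into a frictional certificate $(u,\Delta)\in\mathcal D$ of the same value $\sum_t\int u_t\,\mathrm d\mu_t$, identifying the right-hand side above with the superhedging functional $\Psi_{u,\Delta}$ of \eqref{eq:Psi_dual_functional}; this yields $\mathbf D\le\mathbf V$, attainment of the infimum defining $\mathbf D$, and, with weak duality, $\mathbf V=\mathbf D$ with both extrema attained.

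The hard part will be the last paragraph. Two points require real work: (a) frictionless MOT duality \emph{with dual attainment} on the non-compact space $\mathbb R^{N+1}$ in the multi--marginal regime, which relies on the coercivity of $\widetilde\Phi$ inherited from Assumption~\ref{ass:fric-assump}, a compactification/limiting argument in the spirit of \cite{DolinskySoner2014}, and the timewise stability established in Section~\ref{sec:multi_marginal}; and (b) the passage from the dynamic-slope certificate $(u,h)$ to the conjugate-penalty certificate $(u,\Delta)$, i.e. showing the two dual formulations have the same value without loss in $\sum_t\int u_t\,\mathrm d\mu_t$, which is precisely where the Legendre structure of $f^{(a,b)}_t$ must be used in full.
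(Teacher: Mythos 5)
Your weak-duality argument and your compactness argument for primal attainment are both correct and match the paper's reasoning (Fenchel--Young pathwise, Prokhorov plus closedness of martingale couplings, u.s.c.\ of the objective). The no--gap paragraph, however, has a genuine gap that the paper avoids by taking a different route, and the issue is precisely at your step (b).

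You pass to the modified payoff $\widetilde\Phi=\Phi-\sum_t f^{(a,b)}_t(\omega_t,\omega_{t+1}-\omega_t)$ and invoke \emph{frictionless} MOT duality. That gives $(u,h)$ with
\[
\Phi(\omega)\ \le\ \sum_t u_t(\omega_t)\ +\ \sum_t\Big[h_t(\omega)\,V_t\ +\ f^{(a,b)}_t(\omega_t,V_t)\Big],
\qquad V_t:=\omega_{t+1}-\omega_t .
\]
You then want to deduce a certificate for $\mathbf D$, which requires
\[
\Phi(\omega)\ \le\ \Psi_{u,\Delta}(\omega)\ =\ \sum_t u_t(\omega_t)\ +\ \sum_t\Big[\Delta_t(\omega)\,V_t\ -\ f^{(a,b)*}_t(\omega_t,\Delta_t(\omega))\Big].
\]
But for the \emph{same} potentials $u$ and any predictable $\Delta_t$, the pointwise difference of the two right-hand sides is
\[
\sum_t\Big[h_t V_t + f^{(a,b)}_t(\omega_t,V_t)\Big]\ -\ \sum_t\Big[\Delta_t V_t - f^{(a,b)*}_t(\omega_t,\Delta_t)\Big]
\ =\ \sum_t\Big[f^{(a,b)}_t(\omega_t,V_t)+f^{(a,b)*}_t(\omega_t,\Delta_t)-(\Delta_t-h_t)V_t\Big],
\]
which in the natural choice $\Delta_t=h_t$ reduces to $\sum_t\big[f^{(a,b)}_t(\omega_t,V_t)+f^{(a,b)*}_t(\omega_t,h_t)\big]\ge0$, with strict inequality whenever $V_t\neq0$ or $h_t\notin\partial_v f^{(a,b)}_t(\omega_t,0)$. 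So the frictional constraint $\Phi\le\Psi_{u,\Delta}$ is \emph{strictly stronger} pointwise than the frictionless-for-$\widetilde\Phi$ constraint, and the frictional feasible set is \emph{contained in} the frictionless one. The frictionless certificate therefore does not yield a frictional certificate, and the inclusion actually gives the inequality in the wrong direction, $\mathbf D\ge\mathbf D_{\text{frictionless}}(\widetilde\Phi)=\mathbf V$, which is just weak duality again. There is also no predictable $\Delta_t$ that can make the slack vanish or be absorbed into $u_t$: Fenchel--Young is tight only when $\Delta_t\in\partial_v f^{(a,b)}_t(\omega_t,V_t)$, which depends on $\omega_{t+1}$, while $\Delta_t$ must be $\mathcal F_t$-measurable, and the superlinear growth of $f^{(a,b)}_t$ in $V_t$ rules out dominating the residual by any function of $\omega_t$ alone. ``Using the Legendre structure in full'' does not close this; the reduction to frictionless MOT duality for $\widetilde\Phi$ is not the right route to $\mathbf D\le\mathbf V$.

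The paper instead proves the no-gap direction by a Fenchel--Rockafellar duality \emph{at each time step} with cost $c_t(x,y)=f^{(a,b)}_t(x,y-x)$ and constraint $\phi_t(x)+\psi_t(y)+h_t(x)(y-x)\le f^{(a,b)}_t(x,y-x)$, combined with the dual shift of Lemma~\ref{lem:dual_shift_KKT} and telescoping of the continuation values; dual attainment comes from a Koml\'os/normalization argument. If you want to salvage a ``reduce to known duality'' route, you would need to reduce to a duality whose certificate already carries the conjugate penalty structure, not to the plain frictionless MOT inequality, and that is essentially reproving the one-step Fenchel--Rockafellar step.
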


See section~\ref{sec:proof_strong_duality} for the proof.

\begin{remark}[Subhedging with frictions]
\label{rem:subhedging}
In parallel with the superhedging problem, one may consider the \emph{subhedging} value of a contingent claim~$\Phi$, i.e.\ the maximal guaranteed initial capital that can be raised by selling $\Phi$ while trading dynamically under frictions. Formally, the subhedging problem is
\begin{equation}\label{eq:subhedging_primal}
\mathbf{V}_{\mathrm{sub}}
:=\inf_{\pi\in\mathcal M(\mu_0,\dots,\mu_N)}
\left\{ \E_\pi[\Phi]
+\sum_{t=0}^{N-1}\int_{\R^2} f^{(a,b)}_t(x,y-x)\,\mathrm d\pi_t(x,y)\right\}.
\end{equation}
This is the natural counterpart to \eqref{eq:primal_problem_revised}, with the frictional penalty added rather than subtracted. For the dual problem, the subhedging problem can be expressed as
\begin{equation}\label{eq:subhedging_dual}
\mathbf{D}_{\mathrm{sub}}
:=\sup_{(u,\Delta)}\ \sum_{t=0}^N\int u_t\,\mathrm d\mu_t,
\end{equation}
where the supremum is taken over pairs $(u,\Delta)$ such that
\[
\Phi\;\ge\;\Psi_{u,\Delta}\quad\text{pointwise on }\Omega,
\]
with $\Psi_{u,\Delta}$ defined in \eqref{eq:Psi_dual_functional}. As in the superhedging case, convexity and superlinearity of the cost functions $f^{(a,b)}_t$ ensure strong duality $\mathbf V_{\mathrm{sub}}=\mathbf D_{\mathrm{sub}}$, and both extrema are attained.
\end{remark}

\section{The one time–step geometry under friction}
\label{sec:fric_geometry}

Here we will develop the structure of the optimal martingale coupling for the marginals at time $t$ and $t+1$ in the presence of frictions \cite{BeiglbockLabordereTouzi2017, BJ2016, HLT2016}. The central tool is an \emph{adjusted} cost function that combines the continuation value of the payoff with the trading friction. 

Fix $t\in\{0,\dots,N-1\}$ and abbreviate $\mu:=\mu_t$, $\eta:=\mu_{t+1}$. Denote by $X(x,y):=x$ and $Y(x,y):=y$ the coordinate projections and set
\[
\Pi^M(\mu,\eta)
:=\Bigl\{\pi\in\mathcal{P}(\mathbb{R}^2):\ \pi\circ X^{-1}=\mu,\ \pi\circ Y^{-1}=\eta,\ \mathbb{E}_\pi[Y\mid X]=X\Bigr\}.
\]
Under \cref{ass:convex_order}, the continuation value $\mathsf V:\R\to\R$ at time $t{+}1$ is defined as
\begin{equation}
\label{eq:continuation_value_two_marg}
\mathsf V(y)
:= \sup_{\pi'\in\mathcal{M}(\mu_{t+1},\dots,\mu_N)}
\mathbb{E}_{\pi'} \left[\Phi(S_0,\dots,S_N)\ \Big|\ S_{t+1}=y\right],\qquad y\in\R,
\end{equation}
which depends only on the next marginal $\mu_{t+1}$ and the future marginals
$(\mu_{t+2},\dots,\mu_N)$.

The adjusted one time–step cost function is defined as
\begin{equation}
\label{eq:adjusted_cost_repeat}
\widetilde c_t(x,y):=\mathsf V(y)-\mathsf V(x)-f^{(a,b)}_t \big(x,\,y-x\big),
\qquad (x,y)\in\R^2,
\end{equation}
and consider
\begin{equation}
\label{eq:onestep_objective_adjusted}
\sup_{\pi\in\Pi^M(\mu,\eta)} \int_{\R^2} \widetilde c_t\,d\pi
\;=\; \eta(\mathsf V)-\mu(\mathsf V)\;-\;\inf_{\pi\in\Pi^M(\mu,\eta)} \int_{\R^2} f^{(a,b)}_t \big(x,\,y-x\big)\,d\pi.
\end{equation}
Thus the term $\mathsf V(y)-\mathsf V(x)$ contributes an additive constant
$\eta(\mathsf V)-\mu(\mathsf V)$ independent of $\pi$, and the argmax of the
adjusted problem coincides with the minimizers of the pure–friction functional
$\pi\mapsto\int f^{(a,b)}_t(x,y-x)\,d\pi$.

\subsection{Martingale Spence–Mirrlees (MSM) condition}
\label{subsec:fric_MSM}

The one time–step problem relies on a variational comparison over local martingale
competitors and a curvature condition on \(\widetilde c_t\).
This is the one–dimensional monotonicity mechanism of
\cite{HLT2016,BeiglbockLabordereTouzi2017}, adapted to the frictional setting, where the monotonicity is obtained by testing optimality against elementary rectangle
perturbations among martingale competitors.

\begin{definition}[Martingale competitors]
\label{def:competitors}
Let \(\alpha\) be a finite Borel measure on \(\R^2\) with finite first moments and
\(x\)–marginal \(\mu_\alpha:=\alpha\circ X^{-1}\).
A finite Borel measure \(\alpha'\) on \(\R^2\) is a martingale competitor of \(\alpha\) if
\[
\alpha'\circ X^{-1}=\mu_\alpha,\qquad \alpha'\circ Y^{-1}=\alpha\circ Y^{-1},
\]
and, for \(\mu_\alpha\)–a.e.\ \(x\),
\[
\int y\,\alpha(\mathrm dy\mid x)=\int y\,\alpha'(\mathrm dy\mid x).
\]
Equivalently, \(\E_{\alpha'}[Y\mid X]=\E_{\alpha}[Y\mid X]\) holds \(\mu_\alpha\)–a.e.
\end{definition}

\begin{definition}[Rectangle MSM inequality]
\label{def:rectangle_MSM}
We say that \(\widetilde{c}_t\) satisfies the martingale Spence–Mirrlees condition if,
for all \(x<x'\) and \(y^-<y^+\),
\begin{equation}
\label{eq:rectangle_MSM}
\Delta^{\square}\widetilde{c}_t\big((x,x');(y^-,y^+)\big)
:=\widetilde{c}_t(x,y^-)+\widetilde{c}_t(x',y^+)-\widetilde{c}_t(x,y^+)-\widetilde{c}_t(x',y^-)\ >\ 0.
\end{equation}
When \(\widetilde{c}_t\in C^3\), a sufficient pointwise criterion is
\begin{equation}
\label{eq:MSM_smooth}
\partial_{xyy}\widetilde{c}_t(x,y)\ \ge\ \kappa_t\ >\ 0\qquad \text{for all }(x,y)\in\R^2,
\end{equation}
see \cite[Sec.~3]{HLT2016}, \cite{BJ2016} and \cite[Thm.~5.2]{BeiglbockNutzTouzi2017}.
\end{definition}

Recall that
\[
\widetilde{c}_t(x,y)
=\mathsf{V}_{t+1}(y)-\mathsf{V}_{t+1}(x)-f^{(a,b)}_t(x,y-x),
\]
and write \(v:=y-x\). The separable part \(\mathsf V_{t+1}(y)-\mathsf V_{t+1}(x)\) has zero rectangle increment, and differentiating yields
\[
\partial_{xyy}\widetilde c_t(x,y)
=\underbrace{0}_{\text{from }\mathsf V_{t+1}(y)-\mathsf V_{t+1}(x)}
\;+\; (f^{(a,b)}_t)_{vvv}(x,v)\;-\;(f^{(a,b)}_t)_{xvv}(x,v).
\]
Thus the frictional MSM requirement (\eqref{eq:rectangle_MSM} or \eqref{eq:MSM_smooth}) amounts to a positivity condition on the \(v\)–curvature of \(f^{(a,b)}_t\) relative to its \(x\)–dependence. In the \(x\)–independent linear–quadratic model \(f_{\alpha,\beta}(v)=\alpha|v|+\beta v^2\) with \(\beta>0\), the smooth curvature vanishes away from \(v=0\), yet the rectangle inequality still enforces left–monotonicity; uniqueness may fail when the linear part is active (see the forthcoming \cref{rem:degenerate_MSM}).

To encode the marginal and martingale constraints at time \(t\) we use one–dimensional quantile functions which are defined as
\begin{equation}
\label{eq:F_mu_eta_def}
F_\mu(k):=\mu((-\infty,k]),\qquad F_\eta(k):=\eta((-\infty,k]),\qquad k\in\R,
\end{equation}
and the call–type potential of a measure \(\tau\) by
\begin{equation}
\label{eq:call_potential}
\mathcal C_\tau(k):=\int_{\R}(x-k)^+\,\mathrm d\tau(x),\qquad k\in\R.
\end{equation}
If \(\mu\preceq_{\mathrm{cx}}\eta\), then \(\int x\,\mathrm d\mu=\int y\,\mathrm d\eta\) and \(\mathcal C_\eta\ge \mathcal C_\mu\). Set
\begin{equation}
\label{eq:deltaF_def}
\Delta F_t(k):=\mathcal C_\eta(k)-\mathcal C_\mu(k)\ (\ge 0),
\end{equation}
which is convex with \(\lim_{k\to\pm\infty}\Delta F_t(k)=0\). In the sense of distributions,
\begin{equation}
\label{eq:deltaF_derivative}
\frac{\mathrm d}{\mathrm dk}\Delta F_t(k)=F_\eta(k)-F_\mu(k),
\qquad
\frac{\mathrm d^2}{\mathrm dk^2}\Delta F_t=\mathrm d\big(F_\eta-F_\mu\big).
\end{equation}

\begin{definition}[Irreducible components of $\Delta F_t$]
\label{def:irreducible_components}
Let $\mu\preceq_{\mathrm{cx}}\eta$ and $\Delta F_t:=\mathcal C_\eta-\mathcal C_\mu$ as in \eqref{eq:deltaF_def}.  
An irreducible component of $\{\Delta F_t>0\}$ is a maximal open interval 
$I\subset\R$ such that 
\[
\Delta F_t(k) > 0 \qquad \text{for all } k\in I.
\]
Equivalently, the irreducible components are the connected components of the open set 
$\{k\in\R:\Delta F_t(k)>0\}$.  

Since $\Delta F_t$ is convex, the set $\{\Delta F_t>0\}$ is open and decomposes uniquely as a countable disjoint union of such intervals. On each component $I=(a,b)$ one has 
\[
\Delta F_t(a)=\Delta F_t(b)=0,
\]
with one–sided limits understood at finite endpoints. Transport between $\mu$ and $\eta$ is necessarily nontrivial on $I$, whereas on $\{\Delta F_t=0\}$ the identity coupling is feasible, see \cite[Prop.~2.3]{BJ2016}.
\end{definition}



\begin{theorem}[Frictional left–curtain geometry and bi–atomic kernel]
\label{thm:fric_monotone}
Let $t\in\{0,\dots,N-1\}$ and set $\mu:=\mu_t$, $\eta:=\mu_{t+1}$. Assume that $\mu\preceq_{\mathrm{cx}}\eta$ and that the adjusted cost $\widetilde c_t$ in \eqref{eq:adjusted_cost_repeat} satisfies the rectangle martingale Spence–Mirrlees condition \eqref{eq:rectangle_MSM} (or equivalently \eqref{eq:MSM_smooth} in the smooth case). Then any optimizer $\pi_t^\star\in\Pi^M(\mu,\eta)$ of $\int \widetilde c_t\,\mathrm d\pi$ has the following structure:
\begin{enumerate}
\item There exists a Borel set $\Gamma\subset\R^2$ with $\pi_t^\star(\Gamma)=1$ such that $\Gamma$ is left–monotone: if $(x,y_-),(x,y_+),(x',y')\in\Gamma$ with $x<x'$ and $y_-<y_+$, then necessarily $y'\notin(y_-,y_+)$.
  
\item On every irreducible component $I$ of $\{\Delta F_t>0\}$ such that $\mu$ is atomless, there exist Borel maps
\[
T_d^{(t)}\le \mathrm{Id}\le T_u^{(t)}\quad \text{on }I,
\qquad T_d^{(t)}\ \text{nonincreasing},\quad T_u^{(t)}\ \text{nondecreasing},
\]
with the property that for $\mu$–a.e.\ $x\in I$,
\begin{equation}
\label{eq:selection_system}
\pi_t^\star(\mathrm dx,\mathrm dy)
=\mu(\mathrm dx)\,\pi^\star(\mathrm dy\mid x),\qquad
\pi^\star(\mathrm dy\mid x)
=\theta_t(x)\,\delta_{T_u^{(t)}(x)}(\mathrm dy)+(1-\theta_t(x))\,\delta_{T_d^{(t)}(x)}(\mathrm dy),
\end{equation}
where
\[
\theta_t(x)=\frac{x-T_d^{(t)}(x)}{T_u^{(t)}(x)-T_d^{(t)}(x)},
\]
and with the convention that on $\{T_u^{(t)}=T_d^{(t)}\}$ the kernel reduces to $\delta_x$ and $\theta_t(x)$ can be chosen arbitrarily in $[0,1]$ so that the barycenter identity holds.

\item For every bounded Borel function $\varphi:\R\to\R$,
\begin{equation}
\label{eq:stieltjes_test_identity}
\int_{\R}\varphi(y)\,\mathrm dF_\eta(y)
=\int_{\R}\Big[\theta_t(x)\,\varphi \big(T_u^{(t)}(x)\big)
+(1-\theta_t(x))\,\varphi \big(T_d^{(t)}(x)\big)\Big]\,\mathrm dF_\mu(x).
\end{equation}
Equivalently, since $\mathrm d(\Delta F_t)=\mathrm dF_\eta-\mathrm dF_\mu$,
\begin{equation}
\label{eq:stieltjes_test_identity_delta}
\int_{\R}\varphi(k)\,\mathrm d(\Delta F_t)(k)
=\int_{\R}\Big[\theta_t(x)\,\varphi \big(T_u^{(t)}(x)\big)
+(1-\theta_t(x))\,\varphi \big(T_d^{(t)}(x)\big)-\varphi(x)\Big]\,\mathrm dF_\mu(x).
\end{equation}
In particular, taking indicators $\varphi=\mathbf 1_{(-\infty,\,\cdot\,]}$ and exploiting right–continuity of $T_d^{(t)}$ and $T_u^{(t)}$, one obtains for $\mu$–a.e.\ $x\in I$ the coupling identity
\begin{equation}
\label{eq:coupling_identity_fric}
F_{\eta} \left(T_u^{(t)}(x)\right)
=F_\mu(x)+\Delta F_t \left(T_d^{(t)}(x)\right).
\end{equation}

\item For $\mu$–a.e.\ $x\in I$ there exists $\alpha_t(x)\in\R$ such that
\begin{equation}
\label{eq:FOC_equal_slopes}
\partial_y \widetilde{c}_t \left(x,\,T_d^{(t)}(x)\right)
=\partial_y \widetilde{c}_t \left(x,\,T_u^{(t)}(x)\right)=\alpha_t(x),
\end{equation}
where the derivatives are understood in the sense of subgradients if $v\mapsto f^{(a,b)}_t(x,v)$ is only convex. Equivalently, using \eqref{eq:adjusted_cost_repeat},
\begin{equation}
\label{eq:FOC_expanded}
\partial_y \mathsf{V}_{t+1} \left(T_d^{(t)}(x)\right)
-\partial_v f^{(a,b)}_t \left(x,\,T_d^{(t)}(x)-x\right)
=\partial_y \mathsf{V}_{t+1} \left(T_u^{(t)}(x)\right)
-\partial_v f^{(a,b)}_t \left(x,\,T_u^{(t)}(x)-x\right).
\end{equation}
\end{enumerate}

Together with the mass conservation identities
\begin{equation}
\label{eq:mass_conservation_fric}
\mathrm dF_{\eta} \bigl(T_u^{(t)}(x)\bigr)=\theta_t(x)\,\mathrm dF_\mu(x),
\qquad
\mathrm d\big(\Delta F_t\big) \bigl(T_d^{(t)}(x)\bigr)=-(1-\theta_t(x))\,\mathrm dF_\mu(x),
\end{equation}
and \eqref{eq:coupling_identity_fric}, these relations uniquely determine the endpoints $\bigl(T_d^{(t)}(x),T_u^{(t)}(x)\bigr)$ on $I$. 

If in addition $v\mapsto f^{(a,b)}_t(x,v)$ is strictly convex for every $x$ (uniformly on compact sets), then the endpoints are unique $\mu$–a.e.\ on each irreducible component where $\mu$ is atomless. In particular, the optimizer $\pi_t^\star$ is then unique.
\end{theorem}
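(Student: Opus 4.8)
The plan is to follow the monotonicity-principle route of \cite{BJ2016,HLT2016,BeiglbockNutzTouzi2017}, with the friction folded into the adjusted cost $\widetilde c_t$. First I would record existence of a maximiser $\pi_t^\star$ of $\pi\mapsto\int\widetilde c_t\,\mathrm d\pi$ over $\Pi^M(\mu,\eta)$ --- which follows from the one-step case of \cref{thm:strong_duality} (both extrema attained), or directly from weak compactness of $\Pi^M(\mu,\eta)$ together with the upper bound and the regularity of $\widetilde c_t$ inherited from admissibility of $\Phi$ and \cref{ass:fric-assump}; the one-step dual optimiser is also used below. Then I would invoke the martingale optimal transport monotonicity principle (cf.\ \cite{BeiglbockNutzTouzi2017}) to obtain a Borel set $\Gamma$ with $\pi_t^\star(\Gamma)=1$ on which no finitely supported martingale competitor --- one preserving the $x$-marginal and all conditional barycenters --- strictly increases $\int\widetilde c_t$. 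To deduce part (1) I would argue by contradiction: given $(x,y_-),(x,y_+),(x',y')\in\Gamma$ with $x<x'$ and $y_-<y'<y_+$, I would write the three-point swap competitor (split the $y'$-atom over $x'$ into atoms at $y_\pm$ with barycenter-preserving weights, and simultaneously build a $y'$-atom over $x$ out of its $y_\pm$ atoms), verify it is a legitimate martingale competitor, and compute its value increment as $G(y')-\mathrm{chord}_G(y')$ with $G(y):=\widetilde c_t(x,y)-\widetilde c_t(x',y)$; the MSM condition \eqref{eq:rectangle_MSM}/\eqref{eq:MSM_smooth} --- a positivity requirement on the $v$-curvature of $f^{(a,b)}_t$ relative to its $x$-dependence --- makes $G$ strictly concave on $[y_-,y_+]$, so the increment is strictly positive, contradicting optimality. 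Hence $\Gamma$ is left-monotone.

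For parts (2)--(3) I would use the standard structural consequences of left-monotonicity. A left-monotone martingale coupling with atomless first marginal has, for $\mu$-a.e.\ $x$, a conditional law carried by at most two points $T_d^{(t)}(x)\le x\le T_u^{(t)}(x)$, with $T_d^{(t)}$ nonincreasing and $T_u^{(t)}$ nondecreasing and the two branches nested as $x$ increases; restricting to an irreducible component $I$ of $\{\Delta F_t>0\}$ --- on $\{\Delta F_t=0\}$ the only martingale coupling is the identity, so nothing moves there (\cite[Prop.~2.3]{BJ2016}) --- gives \eqref{eq:selection_system} with $\theta_t(x)$ forced by the martingale identity. The identities in (3) are then bookkeeping: pushing $F_\mu$ through the two monotone branches and matching $F_\eta$ gives \eqref{eq:stieltjes_test_identity}; subtracting $\int\varphi\,\mathrm dF_\mu$ and using $\mathrm d(\Delta F_t)=\mathrm dF_\eta-\mathrm dF_\mu$ gives \eqref{eq:stieltjes_test_identity_delta}; indicator test functions together with right-continuity of the branches give \eqref{eq:coupling_identity_fric}; differentiation gives \eqref{eq:mass_conservation_fric}. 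To see that these relations determine $(T_d^{(t)},T_u^{(t)})$ on $I$ I would invoke the shadow construction of \cite{BJ2016}: the left-monotone martingale coupling is unique --- it maps $\mu|_{(-\infty,x]}$ onto its shadow in $\eta$ --- so its branches are the unique monotone solution of the coupling/mass-balance system.

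Part (4), the equal-slope condition, is the step I expect to be the main obstacle. The cleanest rigorous route, which I would take, is to invoke the one-step \cref{thm:strong_duality}: it produces a dual optimiser, and complementary slackness forces $T_d^{(t)}(x)$ and $T_u^{(t)}(x)$ to be joint maximisers of $y\mapsto\widetilde c_t(x,y)-u_{t+1}(y)-h_t(x)\,y$, whence \eqref{eq:FOC_equal_slopes} --- after expansion through \eqref{eq:adjusted_cost_repeat}, the balance \eqref{eq:FOC_expanded} between the marginal continuation value $\partial_y\mathsf V_{t+1}$ and the marginal friction $\partial_v f^{(a,b)}_t$ at the two endpoints, with $\alpha_t(x)$ the common value --- follows by first-order (subdifferential) calculus at the two atoms. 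Alternatively one can derive it directly, as stationarity of $\pi_t^\star$ under a barycenter-preserving fiber perturbation (shift the up-atom by $+s$, the down-atom by $-s\,\theta_t(x)/(1-\theta_t(x))$, and absorb the induced perturbation of $\eta$ on neighbouring fibers via local invertibility of the monotone branches), letting $s\to0$. The real work is the nonsmooth case: when $f^{(a,b)}_t(x,\cdot)$ has a kink at $v=0$ and a branch sits exactly on the diagonal, derivatives become subdifferentials throughout --- \eqref{eq:FOC_equal_slopes} then reads $\alpha_t(x)\in\partial_y\widetilde c_t(x,T_d^{(t)}(x))\cap\partial_y\widetilde c_t(x,T_u^{(t)}(x))$ and degenerates on $\{T_u^{(t)}=T_d^{(t)}=\mathrm{Id}\}$ to the band condition $h_t(x)\in\partial_v f^{(a,b)}_t(x,0)$ --- and the direct argument additionally needs a measurable selection of the compensating fibers and uniform control of the second-order remainder.

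Finally, for the uniqueness clause I would argue as follows. By part (1) every optimiser is concentrated on a left-monotone set, and there is a unique martingale coupling of $(\mu,\eta)$ with left-monotone support (\cite{BJ2016}), so $\pi_t^\star$ coincides with this (friction-selected) left-curtain coupling and is unique; strict convexity of $v\mapsto f^{(a,b)}_t(x,v)$, uniform on compact sets, is the concrete condition on the data that makes \eqref{eq:rectangle_MSM} hold with the strict sign needed for this argument --- equivalently, it makes the contact set of $\widetilde c_t(x,\cdot)$ with its concave majorant a genuine two-point set --- so the branches $(T_d^{(t)},T_u^{(t)})$ are $\mu$-a.e.\ single-valued on every irreducible component where $\mu$ is atomless. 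As an independent check I would use that the objective is affine in $\pi$: if $\pi^1\neq\pi^2$ were both optimal, then so is $\bar\pi=\tfrac12(\pi^1+\pi^2)$, which by parts (1)--(2) is $\mu$-a.e.\ bi-atomic; since each $\pi^i(\cdot\mid x)$ is itself bi-atomic and $\bar\pi(\cdot\mid x)$ carries the atoms of both, at most two atoms per fiber forces $\pi^1(\cdot\mid x)=\pi^2(\cdot\mid x)$ for $\mu$-a.e.\ $x$ (the weights being pinned by the barycenter), hence $\pi^1=\pi^2$.
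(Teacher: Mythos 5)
Your proposal is correct and follows the paper's architecture closely: a monotonicity principle giving a left-monotone support, bi-atomic disintegration via the at-most-two-atoms argument, bookkeeping for the CDF/shadow identities, and a dual/complementary-slackness derivation of the equal-slope condition. One place where you are actually cleaner than the paper is Part 1. The paper invokes a ``four-point rectangle swap'' and equates the value difference to $m\,\Delta^{\square}\widetilde c_t$, but the literal rectangle difference $\Delta^{\square}$ in Definition~\ref{def:rectangle_MSM} is a first-order increment (it measures $\partial_{xy}$, i.e.\ supermodularity) and does not match the stated sufficient criterion $\partial_{xyy}\widetilde c_t>0$; a naive two-atoms-per-$x$ swap also does not preserve conditional barycenters. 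Your three-point swap, with $G(y):=\widetilde c_t(x,y)-\widetilde c_t(x',y)$ strictly concave under $\partial_{xyy}>0$ and the increment being $G(y')-\mathrm{chord}_G(y')>0$, is the correct implementation of the martingale Spence--Mirrlees mechanism and, in effect, what the paper actually does in its Step~2. Your appeal to the [BJ2016] shadow uniqueness for the last clause is likewise a valid, and cleaner, route than the paper's ``strict MSM implies unique optimizer.''

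One imprecision worth flagging: you write that ``strict convexity of $v\mapsto f^{(a,b)}_t(x,v)$\dots is the concrete condition on the data that makes \eqref{eq:rectangle_MSM} hold with the strict sign.'' This is not right as stated. The smooth MSM criterion reduces to $\partial_{xyy}\widetilde c_t=(f^{(a,b)}_t)_{vvv}-(f^{(a,b)}_t)_{xvv}>0$, which is a third-order condition on $f$ and is logically independent of $(f^{(a,b)}_t)_{vv}>0$ (the linear--quadratic $f_{\alpha,\beta}(v)=\alpha|v|+\beta v^2$, $\beta>0$, is uniformly strictly convex yet has $\partial_{xyy}\widetilde c_t\equiv 0$ off the kink, as the paper itself notes in Remark~\ref{rem:degenerate_MSM}). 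Strict convexity of $f$ plays a different role: it is what collapses the equal-slope system to a unique pair of endpoints when the MSM curvature is present. Your independent averaging check is fine; just decouple the two hypotheses (MSM strictness vs.\ strict convexity in $v$) when stating what each one buys.
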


\begin{proof}
We organize the proof in the following four points.
\smallskip

\textbf{The left–monotone geometry.}
Disintegrate the optimizer as
\[
\pi^\star(\mathrm dx,\mathrm dy)=\mu(\mathrm dx)\,\pi^\star(x,\mathrm dy),
\]
where $\pi^\star(x,\cdot)$ is a regular conditional law of $Y$ given $X=x$ under $\pi^\star$.
Fix a bounded Borel weight $w:\R\to[0,\infty)$ and set
\[
\alpha(\mathrm dx,\mathrm dy):=w(x)\,\mu(\mathrm dx)\,\pi^\star(x,\mathrm dy).
\]
Then $\alpha\ll\pi^\star$, has finite mass, the same $x$– and $y$–marginals as $\alpha$ scaled by $w$, and satisfies the martingale condition $\E_\alpha[Y\mid X]=X$ for $\mu$–a.e.\ $x$ (because $\E_{\pi^\star}[Y\mid X]=X$ and $w$ depends only on $x$).

Let $\alpha'$ be any martingale competitor of $\alpha$ in the sense of Definition~\ref{def:competitors}; in particular,
\[
\alpha'\circ X^{-1}=\alpha\circ X^{-1},\qquad \alpha'\circ Y^{-1}=\alpha\circ Y^{-1},\qquad \E_{\alpha'}[Y\mid X]=\E_\alpha[Y\mid X]\ \ \mu\text{–a.e.}
\]
For $\varepsilon>0$ small enough, define the signed perturbation
\[
\pi_\varepsilon:=\pi^\star-\varepsilon\,\alpha+\varepsilon\,\alpha'.
\]
By construction, $\pi_\varepsilon$ has $x$–marginal $\mu$ and $y$–marginal $\eta$, and it preserves the martingale constraint (the conditional barycentres at each $x$ are unchanged when replacing $\alpha$ by $\alpha'$). Hence $\pi_\varepsilon\in\Pi^M(\mu,\eta)$ for all sufficiently small $\varepsilon$.

Optimality of $\pi^\star$ implies
\[
0\ \ge\ \varepsilon^{-1} \left(\int \widetilde c_t\,\mathrm d\pi_\varepsilon-\int \widetilde c_t\,\mathrm d\pi^\star\right)
= -\int \widetilde c_t\,\mathrm d\alpha\ +\ \int \widetilde c_t\,\mathrm d\alpha',
\]
that is,
\begin{equation}
\label{eq:variational_ineq_step1}
\int \widetilde c_t\,\mathrm d\alpha\ \ge\ \int \widetilde c_t\,\mathrm d\alpha'.
\end{equation}

We now apply \eqref{eq:variational_ineq_step1} to a rectangle competitor constructed as in
\cite[Prop.~3.6]{BJ2016} or \cite[Lem.~3.3]{HLT2016}.
Assume for contradiction that there exist three points $(x,y_-),(x,y_+),(x',y')$ in the support of $\pi^\star$ with $x<x'$ and $y_-<y_+$
such that $y'\in(y_-,y_+)$. Choose small rectangles
$U_x\times V_{y_-}$, $U_x\times V_{y_+}$, $U_{x'}\times V_{y'}$ with positive $\pi^\star$–mass and pairwise disjoint, and take $w$ supported in $U_x\cup U_{x'}$ so that $\alpha$ charges exactly these three rectangles (after shrinking if necessary).
Define $\alpha'$ by the standard martingale rectangle swap which moves equal amounts of mass from
$U_x\times V_{y_-}$ and $U_{x'}\times V_{y'}$ to
$U_x\times \overline V$ and $U_{x'}\times V_{y_-}$ (and, symmetrically, from $U_x\times \overline V$ and $U_{x'}\times \overline V$ to the remaining corners),
with coefficients chosen so that both $x$– and $y$–marginals and the conditional means at $x$ and $x'$ are preserved, see \cite[Prop.~3.6]{BJ2016} for the explicit balancing and \cite[§3]{HLT2016} for the same rectangle test.

By linearity, the objective difference on this four–point competitor equals a positive multiple of the rectangle increment:
\[
\int \widetilde c_t\,\mathrm d\alpha - \int \widetilde c_t\,\mathrm d\alpha'
= m\,\Delta^{\square}\widetilde c_t\big((x,x');(y_-,y_+)\big),
\qquad m>0.
\]
Since $x<x'$ and $y_-<y_+$, the rectangle MSM condition \eqref{eq:rectangle_MSM} yields
$\Delta^{\square}\widetilde c_t((x,x');(y_-,y_+))>0$, contradicting
\eqref{eq:variational_ineq_step1}. Therefore no crossing rectangle can occur on the support of $\pi^\star$.
It follows that there exists a Borel set $\Gamma\subset\R^2$ with $\pi^\star(\Gamma)=1$ which is left–monotone:
if $(x,y_-),(x,y_+),(x',y')\in\Gamma$ with $x<x'$ and $y_-<y_+$, then $y'\notin(y_-,y_+)$.

\smallskip
\textbf{Bi–atomic kernel on monotone graphs.}
Following \cref{def:irreducible_components}, fix an irreducible component \(I\subset\R\) of \(\{\Delta F_t>0\}\) on which \(\mu\) is atomless, and let
\(\Gamma\subset\R^2\) be a left–monotone support of \(\pi^\star\). For \(x\in\R\) we set
\[
  \Gamma_x:=\{\,y\in\R:(x,y)\in\Gamma\,\}.
\]

We first show that \(\#\,\Gamma_x\le 2\) for \(\mu\)-a.e.\ \(x\in I\).
Assume to the contrary that the set
\[
A:=\{\,x\in I:\ \#\,\Gamma_x\ge 3\,\}
\]
has positive \(\mu\)-measure. By the Kuratowski–Ryll-Nardzewski selection theorem, there exist Borel maps
\(y_1<y_2<y_3:A\to\R\) such that \(y_i(x)\in\Gamma_x\) for \(i=1,2,3\).
Since \(\mu\) is atomless on \(I\), we may select \(x\in A\) which is a Lebesgue point for
the selections and for the conditional weights of \(\pi^\star(\cdot\mid x)\).
Choose \(x^-<x<x^+\) with \(x^\pm\in I\) arbitrarily close to \(x\) and such that
\((x^\pm,y^\pm)\in\Gamma\) for some \(y^\pm\in\Gamma_{x^\pm}\).
By left–monotonicity, no point with abscissa \(x'>x\) can lie in the open vertical
interval \((y_1(x),y_3(x))\); hence \(y^+\notin(y_1(x),y_3(x))\), so
\(y^+\le y_1(x)\) or \(y^+\ge y_3(x)\). Similarly \(y^-\notin(y_1(x),y_3(x))\), thus
\(y^-\le y_1(x)\) or \(y^-\ge y_3(x)\). Relabelling if needed, we may and do assume
\[
y^-\le y_1(x)\quad\text{and}\quad y^+\ge y_3(x).
\]

We now localize mass near these four points. Since the four points
\((x,y_2(x))\), \((x^+,y^+)\), \((x,y^+)\), \((x^-,y^-)\)
lie in \(\mathrm{supp}\,\pi^\star\), every product neighbourhood around each has positive \(\pi^\star\)–mass. We choose open intervals
\[
U_x,\ U_{x^-},\ U_{x^+}\subset I\quad\text{around }x,\ x^-,\ x^+,
\qquad
V_{y^-},\ V_{y_2},\ V_{y^+}\subset\R\quad\text{around }y^-,\ y_2(x),\ y^+,
\]
such that:
\begin{enumerate}
\item the four rectangles
\[
U_x\times V_{y_2},\quad U_{x^+}\times V_{y^+},\quad
U_x\times V_{y^+},\quad U_{x^-}\times V_{y^-}
\]
have pairwise disjoint interiors;
\item each has positive \(\pi^\star\)–mass:
\[
\pi^\star(U_x\times V_{y_2}),\ \pi^\star(U_{x^+}\times V_{y^+}),\
\pi^\star(U_x\times V_{y^+}),\ \pi^\star(U_{x^-}\times V_{y^-})\ >\ 0.
\]
\end{enumerate}
Define the conditional barycentres in the \(y\)–coordinate,
\[
\overline y_2
:=\frac{1}{\pi^\star(U_x\times V_{y_2})}\iint_{U_x\times V_{y_2}} y\,\pi^\star(\mathrm dx,\mathrm dy),\quad
\overline y^+
:=\frac{1}{\pi^\star(U_x\times V_{y^+})}\iint_{U_x\times V_{y^+}} y\,\pi^\star(\mathrm dx,\mathrm dy),
\]
\[
\overline y^-
:=\frac{1}{\pi^\star(U_x\times V_{y^-})}\iint_{U_x\times V_{y^-}} y\,\pi^\star(\mathrm dx,\mathrm dy),
\]
which, by shrinking the neighbourhoods, can be made arbitrarily close to \(y_2(x)\), \(y^+\), and \(y^-\), respectively.

For sufficiently small \(\varepsilon^+,\varepsilon^->0\), set the pair of finite measures
\[
\alpha
:=\varepsilon^+\big(\pi^\star \restriction(U_x\times V_{y_2})
+\pi^\star \restriction(U_{x^+}\times V_{y^+})\big)
+\varepsilon^-\big(\pi^\star \restriction(U_x\times V_{y_2})
+\pi^\star \restriction(U_{x^-}\times V_{y^-})\big),
\]
\[
\alpha'
:=\varepsilon^+\big(\pi^\star \restriction(U_x\times V_{y^+})
+\pi^\star \restriction(U_{x^+}\times V_{y_2})\big)
+\varepsilon^-\big(\pi^\star \restriction(U_x\times V_{y^-})
+\pi^\star \restriction(U_{x^-}\times V_{y_2})\big).
\]
By construction, \(\alpha\ll\pi^\star\), and \(\alpha\) and \(\alpha'\) have the same \(x\)– and \(y\)–marginals (they swap the two crossed rectangles). Impose the balance condition
\[
\varepsilon^+\big(\overline y^+-\overline y_2\big)
=\varepsilon^-\big(\overline y_2-\overline y^-\big),
\]
which ensures that the conditional mean at \(x\) is the same under \(\alpha\) and \(\alpha'\).
The conditional means at \(x^\pm\) match by symmetry of the two–point swaps. Hence
\(\alpha'\) is a martingale competitor of \(\alpha\) (Definition~\ref{def:competitors}).

Applying the variational inequality \eqref{eq:variational_ineq_step1} from Step~1 to the pair
\((\alpha,\alpha')\) and using the rectangle MSM \eqref{eq:rectangle_MSM}, we obtain
\[
\int \widetilde c_t\,\mathrm d\alpha - \int \widetilde c_t\,\mathrm d\alpha'
=\varepsilon^+\,\Delta^{\square}\widetilde c_t\big((x,x^+);(\overline y_2,\overline y^+)\big)
 +\varepsilon^-\,\Delta^{\square}\widetilde c_t\big((x^-,x);(\overline y^-,\overline y_2)\big)\ >\ 0,
\]
since \(\overline y^\pm,\overline y_2\) can be chosen arbitrarily close to \(y^\pm,y_2(x)\).
Replacing, inside \(\pi^\star\), a small amount of the configuration \(\alpha'\) by \(\alpha\)
would strictly increase the objective, contradicting optimality. We conclude that
\(\#\,\Gamma_x\le 2\) for \(\mu\)-a.e.\ \(x\in I\).

For such \(x\), if \(\Gamma_x\neq\{x\}\), we write the two points as
\[
T_d(x)\le x\le T_u(x),
\qquad
\pi^\star(\mathrm dy\mid x)=\theta(x)\,\delta_{T_u(x)}(\mathrm dy)+\bigl(1-\theta(x)\bigr)\,\delta_{T_d(x)}(\mathrm dy).
\]
The martingale constraint yields
\[
\theta(x)=\frac{x-T_d(x)}{T_u(x)-T_d(x)}\in[0,1].
\]
Indeed, if both atoms were strictly above \(x\) or strictly below \(x\), their average could not equal \(x\) unless both equal \(x\).

We finally show that the graphs are monotone. Let \(x<x'\) belong to \(I\).
If \(T_u(x')<T_u(x)\), then \(T_d(x)\le x<x'\le T_u(x')\) implies
\(T_u(x')\in\big(T_d(x),T_u(x)\big)\), and the three points
\((x,T_d(x)),(x,T_u(x)),(x',T_u(x'))\in\Gamma\) contradict left–monotonicity.
Therefore \(T_u\) is nondecreasing. Similarly, if \(T_d(x')>T_d(x)\), then
\(T_d(x')\in\big(T_d(x),T_u(x)\big)\) (since \(T_d(x')\le x'<T_u(x)\)), and
\((x,T_d(x)),(x,T_u(x)),(x',T_d(x'))\in\Gamma\) contradicts left–monotonicity, hence
\(T_d\) is nonincreasing on \(I\).

\smallskip
\textbf{Mass conservation.} For every bounded measurable function $\varphi:\R\to\R$,
\begin{equation}
\label{eq:disintegration_identity}
\int_\R \varphi(y)\,d\eta(y)
=\iint \varphi(y)\,\pi^\star(dx,dy)
=\int_\R \Bigl[\theta(x)\,\varphi\bigl(T_u(x)\bigr)+\bigl(1-\theta(x)\bigr)\,\varphi\bigl(T_d(x)\bigr)\Bigr]\,d\mu(x).
\end{equation}
Choosing $\varphi_k(y)=\mathbf 1_{\{y\le k\}}$ and using that $T_u$ is nondecreasing, $T_d$ is nonincreasing, we obtain
\begin{equation}
\label{eq:eta_CDF_image}
F_\eta(k)
=\int \theta(x)\,\mathbf 1_{\{T_u(x)\le k\}}\,d\mu(x)
 +\int \bigl(1-\theta(x)\bigr)\,\mathbf 1_{\{T_d(x)\le k\}}\,d\mu(x),
\qquad k\in\R.
\end{equation}

Next, take $\varphi_k(y)=(y-k)^+$ and use \eqref{eq:disintegration_identity} to get
\begin{equation}
\label{eq:call_potential_push}
\mathcal C_\eta(k)
=\int \theta(x)\,\bigl(T_u(x)-k\bigr)^+\,d\mu(x)
 +\int \bigl(1-\theta(x)\bigr)\,\bigl(T_d(x)-k\bigr)^+\,d\mu(x),
\end{equation}
hence, subtracting $\mathcal C_\mu(k)=\int (x-k)^+\,d\mu(x)$, we get
\begin{equation}
\label{eq:DeltaF_push}
\Delta F_t(k)
=\int \theta(x)\,\Bigl[(T_u(x)-k)^-\Bigr]^-_k
 +\int \bigl(1-\theta(x)\bigr)\,\Bigl[(T_d(x)-k)^+-(x-k)^+\Bigr]\,d\mu(x).
\end{equation}
Note that on $I$ we have $T_d(x)\le x\le T_u(x)$ \(\mu\)-a.e., but we do not need case distinctions in $k$.

Differentiating \eqref{eq:eta_CDF_image} in the sense of distributions with respect to $k$ yields the below pushforward identity
\begin{equation}
\label{eq:stieltjes_push}
dF_\eta
=(T_u)_\#\bigl(\theta\,\mu\bigr)+(T_d)_\#\bigl((1-\theta)\,\mu\bigr),
\end{equation}
i.e.\ for every bounded Borel $\psi$,
\[
\int \psi(k)\,dF_\eta(k)=\int \psi\bigl(T_u(x)\bigr)\,\theta(x)\,d\mu(x)+\int \psi\bigl(T_d(x)\bigr)\,\bigl(1-\theta(x)\bigr)\,d\mu(x).
\]
Likewise, differentiating \eqref{eq:DeltaF_push} and using
$\frac{d}{dk}(z-k)^+=-\mathbf 1_{\{z\le k\}}$ together with \eqref{eq:deltaF_derivative} gives, on $I$,
\begin{equation}
\label{eq:mass_conservation_differential}
dF_\eta\circ T_u=\theta\,dF_\mu,
\qquad
d(\Delta F_t)\circ T_d=-(1-\theta)\,dF_\mu,
\end{equation}
where composition denotes the change of variables along the monotone graphs
(equivalently, the image of $dF_\mu$ under $T_{u/d}$ with Radon–Nikodym weight $\theta$ or $1-\theta$).

Integrating \eqref{eq:mass_conservation_differential} from $-\infty$ up to $x$ and using $T_d\le\mathrm{Id}\le T_u$ yields the coupling identity
\begin{equation}
\label{eq:coupling_identity_proved}
F_\eta\bigl(T_u(x)\bigr)=F_\mu(x)+\Delta F_t\bigl(T_d(x)\bigr),
\end{equation}
that is, \eqref{eq:coupling_identity_fric}. This proves item~\textup{(3)}.
\smallskip

Outside the active set $I$ one has $T_d=T_u=\mathrm{Id}$ and the identities above are trivial.

\smallskip
\textbf{Equal–slope characterization of endpoints.}
Here we establish \eqref{eq:FOC_equal_slopes}-\eqref{eq:FOC_expanded}.
By Lemma~\ref{lem:dual_shift_KKT}, the one time-step dual problem with the
adjusted cost reduces to the standard martingale dual with the pure friction
cost $c(x,y):=f^{(a,b)}_t(x,y-x)$:
\begin{equation}
\label{eq:dual_reduced_here}
\sup_{\phi,\psi,h}\left\{
\int \phi\,d\mu+\int \psi\,d\eta:
\ \phi(x)+\psi(y)+h(x)\,(y-x)\ \le\ f^{(a,b)}_t\bigl(x,y-x\bigr)
\right\}.
\end{equation}
Existence of an optimal triple $(\phi,\psi,h)$ follows by standard convex–compactness
(Fenchel–Rockafellar), using lower semicontinuity and superlinear
coercivity in the trading variable, which are guaranteed by the assumptions on $f^{(a,b)}_t$.
Complementary slackness on the support of $\pi^\star$ reads
\begin{equation}
\label{eq:CS_support}
\phi(x)+\psi(y)+h(x)\,(y-x)=f^{(a,b)}_t\bigl(x,y-x\bigr).
\end{equation}
For fixed $x$, tightness of \eqref{eq:CS_support} at $y\in\mathrm{supp}\,\pi^\star(\cdot\mid x)$ means
$y$ maximizes $y\mapsto \psi(y)-f^{(a,b)}_t\bigl(x,y-x\bigr)-h(x)\,(y-x)$.
At the two contact points $y=T_d(x),~T_u(x)$, in the sense of subgradients, the first–order optimality yields
\[
\partial_y\psi\bigl(T_d(x)\bigr)-\partial_v f^{(a,b)}_t\bigl(x,T_d(x)-x\bigr)-h(x)
=\partial_y\psi\bigl(T_u(x)\bigr)-\partial_v f^{(a,b)}_t\bigl(x,T_u(x)-x\bigr)-h(x)=0.
\]
Rewriting for the adjusted cost
$\widetilde c_t(x,y)=\mathsf V_{t+1}(y)-\mathsf V_{t+1}(x)-f^{(a,b)}_t(x,y-x)$ shows that the two
contact points share a common $y$–slope:
\begin{equation}
\label{eq:equal_slope_adjusted}
\partial_y \widetilde c_t\bigl(x,T_d(x)\bigr)
=\partial_y \widetilde c_t\bigl(x,T_u(x)\bigr)=:\alpha_t(x).
\end{equation}
This gives us \eqref{eq:FOC_equal_slopes}. Expanding
$\partial_y\widetilde c_t=\partial_y\mathsf V_{t+1}(y)-\partial_v f^{(a,b)}_t(x,y-x)$
gives the primitive form
\begin{equation}
\label{eq:FOC_expanded_again}
\partial_y\mathsf V_{t+1}\bigl(T_d(x)\bigr)-\partial_v f^{(a,b)}_t\bigl(x,T_d(x)-x\bigr)
=\partial_y\mathsf V_{t+1}\bigl(T_u(x)\bigr)-\partial_v f^{(a,b)}_t\bigl(x,T_u(x)-x\bigr),
\end{equation}
which is \eqref{eq:FOC_expanded}. When $\mathsf V_{t+1}$ is differentiable,
\eqref{eq:equal_slope_adjusted}–\eqref{eq:FOC_expanded_again} are identities of classical derivatives.

Finally, if the rectangle MSM condition \eqref{eq:rectangle_MSM} holds strictly
(equivalently, in the smooth case, $\partial_{xyy}\widetilde c_t\ge \kappa_t>0$ on $\R^2$),
then the optimizer is unique. Hence the endpoints $(T_d,T_u)$ (and the weights $\theta$) are uniquely determined
$\mu$–a.e.\ on each irreducible component.
\end{proof}

\begin{remark}[Degenerate MSM and trade bands]
\label{rem:degenerate_MSM}
Assume the frictional MSM curvature is non–strict
\[
\partial_{xyy}\,\widetilde c_t \equiv 0
\quad\text{(for instance, } f^{(a,b)}_t(x,v)=\alpha |v|+\beta v^2 \text{ with no $x$–dependence and }(f^{(a,b)}_t)_{vvv}\equiv 0\text{).}
\]
Then:

\emph{(a) Monotonicity principle \cite[§3]{HLT2016} and \cite{BJ2016}.}
The variational competitor argument \eqref{eq:variational_ineq_step1} still implies that any optimizer is supported on a
$\widetilde c_t$–monotone set. In one dimension,
$\widetilde c_t$–monotonicity rules out crossings. On irreducible components with
$\mu$ atomless, a left–monotone optimizer exists and admits a bi–atomic disintegration
as in Theorem~\ref{thm:fric_monotone}. However, without strict curvature one cannot
exclude additional optimal couplings as some optimizers need not be (globally) left–monotone.

\emph{(b) Possible nonuniqueness.}
Loss of strict curvature creates flat directions, so uniqueness of the optimizer is not
guaranteed. In particular, the endpoint system consisting of the mass balance
\[
F_{\eta}  \bigl(T_u^{(t)}(x)\bigr)=F_{\mu}(x)+\Delta F_t  \bigl(T_d^{(t)}(x)\bigr),
\]
together with the equal–slope condition
\[
\partial_y \mathsf V_{t+1}  \bigl(T_d^{(t)}(x)\bigr)-\partial_v f^{(a,b)}_t  \bigl(x,T_d^{(t)}(x)-x\bigr)
=
\partial_y \mathsf V_{t+1}  \bigl(T_u^{(t)}(x)\bigr)-\partial_v f^{(a,b)}_t  \bigl(x,T_u^{(t)}(x)-x\bigr),
\]
may admit multiple solutions compatible with the monotonicity of $T_d^{(t)}$ and
$T_u^{(t)}$. Uniqueness can still be recovered under additional conditions, for instance\ strict monotonicity of
$y\mapsto \partial_y\mathsf V_{t+1}(y)-\partial_v f^{(a,b)}_t(x,y-x)$ on the active range.

\emph{(c) Trade bands with proportional costs.}
If $v\mapsto f^{(a,b)}_t(x,v)$ has a linear part, then $0\in\partial_v f^{(a,b)}_t(x,0)$.
The KKT equal–slope condition in subgradient form \eqref{eq:conjugacy-KKT} is satisfied by
$T_d^{(t)}(x)=T_u^{(t)}(x)=x$ whenever the dual slope $h_t(x)\in\partial_v f^{(a,b)}_t(x,0)$.
In the linear–quadratic case $f_{\alpha,\beta}(v)=\alpha|v|+\beta v^2$ this yields a no-trade region which is defined by the band
\[
B_t=\{\,x:\ |h_t(x)|\le \alpha\,\},
\]
on which the optimizer keeps mass on the diagonal. More details will follow in \cref{sec:bands_applications}.
\end{remark}

\subsection{Stability and vanishing–friction limit}
\label{subsec:stability_limit}

We record the continuity of the frictional left–monotone construction with respect to the marginals and the friction parameters, together with its convergence to the frictionless (left–curtain) limit as frictions vanish.

\begin{theorem}[Stability of endpoints and couplings]
\label{thm:fric_brenier_stability}
Fix $t\in\{0,\dots,N-1\}$ and set $\mu:=\mu_t$, $\eta:=\mu_{t+1}$.
Let $(\mu^n,\eta^n)_{n\ge1}$ be probability measures with $\mu^n\preceq_{\mathrm{cx}}\eta^n$ for all $n$ and $\mu\preceq_{\mathrm{cx}}\eta$, and assume $(\mu^n,\eta^n)\to(\mu,\eta)$ in $W_1$ \footnote{Here \(W_1\) denotes the 1–Wasserstein distance on \(\mathcal P_1(\R)\). It is defined as
\[
W_1(\mu,\nu)\ :=\ \inf_{\pi\in\Pi(\mu,\nu)}\int_{\R^2}|x-y|\,\mathrm d\pi(x,y)
\ =\ \sup_{\operatorname{Lip}(\varphi)\le 1}\int_{\R}\varphi\,\mathrm d(\mu-\nu),
=\int_0^1\big|F_\mu^{-1}(u)-F_\nu^{-1}(u)\big|\,\mathrm du
\ =\ \int_{\R}\big|F_\mu(x)-F_\nu(x)\big|\,\mathrm dx.\] Convergence \(\mu_n\to\mu\) in \(W_1\) is equivalent to weak convergence plus convergence of first moments, see~\cite[Thm.~6.9]{Villani2009} and \cite[Chapter~6]{AmbrosioGigliSavare2008}.}
.
Assume the standing integrability of Section~\ref{sec:setup} (in particular, $\sup_n\int |x|^p\,\mathrm d\mu^n(x)+\int |y|^p\,\mathrm d\eta^n(y)<\infty$ for some $p\ge2$), and that $\mathsf V_{t+1}:\R\to\R$ has at most linear growth.

For the friction, let $f(x,v):=f^{(a,b)}_t(x,v)$ and $f^n(x,v):=f^{(a^n,b^n)}_t(x,v)$ satisfy:
\begin{itemize}
\item[(i)] There exists $m>0$ such that
\begin{equation}
\label{eq:uniform_convexity}
\partial_{vv} f(x,v)\ \ge\ m \quad\text{and}\quad \partial_{vv} f^n(x,v)\ \ge\ m
\qquad \text{for all }(x,v)\in\R^2,\ \text{for all }n.
\end{equation}
\item[(ii)]$f^n(x,v)\to f(x,v)$ pointwise for every $(x,v)\in\R^2$.
\item[(iii)] The adjusted cost
\[
\widetilde c_t(x,y):=\mathsf V_{t+1}(y)-\mathsf V_{t+1}(x)-f(x,y-x)
\]
satisfies the rectangle MSM condition \eqref{eq:rectangle_MSM} (equivalently, the smooth criterion \eqref{eq:MSM_smooth}). Likewise, the same holds with $f$ replaced by $f^n$ for each $n$.
\end{itemize}

For each $n$, let $\pi^{\star,n}\in\Pi^M(\mu^n,\eta^n)$ be a (left–monotone) maximizer of
\begin{equation}
\label{eq:Jn_def}
J_n(\pi):=\int_{\R^2}\Big(\mathsf V_{t+1}(y)-\mathsf V_{t+1}(x)-f^n(x,y-x)\Big)\,\mathrm d\pi(x,y),
\end{equation}
and denote by $\bigl(T_d^{(t),n},T_u^{(t),n},\theta_t^n\bigr)$ the associated endpoints and weights on the active set, as in Theorem~\ref{thm:fric_monotone}. Let $\pi^\star$ and $\bigl(T_d^{(t)},T_u^{(t)},\theta_t\bigr)$ be the corresponding objects for the limit problem $(\mu,\eta,f)$.

Then, along a subsequence,
\[
\pi^{\star,n}\ \rightharpoonup\ \pi^\star\qquad\text{in }\mathcal P(\R^2).
\]
Moreover, letting $T^n:=F_{\mu^n}^{-1}\circ F_\mu$ be the monotone rearrangement pushing $\mu$ to $\mu^n$, one has
\[
T_d^{(t),n}\circ T^n \;\to\; T_d^{(t)},\qquad
T_u^{(t),n}\circ T^n \;\to\; T_u^{(t)}
\quad\text{in }L^1(\mu)\text{ and }\mu\text{–a.e.,}
\]
and
\[
\theta_t^n\circ T^n \;\to\; \theta_t\quad\text{in }L^1(\mu).
\]
\end{theorem}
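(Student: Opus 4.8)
The plan is to read the statement as a $\Gamma$–type stability result for the one time–step frictional problems and then to transfer the resulting convergence of the couplings to the endpoint maps via Helly's selection theorem and the $\mu$–a.e. uniqueness of the bi–atomic representation in \cref{thm:fric_monotone}. First I would establish compactness and feasibility of a subsequential limit. Since the first and second marginals of $\pi^{\star,n}$ are $\mu^n$ and $\eta^n$ with $\sup_n\bigl(\int|x|^p\,\mathrm d\mu^n+\int|y|^p\,\mathrm d\eta^n\bigr)<\infty$, Markov's inequality makes $\{\pi^{\star,n}\}$ tight, so by Prokhorov a subsequence satisfies $\pi^{\star,n}\rightharpoonup\bar\pi$. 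Because $p\ge2>1$, the map $(x,y)\mapsto|x|+|y|$ is uniformly integrable along $\pi^{\star,n}$, so the marginals of $\bar\pi$ are the weak limits $\mu,\eta$, and testing $\E_{\pi^{\star,n}}[(Y-X)g(X)]=0$ against bounded continuous $g$ (again using uniform integrability of $Y-X$) gives $\E_{\bar\pi}[Y\mid X]=X$; thus $\bar\pi\in\Pi^M(\mu,\eta)$.

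Next I would show $\bar\pi$ is optimal for the limit problem, with functional $J(\pi):=\int(\mathsf V_{t+1}(y)-\mathsf V_{t+1}(x))\,\mathrm d\pi-\int f(x,y-x)\,\mathrm d\pi$. For the $\limsup$ bound, the term $\int(\mathsf V_{t+1}(y)-\mathsf V_{t+1}(x))\,\mathrm d\pi^{\star,n}$ converges to its $\bar\pi$–value by continuity and at most linear growth of $\mathsf V_{t+1}$ together with the uniform integrability above, while $\liminf_n\int f^n\,\mathrm d\pi^{\star,n}\ge\int f\,\mathrm d\bar\pi$ since convex $f^n\to f$ pointwise converge locally uniformly and $f\ge0$: on each compact $K$ with $\bar\pi(\partial K)=0$ one has $\int_K f^n\,\mathrm d\pi^{\star,n}\ge\int_K f\,\mathrm d\pi^{\star,n}-o(1)\to\int_K f\,\mathrm d\bar\pi$, then let $K\uparrow\R^2$. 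Hence $\limsup_n J_n(\pi^{\star,n})\le J(\bar\pi)$. For the $\liminf$ bound I would invoke stability of the set of martingale couplings under convergence of marginals in convex order to produce a recovery sequence $\pi^n\in\Pi^M(\mu^n,\eta^n)$ with $\pi^n\to\pi^\star$ (the limit optimizer of \cref{thm:fric_monotone}) and uniform $p$–moments, along which $J_n(\pi^n)\to J(\pi^\star)$ by the same growth/uniform–integrability arguments (the uniform moment and coercivity bounds of \cref{ass:convex_order}–\cref{ass:fric-assump} supplying the integrability needed to pass to the limit in all $p$–growth terms); since $\pi^{\star,n}$ maximizes $J_n$, $J_n(\pi^{\star,n})\ge J_n(\pi^n)$, so $\liminf_n J_n(\pi^{\star,n})\ge J(\pi^\star)=\sup_{\Pi^M(\mu,\eta)}J$. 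Combining, $J(\bar\pi)=\sup J$, i.e.\ $\bar\pi$ is a maximizer; by strictness of the rectangle MSM and the uniform convexity \eqref{eq:uniform_convexity}, \cref{thm:fric_monotone} forces uniqueness, so $\bar\pi=\pi^\star$ and, every subsequence having the same limit, $\pi^{\star,n}\rightharpoonup\pi^\star$ along the full sequence.

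Finally I would transfer this to the maps. Put $T^n:=F_{\mu^n}^{-1}\circ F_\mu$; then $T^n_\#\mu=\mu^n$ and, since $W_1$–convergence is $L^1$–convergence of quantiles, $\int|T^n-\mathrm{Id}|\,\mathrm d\mu=W_1(\mu^n,\mu)\to0$, so $T^n\to\mathrm{Id}$ $\mu$–a.e.\ (along a subsequence) and locally uniformly. Write $D_n:=T_d^{(t),n}\circ T^n$, $U_n:=T_u^{(t),n}\circ T^n$, $\Theta_n:=\theta_t^n\circ T^n$; on every compact $K\Subset I$ these are well defined for $n$ large (since $\{\Delta F_t^n>0\}\supset K$ eventually, using $\Delta F_t^n\to\Delta F_t$ uniformly — itself from $|\mathcal C_{\mu^n}-\mathcal C_\mu|\le W_1$), with $D_n$ nonincreasing, $U_n$ nondecreasing, and $D_n\le T^n\le U_n$. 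The key estimate is that $D_n$ and $U_n$ are uniformly bounded on $K$: if $U_n\to+\infty$ on a set of positive $\mu$–measure then $\partial_v f^n(x,U_n(x)-x)\to+\infty$ by the uniform superlinearity \eqref{eq:fric-assump}(ii), while the lower branch is pinned near $x$ by the barycenter identity and the uniform $L^p$–bound $\sup_n\int|y-x|^p\,\mathrm d\pi^{\star,n}<\infty$ (from \cref{ass:fric-assump}(iii) and the a priori bound on $J_n(\pi^{\star,n})$) — the two branches cannot escape simultaneously without driving the increments out of $L^p$ on a positive-measure set — so the two $y$–slopes in \eqref{eq:FOC_expanded} diverge at incompatible rates, a contradiction (and symmetrically for $D_n\to-\infty$). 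By Helly's selection theorem and a diagonal exhaustion of $I$, a subsequence yields $D_n\to D$, $U_n\to U$, and $\Theta_n\to\Theta$ pointwise $\mu$–a.e.\ on $I$. To identify the limit, set $\widehat\pi^{\,n}:=\mu(\mathrm dx)\otimes\bigl[\Theta_n(x)\,\delta_{U_n(x)}+(1-\Theta_n(x))\,\delta_{D_n(x)}\bigr]$; transporting $(x,y)\mapsto(T^n(x),y)$ shows $W_1(\widehat\pi^{\,n},\pi^{\star,n})\le W_1(\mu,\mu^n)\to0$, so $\widehat\pi^{\,n}\to\pi^\star$, while bounded convergence along the pointwise limits of the maps gives $\widehat\pi^{\,n}\rightharpoonup\mu\otimes[\Theta\,\delta_{U}+(1-\Theta)\,\delta_{D}]$. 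Hence $\pi^\star=\mu\otimes[\Theta\,\delta_{U}+(1-\Theta)\,\delta_{D}]$, and since by \cref{thm:fric_monotone} the bi–atomic representation of $\pi^\star$ (two atoms straddling $x$, weights fixed by the barycenter) is $\mu$–a.e.\ unique, $D=T_d^{(t)}$, $U=T_u^{(t)}$, $\Theta=\theta_t$ $\mu$–a.e. As every subsequence has a further subsequence with these same a.e.\ limits, the full sequences converge $\mu$–a.e.; uniform integrability of $\{D_n\}$ and $\{U_n\}$ in $L^1(\mu)$ — from the uniform $p$–moments of $\eta^n$ and the $L^p$–increment bound via truncation near the endpoints of $I$ — upgrades this to $L^1(\mu)$, and $\Theta_n\to\theta_t$ in $L^1(\mu)$ since $\Theta_n$ is uniformly bounded.

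The main obstacle is the uniform local control of the endpoint maps in the last step: it is precisely the superlinearity of $f^n$, fed through the equal–slope relation \eqref{eq:FOC_expanded}, that prevents the two branches from drifting to infinity, and this is the genuinely frictional ingredient distinguishing the argument from frictionless left–curtain stability; making the "diverging slope dominates" step fully rigorous requires the case analysis (one branch escaping with non-degenerate versus vanishing weight, and both branches escaping together, the last ruled out by the $L^p$–increment bound). A secondary subtlety is the bookkeeping of uniform integrability in the passages to the limit for the friction and continuation terms — equivalently, a little room between the moment exponent in \cref{ass:convex_order} and the growth exponent of $f^n$; and I deliberately avoid passing to the limit in \eqref{eq:FOC_expanded} or \eqref{eq:coupling_identity_fric} directly (which would be delicate when $\mathsf V_{t+1}$ is merely Lipschitz and $f$ has a linear part, so that only subgradients are available) by instead identifying the limiting coupling through $\widehat\pi^{\,n}\to\pi^\star$ and invoking uniqueness from \cref{thm:fric_monotone}.
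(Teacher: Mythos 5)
Your proof is correct and follows the paper's architecture in Parts~1--3 (tightness, feasibility of the weak limit, the $\limsup$/$\liminf$ comparison via a recovery sequence, and then invoking uniqueness from \cref{thm:fric_monotone} to get full-sequence weak convergence of $\pi^{\star,n}$). Where you diverge is the final identification of the limiting maps. The paper passes to the limit directly in the differential mass relations $\mathrm dF_{\eta^n}\circ T_u^{(t),n}=\theta_t^n\,\mathrm dF_{\mu^n}$ and $\mathrm d(\Delta F_t^n)\circ T_d^{(t),n}=-(1-\theta_t^n)\,\mathrm dF_{\mu^n}$ using the change-of-variables identities \eqref{eq:stieltjes_test_identity}--\eqref{eq:stieltjes_test_identity_delta}, and then argues that the limiting pair $(T_d,T_u)$ must coincide with $(T_d^{(t)},T_u^{(t)})$ by uniqueness of the disintegration. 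You instead avoid these Stieltjes manipulations entirely: you precompose with the monotone rearrangement $T^n=F_{\mu^n}^{-1}\circ F_\mu$ to build an auxiliary coupling $\widehat\pi^{\,n}$ with fixed first marginal $\mu$, bound $W_1(\widehat\pi^{\,n},\pi^{\star,n})\le W_1(\mu,\mu^n)$ via the obvious coupling that preserves the $y$-coordinate, pass to the pointwise limit of the monotone maps under the weak convergence of $\widehat\pi^{\,n}$, and then match the resulting bi-atomic kernel against the unique disintegration of $\pi^\star$. This is a cleaner and arguably more robust identification (you rightly note it avoids passing to the limit in the equal-slope relation \eqref{eq:FOC_expanded}, which is subgradient-valued when $\mathsf V_{t+1}$ is merely Lipschitz or when the linear friction is active). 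Conversely, the paper's route via the mass relations is closer to the explicit endpoint characterization and extracts the identification from the same identities used in \cref{thm:fric_monotone}. One additional contribution in your write-up is the explicit local boundedness argument for $U_n, D_n$ on compacts of $I$ before invoking Helly --- the paper applies Helly without addressing possible escape to $\pm\infty$; your argument via the equal-slope relation and the $L^p$-increment bound is genuinely needed (although, as you say, making the ``one branch escapes with vanishing weight'' case fully rigorous requires care, since the coercivity bound \eqref{eq:coercivity_disp} controls only $\theta$-weighted squared displacements and the weight can degenerate). Both approaches then upgrade pointwise convergence to $L^1(\mu)$ via the same coercivity-plus-uniform-integrability device, and handle $\theta_t^n\to\theta_t$ identically by the barycenter identity off the no-trade set.
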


\begin{proof}
\emph{Part 1.} Since $(\mu^n,\eta^n)\to(\mu,\eta)$ in $W_1$, the families $\{\mu^n\}_n$ and $\{\eta^n\}_n$ are tight with uniformly bounded first moments. By the standing $p\ge2$ moment assumption they in fact have uniformly bounded second moments. If $\pi^{\star,n}$ has marginals $(\mu^n,\eta^n)$, then $\{\pi^{\star,n}\}_n$ is tight on $\R^2$: given $\varepsilon>0$, choose compact sets $K,L\subset\R$ with
\[
\inf_n\mu^n(K)\ge 1-\varepsilon/2,\qquad \inf_n\eta^n(L)\ge 1-\varepsilon/2,
\]
which is possible by tightness of the marginals. Then, for every $n$,
\[
\pi^{\star,n}(K\times L)\ \ge\ \mu^n(K)+\eta^n(L)-1\ \ge\ 1-\varepsilon.
\]
By Prokhorov’s theorem (\cite[Thm.~6.1]{Villani2009}) there exists a subsequence and $\bar\pi\in\mathcal P(\R^2)$ with $\pi^{\star,n}\rightharpoonup\bar\pi$.

We verify $\bar\pi\in\Pi^M(\mu,\eta)$. For any $\varphi\in C_b(\R)$,
\[
\int \varphi(x)\,\bar\pi(dx,dy)
=\lim_{n\to\infty}\int \varphi(x)\,\pi^{\star,n}(dx,dy)
=\lim_{n\to\infty}\int \varphi\,d\mu^n
=\int \varphi\,d\mu,
\]
and similarly $\bar\pi\circ Y^{-1}=\eta$.

For the martingale property, fix $\phi\in C_b(\R)$ and set $g(x,y):=\phi(x)\,(y-x)$. The function $g$ is continuous with at most linear growth:
$|g(x,y)|\le \|\phi\|_\infty\,(|x|+|y|)$. The uniform second–moment bounds on $\{\mu^n\},\{\eta^n\}$ imply a uniform first–moment bound on $\{\pi^{\star,n}\}$:
\[
\sup_n\int_{\R^2}(|x|+|y|)\,\pi^{\star,n}(dx,dy)
\ \le\ \sup_n\Big(\int|x|\,d\mu^n+\int|y|\,d\eta^n\Big)\ <\ \infty.
\]
Thus $\{g(X,Y)\}_{n}$ is uniformly integrable under $\pi^{\star,n}$. Let $g_M:=g\,\mathbf 1_{\{|x|+|y|\le M\}}$. Then $g_M$ is bounded and continuous, so
\[
\int g_M\,d\bar\pi
=\lim_{n\to\infty}\int g_M\,d\pi^{\star,n}.
\]
Since each $\pi^{\star,n}$ is a martingale coupling, $\int g\,d\pi^{\star,n}=0$ for all $n$. Using uniform integrability,
\[
\lim_{M\to\infty}\sup_n\Big|\int (g-g_M)\,d\pi^{\star,n}\Big|=0,
\qquad
\lim_{M\to\infty}\Big|\int (g-g_M)\,d\bar\pi\Big|=0.
\]
Hence
\[
\int g\,d\bar\pi=\lim_{M\to\infty}\lim_{n\to\infty}\int g_M\,d\pi^{\star,n}=0.
\]
As this holds for all $\phi\in C_b(\R)$, we conclude $\E_{\bar\pi}[Y\mid X]=X$, i.e., $\bar\pi\in\Pi^M(\mu,\eta)$.

\emph{Part 2.}
Define
\begin{equation}
\label{eq:J_def}
J(\pi):=\int_{\R^2}\Big(\mathsf V_{t+1}(y)-\mathsf V_{t+1}(x)-f(x,y-x)\Big)\,d\pi(x,y).
\end{equation}
We first show
\begin{equation}
\label{eq:limsup_Jn}
\limsup_{n\to\infty} J_n(\pi^{\star,n})\ \le\ J(\bar\pi).
\end{equation}
Since $\pi^{\star,n}\Rightarrow\bar\pi$ and $\mathsf V_{t+1}$ has at most linear growth, the map
$(x,y)\mapsto\mathsf V_{t+1}(y)-\mathsf V_{t+1}(x)$ is integrable with respect to $\pi^{\star,n}$ uniformly in $n$
(using the uniform first-moment bound from the $p\ge2$ assumption on the marginals). Hence
\[
\int \big(\mathsf V_{t+1}(y)-\mathsf V_{t+1}(x)\big)\,d\pi^{\star,n}\ \longrightarrow\
\int \big(\mathsf V_{t+1}(y)-\mathsf V_{t+1}(x)\big)\,d\bar\pi.
\]
For the friction term, uniform strict convexity \eqref{eq:uniform_convexity} yields the quadratic lower bound
\begin{equation}
\label{eq:quadratic_lower}
f^n(x,v)\ \ge\ \tfrac m2\,v^2 - c\,(1+|x|)\qquad (x,v)\in\R^2,
\end{equation}
with $c>0$ independent of $n$. The right-hand side has uniformly integrable positive part under $\pi^{\star,n}$
(thanks to the uniform second-moment bounds), so $\{f^n(X,Y-X)\}_n$ is uniformly integrable. If $f^n\to f$
pointwise (e.g., in the linear–quadratic family with $(a^n,b^n)\to(a,b)$ and bounded coefficients), then
by Fatou’s lemma together with uniform integrability,
\[
\liminf_{n\to\infty}\int f^n\,d\pi^{\star,n}\ \ge\ \int f\,d\bar\pi.
\]
Since
$J_n(\pi^{\star,n})=\int(\mathsf V_{t+1}(y)-\mathsf V_{t+1}(x))\,d\pi^{\star,n}-\int f^n\,d\pi^{\star,n}$,
we obtain \eqref{eq:limsup_Jn}.

Conversely, fix $\pi\in\Pi^M(\mu,\eta)$. A standard stability argument \cite{HLT2016, BeiglbockNutzTouzi2017, BackhoffVeraguasPammer2023} under $W_1$ (couple $\mu^n$ to $\mu$
and $\eta$ to $\eta^n$, then correct any martingale defect by a local two-point split) produces
$\pi^n\in\Pi^M(\mu^n,\eta^n)$ with $\pi^n\Rightarrow\pi$ and
\begin{equation}
\label{eq:lim_Jn_lower}
\lim_{n\to\infty}\int(\mathsf V_{t+1}(y)-\mathsf V_{t+1}(x))\,d\pi^n
=\int(\mathsf V_{t+1}(y)-\mathsf V_{t+1}(x))\,d\pi,
\qquad
\lim_{n\to\infty}\int f^n\,d\pi^n=\int f\,d\pi.
\end{equation}
The first limit follows as above from weak convergence and linear growth. For the friction term, use pointwise
convergence $f^n\to f$ and uniform integrability of $\{f^n(X,Y-X)\}_n$ under $\pi^n$, which follows from
\eqref{eq:quadratic_lower} and the uniform second-moment bounds on the marginals. Therefore
$\lim_{n}J_n(\pi^n)=J(\pi)$.

By optimality of $\pi^{\star,n}$ we have
\begin{equation}
\label{eq:liminf_values}
\liminf_{n\to\infty} J_n(\pi^{\star,n})\ \ge\ \lim_{n\to\infty}J_n(\pi^n)=J(\pi).
\end{equation}
Taking the supremum over $\pi\in\Pi^M(\mu,\eta)$ and combining with \eqref{eq:limsup_Jn} yields
$J(\bar\pi)=\sup_{\pi\in\Pi^M(\mu,\eta)}J(\pi)$, i.e., $\bar\pi$ maximizes the limit functional.

\emph{Part 3.}
By Theorem~\ref{thm:fric_monotone}, the rectangle MSM condition together with the
strict convexity \eqref{eq:uniform_convexity} guarantee existence and uniqueness
of the left–monotone maximizer for the limit problem $(\mu,\eta,f)$, call it $\pi^\star$.
By Step~2, every weak limit point $\bar\pi$ of $\{\pi^{\star,n}\}_n$ maximizes $J$,
hence $\bar\pi=\pi^\star$. Since the family $\{\pi^{\star,n}\}_n$ is tight (Step~1),
sequential compactness and uniqueness of the limit imply that the whole sequence converges:
\[
\pi^{\star,n}\ \Rightarrow\ \pi^\star\qquad\text{in }\mathcal P(\R^2).
\]

 \emph{Part 4.}
Fix an irreducible component $I$ of $\{\Delta F_t>0\}$ on which $\mu$ is atomless.
For each $n$, Theorem~\ref{thm:fric_monotone} yields Borel maps
\[
T_d^{(t),n}\le \mathrm{Id}\le T_u^{(t),n},\qquad
T_d^{(t),n}\ \text{nonincreasing on }I,\quad T_u^{(t),n}\ \text{nondecreasing on }I,
\]
and weights $\theta_t^n:I\to[0,1]$ such that
\begin{equation}
\label{eq:barycenter_identity_stab}
x=\theta_t^n(x)\,T_u^{(t),n}(x)+\bigl(1-\theta_t^n(x)\bigr)\,T_d^{(t),n}(x)\qquad\text{for }\mu^n\text{–a.e.\ }x\in I,
\end{equation}
together with the mass relations
\begin{equation}
\label{eq:mass_relations_stab}
\mathrm dF_{\eta^n} \bigl(T_u^{(t),n}(x)\bigr)=\theta_t^n(x)\,\mathrm dF_{\mu^n}(x),
\qquad
\mathrm d \bigl(\Delta F_t^n\bigr) \bigl(T_d^{(t),n}(x)\bigr)= -\bigl(1-\theta_t^n(x)\bigr)\,\mathrm dF_{\mu^n}(x),
\end{equation}
where $\Delta F_t^n:=\mathcal C_{\eta^n}-\mathcal C_{\mu^n}$.

Since $(\mu^n,\eta^n)\to(\mu,\eta)$ in $W_1$, we have $F_{\mu^n}\to F_\mu$, $F_{\eta^n}\to F_\eta$, and $\Delta F_t^n\to\Delta F_t$ locally uniformly at continuity points. By Helly’s selection theorem we may extract a subsequence such that
\begin{equation*}
T_u^{(t),n}\to T_u \ \text{pointwise on a dense subset of }I\ \text{with }T_u\ \text{nondecreasing},
\end{equation*}
\begin{equation*}
T_d^{(t),n}\to T_d \ \text{pointwise on a dense subset of }I\ \text{with }T_d\ \text{nonincreasing}.
\end{equation*}
Passing to the limit in \eqref{eq:mass_relations_stab} using the test identities
\eqref{eq:stieltjes_test_identity}–\eqref{eq:stieltjes_test_identity_delta} (i.e., change of variables along the monotone graphs) shows that $(T_d,T_u)$ satisfies the limiting mass relations on $I$. Since $\pi^\star$ is the unique left–monotone maximizer (Steps~2–3), its disintegration on $I$ is unique, therefore $(T_d,T_u)=(T_d^{(t)},T_u^{(t)})$ $\mu$–a.e.\ on $I$. Relabeling the subsequence, we obtain
\begin{equation}
\label{eq:ae_conv_graphs}
T_{d/u}^{(t),n}(x)\ \longrightarrow\ T_{d/u}^{(t)}(x)\qquad\text{for }\mu\text{–a.e.\ }x\in I.
\end{equation}

We upgrade \eqref{eq:ae_conv_graphs} to $L^1(\mu)$. Uniform convexity \eqref{eq:uniform_convexity}, optimality, and comparison with the diagonal plan yield the quadratic coercivity bound
\begin{equation}
\label{eq:coercivity_disp}
\int \bigl(T_u^{(t),n}(x)-x\bigr)^2\,\theta_t^n(x)\,\mathrm d\mu^n(x)
+\int \bigl(x-T_d^{(t),n}(x)\bigr)^2\,\bigl(1-\theta_t^n(x)\bigr)\,\mathrm d\mu^n(x)
\ \le\ C,
\end{equation}
for some $C$ independent of $n$. Let $\gamma^n$ be an optimal $W_1$–coupling between $\mu^n$ and $\mu$. Pulling back the integrands in \eqref{eq:coercivity_disp} along $\gamma^n$ and using \eqref{eq:ae_conv_graphs} together with uniform integrability (which follows from \eqref{eq:coercivity_disp}) gives, by Vitali’s convergence theorem,
\begin{equation}
\label{eq:L1_conv_graphs}
\int |T_{d/u}^{(t),n}-T_{d/u}^{(t)}|\,\mathrm d\mu\ \longrightarrow\ 0.
\end{equation}

Finally, by \eqref{eq:barycenter_identity_stab},
\[
\theta_t^n(x)
=\frac{x-T_d^{(t),n}(x)}{T_u^{(t),n}(x)-T_d^{(t),n}(x)}\,\mathbf 1_{\{T_u^{(t),n}\ne T_d^{(t),n}\}},
\]
and the same identity holds for $\theta_t$. On the no–trade set $\{T_u^{(t)}=T_d^{(t)}\}$ both numerator and denominator vanish simultaneously, elsewhere the denominator is bounded away from $0$ on compact subintervals of $I$. Combining \eqref{eq:L1_conv_graphs} with dominated convergence yields
\[
\int |\theta_t^n-\theta_t|\,\mathrm d\mu\ \longrightarrow\ 0.
\]
Since $I$ was arbitrary and transport is trivial off the active set, the conclusions hold on $\R$, which completes the proof.

\end{proof}

\begin{corollary}[Vanishing–friction limit in the linear–quadratic case]
\label{cor:vanish}
Let $t\in\{0,\dots,N-1\}$ and $\mu:=\mu_t$, $\eta:=\mu_{t+1}$ with $\mu\preceq_{\mathrm{cx}}\eta$.
Assume $\mathsf V_{t+1}\in C^3(\R)$ with $\mathsf V_{t+1}'''(y)\ge \kappa>0$, and the moment conditions of Section~\ref{sec:setup}.
For $n\in\mathbb{N}$, let $f^n(v):=\alpha_n|v|+\beta_n v^2$ (independent of $x$) with $\alpha_n\downarrow0$, $\beta_n\downarrow0$, and $\beta_n>0$.
Let $\pi^{\star,n}\in\Pi^M(\mu,\eta)$ be a left–monotone maximizer of
\[
\pi\ \longmapsto\ \int_{\R^2}\Big(\mathsf V_{t+1}(y)-\mathsf V_{t+1}(x)-f^n(y-x)\Big)\,d\pi(x,y).
\]
On each irreducible component of $\{\Delta F_t>0\}$, write the associated bi–atomic
disintegration as in Theorem~\ref{thm:fric_monotone}:
\[
\pi^{\star,n}(dy\mid x)
=\theta_t^n(x)\,\delta_{T_u^{(t),n}(x)}(dy)
+\bigl(1-\theta_t^n(x)\bigr)\,\delta_{T_d^{(t),n}(x)}(dy),
\qquad
\theta_t^n(x)=\frac{x-T_d^{(t),n}(x)}{T_u^{(t),n}(x)-T_d^{(t),n}(x)}.
\]
Then, along a subsequence,
\[
\pi^{\star,n}\ \rightharpoonup\ \pi^{\mathrm{lc}}
\quad\text{in }\mathcal P(\R^2),
\qquad
T_d^{(t),n}\to T_d^{(t),0},\ \ T_u^{(t),n}\to T_u^{(t),0}\ \ \text{in }L^1(\mu)\text{ and }\mu\text{–a.e.,}
\]
where $\pi^{\mathrm{lc}}$ is the frictionless left–curtain coupling for $(\mu,\eta)$ and
$(T_d^{(t),0},T_u^{(t),0})$ are its endpoints. In particular, the equal–slope system
\eqref{eq:FOC_equal_slopes}–\eqref{eq:FOC_expanded} degenerates to the frictionless
touching condition
\begin{equation}
\label{eq:touching_frictionless}
\partial_y\mathsf V_{t+1}\bigl(T_d^{(t),0}(x)\bigr)
=\partial_y\mathsf V_{t+1}\bigl(T_u^{(t),0}(x)\bigr)
\qquad\text{for }\mu\text{–a.e.\ }x,
\end{equation}
together with the mass identities of Theorem~\ref{thm:fric_monotone}.
\end{corollary}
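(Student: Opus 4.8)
The corollary is the vanishing--friction specialization of Theorem~\ref{thm:fric_brenier_stability}, read with fixed marginals $\mu^n\equiv\mu$, $\eta^n\equiv\eta$ and $f^n(v)=\alpha_n|v|+\beta_n v^2\to f^0\equiv 0$ pointwise. The only hypothesis of that theorem that is lost is the uniform convexity~\eqref{eq:uniform_convexity}, since $\partial_{vv}f^n=2\beta_n\downarrow 0$. My plan is therefore to re--run the proof of Theorem~\ref{thm:fric_brenier_stability}, replacing the three things~\eqref{eq:uniform_convexity} supplied there --- uniform integrability of the increments, identification of the weak limit, and the $L^1$ upgrade of the endpoint convergence --- by structural substitutes, and then to pass to the limit in the equal--slope identity~\eqref{eq:FOC_expanded}.

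\emph{Compactness and identification.} Since $\{\pi^{\star,n}\}_n\subset\Pi^M(\mu,\eta)$, whose marginals are fixed with finite $p\ge 2$ moments (Assumption~\ref{ass:convex_order}), the family is tight and, along a subsequence, $\pi^{\star,n}\rightharpoonup\bar\pi$ by Prokhorov; the martingale constraint passes to the limit exactly as in Part~1 of the proof of Theorem~\ref{thm:fric_brenier_stability} (the test integrand $\phi(x)(y-x)$ is continuous with linear growth, and the fixed marginals furnish a uniform first--moment bound, hence uniform integrability), so $\bar\pi\in\Pi^M(\mu,\eta)$. Each $\pi^{\star,n}$ is left--monotone by hypothesis --- consistently with Remark~\ref{rem:degenerate_MSM}(a), the rectangle inequality~\eqref{eq:rectangle_MSM} holding for the adjusted cost whenever $\beta_n>0$ --- and absence of crossing triples in the support is closed under weak convergence, equivalently a special case of the $W_1$--stability of the left--curtain coupling \cite{BJ2016,HLT2016,BackhoffVeraguasPammer2023}; hence $\bar\pi$ is left--monotone, and by uniqueness of the left--monotone martingale coupling of $(\mu,\eta)$ we obtain $\bar\pi=\pi^{\mathrm{lc}}$. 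This uniqueness is precisely what, for the degenerate limit $f^0\equiv 0$, plays the role that strict convexity of the limit cost plays in Theorem~\ref{thm:fric_brenier_stability}; since the limit is subsequence--independent, the full sequence converges.

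\emph{Endpoints, weights, and the equal--slope limit.} Fix an irreducible component $I$ of $\{\Delta F_t>0\}$ on which $\mu$ is atomless; as $\mu^n\equiv\mu$, the rearrangement $T^n$ of Theorem~\ref{thm:fric_brenier_stability} is the identity, so the claim reduces to $T_{d/u}^{(t),n}\to T_{d/u}^{(t),0}$. Helly's theorem extracts monotone a.e.\ limits of $T_d^{(t),n}$, $T_u^{(t),n}$, and passing to the limit in the mass identities~\eqref{eq:mass_conservation_fric}--\eqref{eq:coupling_identity_fric} --- whose data $F_\mu,F_\eta,\Delta F_t$ are now literally fixed --- identifies them with the disintegration of $\pi^{\mathrm{lc}}$, i.e.\ with $(T_d^{(t),0},T_u^{(t),0})$. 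The $L^1(\mu)$ upgrade uses the displacement bound~\eqref{eq:coercivity_disp}, which in Theorem~\ref{thm:fric_brenier_stability} came from~\eqref{eq:uniform_convexity} but here holds with a constant independent of $n$ by the martingale identity
\[
\int_{\R}\bigl(T_u^{(t),n}-x\bigr)^2\theta_t^n\,\mathrm d\mu+\int_{\R}\bigl(x-T_d^{(t),n}\bigr)^2(1-\theta_t^n)\,\mathrm d\mu=\int_{\R^2}(y-x)^2\,\mathrm d\pi^{\star,n}=\eta(y^2)-\mu(x^2);
\]
Vitali's theorem then gives $L^1(\mu)$ convergence of $T_{d/u}^{(t),n}$, and $\theta_t^n=(x-T_d^{(t),n})/(T_u^{(t),n}-T_d^{(t),n})$ converges in $L^1(\mu)$ by dominated convergence, the no--trade set $\{T_u^{(t),0}=T_d^{(t),0}\}$ being handled as in Part~4 of Theorem~\ref{thm:fric_brenier_stability}. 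Finally, on the trading part $T_d^{(t),n}(x)<x<T_u^{(t),n}(x)$ the arguments of $f^n$ avoid its kink, so $\partial_v f^n(v)=\alpha_n\,\mathrm{sgn}(v)+2\beta_n v$ is single--valued there and \eqref{eq:FOC_expanded} reads
\[
\partial_y\mathsf V_{t+1}\bigl(T_d^{(t),n}(x)\bigr)-\partial_y\mathsf V_{t+1}\bigl(T_u^{(t),n}(x)\bigr)=-2\alpha_n-2\beta_n\bigl(T_u^{(t),n}(x)-T_d^{(t),n}(x)\bigr);
\]
along the subsequence above the right--hand side tends to $0$ for $\mu$--a.e.\ $x$ (as $\alpha_n,\beta_n\downarrow 0$ and $T_u^{(t),n}-T_d^{(t),n}$ converges to a finite limit), and $\partial_y\mathsf V_{t+1}$ is continuous since $\mathsf V_{t+1}\in C^3$, so passing to the limit yields~\eqref{eq:touching_frictionless}, trivially on $\{T_d^{(t),0}=x=T_u^{(t),0}\}$; the accompanying mass identities of Theorem~\ref{thm:fric_monotone} pass to the limit as above.

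The crux is the loss of~\eqref{eq:uniform_convexity}: without a convexity floor, neither the a priori displacement estimate nor uniqueness of the limiting optimizer is available off the shelf. Both are restored structurally --- the estimate by the martingale identity $\int_{\R^2}(y-x)^2\,\mathrm d\pi=\eta(y^2)-\mu(x^2)$ valid on all of $\Pi^M(\mu,\eta)$, and the identification by the classical uniqueness of the frictionless left--curtain coupling \cite{BJ2016,HLT2016} --- with the closedness of left--monotonicity under weak limits as the remaining (standard) ingredient.
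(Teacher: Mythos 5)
Your proposal follows the same overall strategy as the paper's proof: tightness and Prokhorov give a weak subsequential limit in $\Pi^M(\mu,\eta)$, Helly's selection extracts monotone a.e.\ limits of $T_{d/u}^{(t),n}$, the mass identities and the equal--slope relation pass to the limit via $\alpha_n,\beta_n\downarrow 0$, and Vitali upgrades pointwise to $L^1(\mu)$ via a uniform $L^2$ displacement bound. The one substantive variation is in identifying $\bar\pi$ with $\pi^{\mathrm{lc}}$: the paper first derives the touching condition~\eqref{eq:touching_frictionless} and pairs it with $\mathsf V_{t+1}'''\ge\kappa>0$ to invoke uniqueness on each irreducible component, whereas you argue that left--monotonicity of the support passes to the weak limit (via the same Helly mechanism) and then appeal directly to the Beiglb\"ock--Juillet uniqueness of the left--monotone martingale coupling. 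Both reach the same conclusion; yours is slightly more economical at the identification step (it does not use the curvature of $\mathsf V_{t+1}$ there), and you derive the touching condition at the end purely as output, whereas the paper uses it as an ingredient. Your emphasis that the uniform convexity~\eqref{eq:uniform_convexity} of Theorem~\ref{thm:fric_brenier_stability} is lost ($2\beta_n\downarrow 0$) and must be replaced by the marginal--driven identity $\int(y-x)^2\,d\pi=\eta(y^2)-\mu(x^2)$, valid for every $\pi\in\Pi^M(\mu,\eta)$, makes explicit something the paper's proof also relies on but does not flag. One small phrasing quibble: ``closedness of left--monotonicity under weak limits'' and ``$W_1$--stability of the left--curtain coupling'' are related but not literally equivalent statements (the latter is continuity of the map $(\mu,\eta)\mapsto\pi^{\mathrm{lc}}$; the former is preservation of a support property at fixed marginals, and it needs the Helly extraction rather than raw weak convergence to be justified), but since you do carry out the Helly step, the argument is self--contained and correct.
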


\begin{proof}
Because the marginals \(\mu,\eta\) are fixed with finite second moments, the family \(\{\pi^{\star,n}\}_n\subset\Pi^M(\mu,\eta)\) is tight. Along a subsequence we have \(\pi^{\star,n}\rightharpoonup\bar\pi\in\Pi^M(\mu,\eta)\). For each \(n\), Theorem~\ref{thm:fric_monotone} provides a left–monotone support and a bi–atomic disintegration with endpoints \(T^{(t),n}_{d/u}\) that are respectively nonincreasing/nondecreasing on every irreducible component of \(\{\Delta F_t>0\}\). By Helly’s selection principle we may extract a further subsequence and Borel maps \(T_d^{(t)},T_u^{(t)}\) such that
\[
T^{(t),n}_{d/u}(x)\to T^{(t)}_{d/u}(x)\quad\text{for \(\mu\)-a.e.\ }x,
\]
with \(T_d^{(t)}\) nonincreasing and \(T_u^{(t)}\) nondecreasing on the active set.

For each \(n\), the itentities of Item~(3) Theorem~\ref{thm:fric_monotone} hold:
\[
\mathrm dF_\eta\bigl(T_u^{(t),n}(x)\bigr)=\theta_t^n(x)\,\mathrm dF_\mu(x),
\qquad
\mathrm d(\Delta F_t)\bigl(T_d^{(t),n}(x)\bigr)=-(1-\theta_t^n(x))\,\mathrm dF_\mu(x).
\]
Passing to the limit along the converging monotone graphs yields
\begin{equation}
\label{eq:mass_limit}
\mathrm dF_\eta\bigl(T_u^{(t)}(x)\bigr)=\theta_t(x)\,\mathrm dF_\mu(x),
\qquad
\mathrm d(\Delta F_t)\bigl(T_d^{(t)}(x)\bigr)=-(1-\theta_t(x))\,\mathrm dF_\mu(x),
\end{equation}
with \(\theta_t(x):=\lim_n\theta_t^n(x)\in[0,1]\) and the barycenter identity \(x=\theta_t T_u^{(t)}(x)+(1-\theta_t)T_d^{(t)}(x)\). Equivalently,
\[
F_\eta\bigl(T_u^{(t)}(x)\bigr)=F_\mu(x)+\Delta F_t\bigl(T_d^{(t)}(x)\bigr)
\quad\text{for \(\mu\)-a.e.\ }x,
\]
i.e., the coupling identity \eqref{eq:coupling_identity_fric} holds for the limit graphs.

On the other hand, for \(\mu\)-a.e.\ \(x\) in the active set, the equal–slope system \eqref{eq:FOC_expanded} reads
\[
\partial_y\mathsf V_{t+1}\bigl(T_d^{(t),n}(x)\bigr)
-\partial_v f^n\bigl(T_d^{(t),n}(x)-x\bigr)
=
\partial_y\mathsf V_{t+1}\bigl(T_u^{(t),n}(x)\bigr)
-\partial_v f^n\bigl(T_u^{(t),n}(x)-x\bigr).
\]
Since \(f^n(v)=\alpha_n|v|+\beta_n v^2\) with \(\alpha_n\downarrow0\), \(\beta_n\downarrow0\),
\(\bigl|\partial_v f^n(v)\bigr|\le \alpha_n+2\beta_n|v|\).
For any martingale coupling with marginals \((\mu,\eta)\),
\(\mathbb E_{\pi}\big[(Y-X)^2\big]=\mathbb E_\eta[Y^2]-\mathbb E_\mu[X^2]\),
so the displacements \(v_n(x):=T^{(t),n}_{u/d}(x)-x\) are uniformly square–integrable in \(n\). Consequently
\[
\int \bigl|\partial_v f^n\bigl(v_n(x)\bigr)\bigr|\,\mathrm d\mu(x)
\;\le\; \alpha_n + 2\beta_n\Big(\int |v_n(x)|^2\,\mathrm d\mu(x)\Big)^{1/2}\ \longrightarrow\ 0,
\]
and, along a subsequence, \(\partial_v f^n\bigl(v_n(x)\bigr)\to0\) for \(\mu\)-a.e.\ \(x\).
Letting \(n\to\infty\) in the equal–slope relation and using the continuity of \(\partial_y\mathsf V_{t+1}\) gives the frictionless touching condition
\[
\partial_y\mathsf V_{t+1}\bigl(T_d^{(t)}(x)\bigr)
=\partial_y\mathsf V_{t+1}\bigl(T_u^{(t)}(x)\bigr)
\quad\text{for \(\mu\)-a.e.\ }x,
\]
i.e., \eqref{eq:touching_frictionless}.

The identities in \eqref{eq:mass_limit} show that the limit graphs together with \(\theta_t\) generate a feasible left–monotone martingale coupling \(\bar\pi\) for \((\mu,\eta)\). The touching condition \eqref{eq:touching_frictionless} together with \(\mathsf V_{t+1}'''\ge\kappa>0\) singles out the frictionless left–curtain coupling (uniqueness on each irreducible component); hence \(\bar\pi=\pi^{\mathrm{lc}}\) and \((T_d^{(t)},T_u^{(t)})=(T_d^{(t),0},T_u^{(t),0})\) \(\mu\)-a.e. Monotonicity of the graphs, almost–everywhere convergence, and the uniform \(L^2\) bound on displacements yield \(L^1(\mu)\) convergence by Vitali’s theorem, and \(\pi^{\star,n}\rightharpoonup \pi^{\mathrm{lc}}\).
\end{proof}


\section{Explicit transport maps and trade bands}
\label{sec:bands_applications}

In the presence of trading frictions, optimal martingale couplings develop \emph{no–trade regions} (or \emph{trade bands}) on which the identity coupling is optimal and all mass remains on the diagonal. The mechanism is convex–analytic. Writing the friction in displacement form \(v:=y-x\) as \(f^{(a,b)}_t(x,v)\) and denoting by \(h_t(x)\) the dual Lagrange multiplier for the martingale constraint, the Fenchel–Young/KKT optimality conditions (see \eqref{eq:slack-equality-KKT}–\eqref{eq:conjugacy-KKT}) give
\[
v=0\quad\Longleftrightarrow\quad h_t(x)\in\partial_v f^{(a,b)}_t(x,0).
\]
This motivates the trade band
\[
B_t=\bigl\{x\in\R:\ h_t(x)\in\partial_v f^{(a,b)}_t(x,0)\bigr\}.
\]
Inside \(B_t\) the conditional law equals \(\delta_x\). Outside \(B_t\), movement occurs and the martingale constraint enforces a bi–atomic kernel supported on two monotone graphs (see \cref{thm:fric_monotone}); the endpoints are then selected by the equal–slope system \eqref{eq:FOC_equal_slopes}–\eqref{eq:FOC_expanded} together with the mass relations \eqref{eq:mass_conservation_fric}–\eqref{eq:coupling_identity_fric}. In the frictionless case \(f\equiv0\), there is no subdifferential dead zone at \(v=0\) and thus no band. The optimizer reduces to the frictionless left–curtain geometry on each irreducible component \cite{BJ2016,HLT2016}.

When \(f^{(a,b)}_t\) depends on \(x\), the band is state–dependent through the set–valued map \(x\mapsto\partial_v f^{(a,b)}_t(x,0)\). For instance, if \(f^{(a,b)}_t(x,v)=\alpha_t(x)\,|v|+\beta_t(x)\,v^2\) with \(\alpha_t,\beta_t\ge0\), then \(\partial_v f^{(a,b)}_t(x,0)=[-\alpha_t(x),\alpha_t(x)]\) and
\[
B_t=\{\,x\in\R:\ |h_t(x)|\le \alpha_t(x)\,\}.
\]
The width of the band therefore adjusts with liquidity/impact heterogeneity across \(x\). In the \(x\)–independent linear–quadratic model \(f_{\alpha,\beta}(v)=\alpha|v|+\beta v^2\), the subdifferential is \([-\alpha,\alpha]\), yielding \(B_t=\{\,|h_t|\le\alpha\,\}\); off the band, \eqref{eq:conjugacy-KKT} gives the explicit displacement rule \eqref{eq:relation-h-y-KKT}.

Band boundaries are free boundaries determined by the contact (equal–slope) system as their location governs where transport is active and where the coupling is the identity, thereby shaping both pricing functionals and hedging policies. Band geometry yields sharp comparative statics in the liquidity parameters: increasing a linear cost widens \(B_t\), while increasing a quadratic penalty shrinks off–band displacements (see \eqref{eq:conjugacy-KKT} and \eqref{eq:FOC_expanded}). Regularity and monotonicity of the band further underpin numerical stability and convergence, including the vanishing–friction regime of Corollary~\ref{cor:vanish}.

From a purely financial viewpoint, trade bands formalize threshold behaviour typical of market makers and high–frequency traders. Small price moves are not traded because expected gains do not offset costs, whereas sufficiently large moves trigger immediate rebalancing along the optimal up/down maps. This has direct implications for robust pricing of path–dependent claims. For lookbacks, wider bands dampen tail–pushing; for barriers, bands near the barrier reduce effective crossing under worst–case transport; for Asians, bands suppress frequent small rebalances and reduce short–horizon dispersion. The remainder of this section derives the dual/KKT system, specializes it to linear–quadratic frictions to obtain explicit off–band maps and band boundaries, and discusses how these features propagate to the pricing of such path–dependent contracts.

\subsection{Dual shift and KKT system}
\label{subsec:KKT}

Here we present the complementary–slackness and Fenchel–Young subgradient relations and derive the displacement rule (particulary for the linear–quadratic case).


\begin{lemma}
\label{lem:dual_shift_KKT}
Assume $\varphi,\psi:\R\to\R$ are Borel functions satisfying
\[
\varphi\in L^1(\mu), \qquad \psi\in L^1(\eta),
\]
and that $\varphi$ is convex and $\psi$ is concave.
Let $h:\R\to\R$ be Borel such that $x\mapsto f^{(a,b)\,*}_t(x,h(x))\in L^1(\mu)$.
Consider the one time–step primal problem \eqref{eq:onestep_objective_adjusted}. Define the shifted potentials
\[
\widehat{\varphi}(x):=\varphi(x)+\mathsf{V}_{t+1}(x),
\qquad
\widehat{\psi}(y):=\psi(y)-\mathsf{V}_{t+1}(y).
\]
Then the dual feasibility constraint for \eqref{eq:onestep_objective_adjusted} leads to
\[
\varphi(x)+\psi(y)+h(x)\,(y-x)\ \le\ \widetilde c_t(x,y)\qquad\forall x,y,
\]
which is equivalent to
\[
\widehat{\varphi}(x)+\widehat{\psi}(y)+h(x)\,(y-x)\ \le\ -\,f^{(a,b)}_t(x,y-x)\qquad\forall x,y.
\]
In particular, after the sign change $(\phi,\psi,h):=(-\widehat{\varphi},-\widehat{\psi},-h)$,
one obtains the standard one time–step martingale dual with pure friction cost,
\begin{equation}
\label{eq:dual-1step-KKT}
\sup_{\phi,\psi,h}\ \biggl\{
\int \phi\,\mathrm{d}\mu + \int \psi\,\mathrm{d}\eta:
\ \phi(x)+\psi(y)+h(x)\,(y-x)\ \le\ f^{(a,b)}_t(x,y-x)\ \ \forall x,y
\biggr\},
\end{equation}
and the dual objectives differ by the constant $\eta(\mathsf V_{t+1})-\mu(\mathsf V_{t+1})$:
\[
\int \varphi\,\mathrm d\mu+\int \psi\,\mathrm d\eta
\;=\;\Big(\int \phi\,\mathrm d\mu+\int \psi\,\mathrm d\eta\Big)\;+\;\eta(\mathsf V_{t+1})-\mu(\mathsf V_{t+1}).
\]
\end{lemma}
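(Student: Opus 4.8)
\noindent The plan is to treat the statement as a pointwise algebraic reparametrisation of the one time–step martingale dual, supplemented by a bookkeeping identity for the two objective functionals; no genuinely new analytic ingredient is needed beyond the integrability already carried by the standing assumptions.

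First I would substitute $\widetilde c_t(x,y)=\mathsf V_{t+1}(y)-\mathsf V_{t+1}(x)-f^{(a,b)}_t(x,y-x)$ into the dual feasibility inequality $\varphi(x)+\psi(y)+h(x)(y-x)\le\widetilde c_t(x,y)$ and use that $\mathsf V_{t+1}(y)-\mathsf V_{t+1}(x)$ is additively separable: moving its $x$–part onto $\varphi$ and its $y$–part onto $\psi$ turns the inequality, line by line, into
\[
\bigl(\varphi(x)+\mathsf V_{t+1}(x)\bigr)+\bigl(\psi(y)-\mathsf V_{t+1}(y)\bigr)+h(x)\,(y-x)\ \le\ -\,f^{(a,b)}_t(x,y-x),
\]
i.e.\ the constraint in the shifted variables $\widehat\varphi=\varphi+\mathsf V_{t+1}$, $\widehat\psi=\psi-\mathsf V_{t+1}$. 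Each step is reversible, so this is an honest bijection between the two feasible sets with $h$ unchanged. Performing the sign change $(\phi,\psi,h):=(-\widehat\varphi,-\widehat\psi,-h)$ then reverses the inequality — and, correspondingly, exchanges the minimisation over the adjusted–cost dual with the maximisation written in \eqref{eq:dual-1step-KKT} — leaving precisely the standard one time–step martingale transport dual for the pure friction cost $c(x,y)=f^{(a,b)}_t(x,y-x)$. Existence of an optimal triple $(\phi,\psi,h)$ for \eqref{eq:dual-1step-KKT} follows from the lower semicontinuity and superlinear coercivity of $f^{(a,b)}_t$ in the trading variable (Assumption~\ref{ass:fric-assump}) via Fenchel–Rockafellar, exactly as in the classical one–dimensional MOT duality of \cite{BeiglbockNutzTouzi2017}.

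Second, I would track the objectives along this bijection. Using $\varphi\in L^1(\mu)$, $\psi\in L^1(\eta)$ and $\mathsf V_{t+1}\in L^1(\mu)\cap L^1(\eta)$, the integrals split as $\int\widehat\varphi\,\mathrm d\mu=\int\varphi\,\mathrm d\mu+\mu(\mathsf V_{t+1})$ and $\int\widehat\psi\,\mathrm d\eta=\int\psi\,\mathrm d\eta-\eta(\mathsf V_{t+1})$, whence
\[
\int\varphi\,\mathrm d\mu+\int\psi\,\mathrm d\eta\ =\ \int\widehat\varphi\,\mathrm d\mu+\int\widehat\psi\,\mathrm d\eta\ +\ \eta(\mathsf V_{t+1})-\mu(\mathsf V_{t+1}),
\]
so that, after the sign change and the accompanying $\inf$–$\sup$ exchange, the two dual problems differ by the additive constant $\eta(\mathsf V_{t+1})-\mu(\mathsf V_{t+1})$; at the level of optimal values this is nothing but \eqref{eq:onestep_objective_adjusted}. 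Finally, complementary slackness transfers verbatim through the bijection: the contact equality $\varphi(x)+\psi(y)+h(x)(y-x)=\widetilde c_t(x,y)$ on $\operatorname{supp}\pi^\star$ holds if and only if $\phi(x)+\psi(y)+h(x)(y-x)=f^{(a,b)}_t(x,y-x)$ there, which is exactly the tightness relation driving the equal–slope step in the proof of Theorem~\ref{thm:fric_monotone}.

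The one point that genuinely needs care is the integrability $\mathsf V_{t+1}\in L^1(\mu_t)\cap L^1(\mu_{t+1})$ — required so the objective may be split term by term without creating an $\infty-\infty$. Integrability against $\mu_{t+1}$ is the easy half: the negative part of $\mathsf V_{t+1}$ is controlled by admissibility of $\Phi$ (Definition~\ref{def:admissible_payoff}(i)) and its positive part by the $p$–order growth of $\Phi$ together with the finite moments of the future marginals. Integrability against $\mu_t$ is the substantive half, and this is where the convex ordering $\mu_t\preceq_{\mathrm{cx}}\mu_{t+1}$ and the $p$–th moment condition of Assumption~\ref{ass:convex_order} are genuinely used, via the one–sided growth control that $\mathsf V_{t+1}$ inherits from $\Phi$. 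The convexity of $\varphi$ and concavity of $\psi$ play no role in the equivalence itself — indeed $\widehat\varphi=\varphi+\mathsf V_{t+1}$ is in general not convex — and are kept only because they delimit the regularity class in which the downstream KKT/equal–slope differentiation (subgradients of $\psi$ and of $v\mapsto f^{(a,b)}_t(x,v)$) is carried out; the remainder is line-by-line algebra.
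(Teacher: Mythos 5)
Your proposal is correct and takes essentially the same route as the paper's one‑line proof: substitute the definition of $\widetilde c_t$, absorb the separable term $\mathsf V_{t+1}(y)-\mathsf V_{t+1}(x)$ into $\varphi$ and $\psi$, perform the sign change, and carry the objective along the same reparametrisation.

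One point in your write-up is worth highlighting, since it exposes a sign slip in the lemma as printed. You correctly note that the reparametrisation $(\phi,\psi,h)\mapsto(-\widehat\varphi,-\widehat\psi,-h)$ \emph{reverses} the inequality. Starting from $\varphi(x)+\psi(y)+h(x)(y-x)\le\widetilde c_t(x,y)$, one lands on $\phi(x)+\psi(y)+h(x)(y-x)\ge f^{(a,b)}_t(x,y-x)$, not the $\le$ appearing in \eqref{eq:dual-1step-KKT}. Since \eqref{eq:dual-1step-KKT} is a $\sup$ with $\le$-constraint — the standard Kantorovich dual of the friction-\emph{minimisation} $\inf_\pi\int f^{(a,b)}_t\,\mathrm d\pi$ on the right of \eqref{eq:onestep_objective_adjusted} — the first displayed constraint in the lemma should read $\varphi(x)+\psi(y)+h(x)(y-x)\ge\widetilde c_t(x,y)$ (which is also the natural dual feasibility for the $\sup$-primal $\sup_\pi\int\widetilde c_t\,\mathrm d\pi$). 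With that correction, your derivation and the paper's agree line by line. Your further remarks — that $\mathsf V_{t+1}\in L^1(\mu_t)\cap L^1(\mu_{t+1})$ must be checked (via admissibility of $\Phi$, the $p$-moment bounds, and convex ordering) before splitting the objective, and that the convexity/concavity hypotheses on $(\varphi,\psi)$ are inert for this lemma — are both sound and useful additions that the paper's terse proof glosses over.
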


\begin{proof}
The proof is directly evident. Consider $\widetilde c_t$ in the constraint
$\varphi+\psi+h(\cdot)(y-x)\le \mathsf V_{t+1}(y)-\mathsf V_{t+1}(x)-f^{(a,b)}_t(x,y-x)$
and apply rearrangement to obtain
$\widehat{\varphi}+\widehat{\psi}+h(\cdot)(y-x)\le -f^{(a,b)}_t(x,y-x)$.
Setting $(\phi,\psi,h):=(-\widehat{\varphi},-\widehat{\psi},-h)$ yields the constraint in
\eqref{eq:dual-1step-KKT}. The stated objective shift follows from
$\int \widehat{\varphi}\,\mathrm d\mu=\int \varphi\,\mathrm d\mu+\mu(\mathsf V_{t+1})$ and
$\int \widehat{\psi}\,\mathrm d\eta=\int \psi\,\mathrm d\eta-\eta(\mathsf V_{t+1})$.
\end{proof}

Let $\pi^\ast$ be optimal for \eqref{eq:onestep_objective_adjusted} and
$(\phi,\psi,h)$ be optimal for \eqref{eq:dual-1step-KKT}.
Complementary slackness yields, for $\pi^\ast$–a.e.\ $(x,y)$,
\begin{equation}
\label{eq:slack-equality-KKT}
\phi(x)+\psi(y)+h(x)\,(y-x) \;=\; f^{(a,b)}_t    \left(x,\,y-x\right).
\end{equation}
By Fenchel–Young conjugacy in the trade increment $v:=y-x$,
\begin{equation}
\label{eq:conjugacy-KKT}
y-x \ \in\ \partial f^{(a,b)\,*}_t    \left(x,\,h(x)\right)
\qquad\Longleftrightarrow\qquad
h(x)\ \in\ \partial_v f^{(a,b)}_t    \left(x,\,y-x\right).
\end{equation}
We refer to $h_t:=h$ as the dual slope where it determines the trade band
via the condition $h_t(x)\in\partial_v f^{(a,b)}_t(x,0)$.

In the linear-quadratic case, For $f_{\alpha,\beta}(v)=\alpha|v|+\beta v^2$ (no $x$–dependence),
\[
\partial f_{\alpha,\beta}(v)=
\begin{cases}
\{\alpha\,\mathrm{sgn}(v)+2\beta v\}, & v\neq 0,\\[2pt]
[-\alpha,\alpha], & v=0,
\end{cases}
\]
and \eqref{eq:conjugacy-KKT} yields the displacement rule
\begin{equation}
\label{eq:relation-h-y-KKT}
\begin{cases}
|h(x)|<\alpha \ \Longrightarrow\ y=x,\\[4pt]
|h(x)|>\alpha \ \Longrightarrow\ y-x=\dfrac{h(x)-\alpha\,\mathrm{sgn}   \big(h(x)\big)}{2\beta}, \quad (\beta>0),\\[8pt]
|h(x)|=\alpha \ \Longrightarrow\ y-x\in\{0\}\ \text{or any limit consistent with }(\mu,\eta).
\end{cases}
\end{equation}
If $\beta=0$, the second line is absent, the band $|h(x)|\le \alpha$ enforces $y=x$ on its interior.

When $f^{(a,b)}_t$ is differentiable in $v$ at $v=y-x$ and $\psi$ is differentiable at $y$,
differentiating \eqref{eq:slack-equality-KKT} with respect to $y$ yields
\begin{equation}
\label{eq:psi-slope-KKT}
\psi'(y)\;+\;h(x)\;=\;\partial_v f^{(a,b)}_t   \left(x,\,y-x\right).
\end{equation}
In particular, for $f_{\alpha,\beta}$ and $y\neq x$,
\[
\psi'(y)=\alpha\,\mathrm{sgn}(y-x)+2\beta\,(y-x)-h(x),
\]
whereas at $y=x$ the relation holds in the sense of subgradients.

\begin{remark}[Connection to the global dual \eqref{eq:dual_problem}]
\label{rmk:local-to-global-dual}
The pathwise dual in \cref{subsec:dual_formulation} is written in terms of global, time–indexed
variables $(u_t,\Delta_t)_{t=0}^{N}$ via the functional $\Psi_{u,\Delta}$ in
\eqref{eq:Psi_dual_functional} and the pointwise superhedge constraint
\eqref{eq:dual_inequality}. By contrast, the one time–step dual in
\eqref{eq:dual-1step-KKT} is formulated at a fixed time $t$ with local variables
$(\varphi_t,\psi_t,h_t)$ after the dual shift of Lemma~\ref{lem:dual_shift_KKT}.
These two formulations are canonically equivalent once one sums the one–step
inequalities over $t$ and lets the continuation values $\mathsf V_{t+1}$ telescope.

More precisely, for each $t$ the shifted one tim–step constraint reads
\[
\widehat{\varphi}_t(x)+\widehat{\psi}_t(y)+h_t(x)\,(y-x)\ \le\ -\,f^{(a,b)}_t(x,y-x),
\qquad
\widehat{\varphi}_t:=\varphi_t+\mathsf V_{t+1},\quad
\widehat{\psi}_t:=\psi_t-\mathsf V_{t+1}.
\]
Evaluating at $(x,y)=(S_t,S_{t+1})$ and summing in $t$ yields the telescoping identity
\[
\sum_{t=0}^{N-1}\!\Big\{\varphi_t(S_t)+\psi_t(S_{t+1})
+h_t(S_t)\big(S_{t+1}-S_t\big)\Big\}
\;\le\;
-\,\sum_{t=0}^{N-1}\!f^{(a,b)}_t\big(S_t,S_{t+1}-S_t\big)
+\sum_{t=0}^{N-1}\!\Big\{\mathsf V_{t+1}(S_t)-\mathsf V_{t+1}(S_{t+1})\Big\}.
\]
Defining the global dual variables by the (non-unique) identification
\[
\Delta_t:=h_t,\qquad
u_t:=\varphi_t-\big(\mathsf V_t-\mathsf V_{t+1}\big)\ \ (t=0,\dots,N-1),\qquad
u_N:=\lambda,\ \ \nu(\lambda):=\mu_N(\lambda),
\]
and using Fenchel--Young, $\,\Delta_t(S_{t+1}-S_t)-f^{(a,b)\,*}_t(S_t,\Delta_t)\le
-\,f^{(a,b)}_t(S_t,S_{t+1}-S_t)$, we recover the global pathwise inequality
\eqref{eq:dual_inequality} with $\Psi_{u,\Delta}$ as in \eqref{eq:Psi_dual_functional}.
Conversely, given $(u_t,\Delta_t)$ satisfying \eqref{eq:dual_inequality}, one can set
$h_t:=\Delta_t$ and choose $(\varphi_t,\psi_t)$ so that the one–step constraint
\eqref{eq:dual-1step-KKT} holds together with the shift of Lemma~\ref{lem:dual_shift_KKT}. 
The freedom in $(\varphi_t,\psi_t)$ corresponds to adding telescope–neutral terms.

Thus, the global dual \eqref{eq:dual_problem} is exactly the time–aggregation of the
one–step duals \eqref{eq:dual-1step-KKT} after the dual shift. The potentials
$u_t$ encode the static (semi–static) legs once the continuation values are absorbed,
while $\Delta_t$ is the predictable trading strategy. 
\end{remark}

\subsection{Geometry of trade bands}
\label{subsec:band_geometry}

Let $h_t$ be an optimal dual slope for the one time–step dual after the shift in \eqref{eq:dual-1step-KKT}.
Define the trade band
\begin{equation}
\label{eq:band_def_repeat}
B_t:=\bigl\{x\in\R:\ h_t(x)\in \partial_v f^{(a,b)}_t(x,0)\bigr\}.
\end{equation}
In the linear–quadratic case $f_{\alpha,\beta}(v)=\alpha|v|+\beta v^2$ (no $x$–dependence), $\partial_v f_{\alpha,\beta}(0)=[-\alpha,\alpha]$, hence $B_t=\{\,|h_t|\le\alpha\,\}$.

We work on a fixed irreducible component $I$ of $\{\Delta F_t>0\}$ under the hypotheses of \Cref{thm:fric_monotone}. On $I$ there exist Borel maps
\[
T_d^{(t)}\le \mathrm{Id}\le T_u^{(t)},\qquad
T_d^{(t)}\ \text{nonincreasing},\quad T_u^{(t)}\ \text{nondecreasing},
\]
and weights $\theta_t\in[0,1]$ such that the conditional law is bi–atomic as in \eqref{eq:selection_system}. We take the right–continuous versions of $T_d^{(t)}$ and $T_u^{(t)}$.

\begin{lemma}
\label{lem:band_identity}
On $I$, for $\mu$–a.e.\ $x$,
\[
x\in B_t\quad\Longleftrightarrow\quad T_d^{(t)}(x)=T_u^{(t)}(x)=x.
\]
In particular,
\[
B_t\cap I=\bigl\{x\in I:\ T_u^{(t)}(x)=x\bigr\}
=\bigl\{x\in I:\ T_d^{(t)}(x)=x\bigr\}.
\]
\end{lemma}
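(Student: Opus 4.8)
The plan is to read both implications off the KKT relations \eqref{eq:slack-equality-KKT}--\eqref{eq:conjugacy-KKT} attached to the one time--step optimizer. Recall from the proof of \cref{thm:fric_monotone} (via \cref{lem:dual_shift_KKT}) that there exists an optimal dual triple $(\phi,\psi,h)$ for \eqref{eq:dual-1step-KKT} with $h=h_t$, and that complementary slackness on $\supp\pi^\star$ together with Fenchel--Young conjugacy gives, for $\pi^\star$--a.e.\ $(x,y)$, $y-x\in\partial f^{(a,b)\,*}_t(x,h_t(x))$, equivalently $h_t(x)\in\partial_v f^{(a,b)}_t(x,y-x)$. Disintegrating $\pi^\star$ along the bi--atomic kernel of \cref{thm:fric_monotone} on $I$ and using a standard measurable--selection argument, this promotes to: for $\mu$--a.e.\ $x\in I$,
\[
T_u^{(t)}(x)-x\in\partial f^{(a,b)\,*}_t\bigl(x,h_t(x)\bigr)\qquad\text{and}\qquad T_d^{(t)}(x)-x\in\partial f^{(a,b)\,*}_t\bigl(x,h_t(x)\bigr).
\]
The implication $T_d^{(t)}(x)=T_u^{(t)}(x)=x\ \Rightarrow\ x\in B_t$ is then immediate: the support of $\pi^\star(\cdot\mid x)$ is the single point $x$, so the conjugacy relation above holds at increment $v=0$, i.e.\ $h_t(x)\in\partial_v f^{(a,b)}_t(x,0)$, which is precisely the defining condition \eqref{eq:band_def_repeat} for $x\in B_t$.

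For the converse, suppose $x\in B_t\cap I$, so $0\in\partial f^{(a,b)\,*}_t(x,h_t(x))$ while, by the display above, $T_u^{(t)}(x)-x$ and $T_d^{(t)}(x)-x$ also lie in the closed interval $\partial f^{(a,b)\,*}_t(x,h_t(x))$. I would identify this interval via the explicit conjugate of \cref{lem:conjugate_state}. If $b_t(x)>0$, then $y\mapsto f^{(a,b)\,*}_t(x,y)=(|y|-a_t(x))_+^2/(4\,b_t(x))$ is $C^1$, hence the subdifferential is the singleton $\{0\}$ and therefore $T_u^{(t)}(x)-x=T_d^{(t)}(x)-x=0$. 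If $b_t(x)=0$, then $f^{(a,b)\,*}_t(x,\cdot)=\iota_{[-a_t(x),\,a_t(x)]}$; at an interior band point $|h_t(x)|<a_t(x)$ the subdifferential is again $\{0\}$, giving the same conclusion, while at a boundary point $h_t(x)=a_t(x)$ (resp.\ $-a_t(x)$) it equals $[0,\infty)$ (resp.\ $(-\infty,0]$), so intersecting with $T_d^{(t)}\le\mathrm{Id}\le T_u^{(t)}$ forces $T_d^{(t)}(x)=x$ (resp.\ $T_u^{(t)}(x)=x$), and the martingale barycenter $x=\theta_t(x)\,T_u^{(t)}(x)+(1-\theta_t(x))\,T_d^{(t)}(x)$ collapses the remaining weight to $0$ or $1$, so $\pi^\star(\cdot\mid x)=\delta_x$ and the degeneracy convention of \cref{thm:fric_monotone} lets us take $T_u^{(t)}(x)=T_d^{(t)}(x)=x$. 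An equivalent, more elementary route avoids $f^{*}$: $h_t(x)$ being a common subgradient of $v\mapsto f^{(a,b)}_t(x,v)$ at $v_d:=T_d^{(t)}(x)-x\le 0$, at $0$, and at $v_u:=T_u^{(t)}(x)-x\ge 0$ forces $f^{(a,b)}_t(x,\cdot)$ to be affine with slope $h_t(x)$ on $[v_d,v_u]$, which is impossible for $a_t(x)|v|+b_t(x)v^2$ (strictly convex if $b_t(x)>0$, kinked at $v=0$ if $a_t(x)>0$) unless $v_d=v_u=0$, up to the same edge bookkeeping.

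Finally, the two displayed set identities follow by combining the equivalence just proved with $T_d^{(t)}\le\mathrm{Id}\le T_u^{(t)}$ and the barycenter relation: $T_u^{(t)}(x)=x$ forces $(1-\theta_t(x))(x-T_d^{(t)}(x))=0$, hence $T_d^{(t)}(x)=x$ unless $\theta_t(x)=1$, in which case the kernel is $\delta_x$ and the convention again yields $T_d^{(t)}(x)=x$; the reverse implication is symmetric. Thus, up to $\mu$--null sets on $I$, $B_t\cap I=\{x\in I:T_u^{(t)}(x)=x\}=\{x\in I:T_d^{(t)}(x)=x\}$. The step I expect to be the main obstacle is the converse at the band boundary $|h_t(x)|=a_t(x)$ when strict convexity fails (the degenerate MSM situation of \cref{rem:degenerate_MSM}), where $\partial f^{(a,b)\,*}_t(x,h_t(x))$ is a half--line rather than $\{0\}$ and one must argue through the martingale barycenter and the $\delta_x$ convention rather than by pure convex analysis; under the standing strict convexity assumption $b_t(x)\ge\underline{b}>0$ on $I$ the subdifferential is a singleton and this subtlety disappears.
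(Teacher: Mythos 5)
Your proof is correct and rests on the same KKT/Fenchel--Young mechanism that the paper uses: complementary slackness forces $h_t(x)\in\partial_v f^{(a,b)}_t(x,y-x)$ at every support point, and the forward implication $T_d=T_u=x\Rightarrow x\in B_t$ is read off by evaluating this at $v=0$. Where you go further than the paper is in the converse. The paper's argument for the strictly convex case invokes ``the subgradient is single--valued there'' (which, read literally, is imprecise: $f_{\alpha,\beta}(v)=\alpha|v|+\beta v^2$ with $\alpha>0$ is strictly convex yet has a nondegenerate subdifferential at $0$), and in the purely proportional regime simply appeals to ``choosing this canonical selection on $B_t$.'' You instead dualize the condition to $0\in\partial f^{(a,b)\,*}_t(x,h_t(x))$, use the explicit conjugate of Lemma~\ref{lem:conjugate_state} to see that $\partial f^{*}$ is a singleton when $b_t(x)>0$, and handle the $b_t(x)=0$ boundary case ($|h_t(x)|=a_t(x)$) explicitly via the one--sided structure of $\partial\iota_{[-a,a]}$, the inequality $T_d\le\mathrm{Id}\le T_u$, and the martingale barycenter. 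Your alternative route (a common subgradient at $v_d\le 0\le v_u$ forces affineness on $[v_d,v_u]$, impossible for $a|v|+bv^2$ unless $v_d=v_u=0$) is also a clean self-contained argument that avoids the conjugate entirely. So: same approach, but your write-up is tighter precisely at the degenerate proportional boundary, which is the step the paper glosses over.
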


\begin{proof}
Suppose $x\in B_t$, so $h_t(x)\in\partial_v f^{(a,b)}_t(x,0)$. By the pointwise Fenchel–Young optimality equality for the dual pair \((f^{(a,b)}_t,\,f^{(a,b)\,*}_t)\) (refer \cref{subsec:KKT}), together with \eqref{eq:slack-equality-KKT}–\eqref{eq:conjugacy-KKT}, any support point $(x,y)$ of $\pi^\star$ must satisfy $h_t(x)\in\partial_v f^{(a,b)}_t\bigl(x,y-x\bigr)$. For fixed $x$, the subdifferential map $v\mapsto\partial_v f^{(a,b)}_t(x,v)$ is monotone. If $v\mapsto f^{(a,b)}_t(x,v)$ is strictly convex in a neighborhood of $0$, then the subgradient is single–valued there and we deduce $y-x=0$, hence $\pi^\star(\cdot\mid x)=\delta_x$ and $T_d^{(t)}(x)=T_u^{(t)}(x)=x$. In the proportional (purely linear) regime in $v$, the subgradient at $0$ is an interval. Equality in Fenchel–Young at $v=0$ shows that $(x,x)$ is a contact point, and choosing this canonical selection on $B_t$ again yields $\pi^\star(\cdot\mid x)=\delta_x$ and $T_d^{(t)}(x)=T_u^{(t)}(x)=x$.

Conversely, if $T_u^{(t)}(x)=T_d^{(t)}(x)=x$, then $(x,x)$ belongs to the contact set of the dual inequality. Evaluating the Fenchel–Young equality at $v=0$ gives $h_t(x)\in\partial_v f^{(a,b)}_t(x,0)$, hence $x\in B_t$.
\end{proof}

\begin{prop}[Structure of $B_t$ and off–band maps]
\label{prop:band_structure}
Let $I$ be an irreducible component of $\{\Delta F_t>0\}$ and assume the hypotheses of Theorem~\ref{thm:fric_monotone}. Then:
\begin{enumerate}
\item $B_t\cap I$ is a Borel subset of $I$, and for every $x\in B_t$ one has $T_d^{(t)}(x)=T_u^{(t)}(x)=x$. If, in addition, the monotone representatives $T_d^{(t)},T_u^{(t)}$ are taken right–continuous (as we do) or $x\mapsto h_t(x)$ is continuous, then $B_t\cap I$ is relatively closed in $I$.
\item $I\setminus B_t$ is a countable union of open intervals. On each such open interval $J\subset I\setminus B_t$, the maps satisfy $T_d^{(t)}(x)<x<T_u^{(t)}(x)$ for all $x\in J$, with $T_d^{(t)}$ strictly decreasing and $T_u^{(t)}$ strictly increasing on $J$.

In the linear–quadratic case $f_{\alpha,\beta}(v)=\alpha|v|+\beta v^2$ with $\beta>0$ (no $x$–dependence),
\begin{equation}
\label{eq:outside_affine_repeat}
T_{u/d}^{(t)}(x)
\;=\;x+\frac{h_t(x)-\alpha\,\mathrm{sgn}\big(h_t(x)\big)}{2\beta},
\qquad x\notin B_t,
\end{equation}
which is the specialization of the KKT relation \eqref{eq:relation-h-y-KKT}.
\end{enumerate}
\end{prop}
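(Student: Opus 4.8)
The plan is to extract both items directly from \Cref{thm:fric_monotone}, \Cref{lem:band_identity}, and the KKT/conjugacy relations \eqref{eq:slack-equality-KKT}--\eqref{eq:relation-h-y-KKT} of \Cref{subsec:KKT}, reading the trade band as the contact set of the shifted one time--step dual \eqref{eq:dual-1step-KKT}. Throughout, $\pi_t^\star$ is a left--monotone optimizer as in \Cref{thm:fric_monotone}, and $T_d^{(t)},T_u^{(t)}$ denote its right--continuous monotone representatives on the fixed irreducible component $I$.

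For item (1), Borel measurability of $B_t\cap I$ is immediate: $h_t$ is Borel and, in the linear--quadratic model, $\partial_v f^{(a,b)}_t(x,0)=[-\alpha_t(x),\alpha_t(x)]$ with $\alpha_t$ Borel, so $B_t=\{x:\ |h_t(x)|\le\alpha_t(x)\}$; for a general convex $f^{(a,b)}_t$ one uses that $x\mapsto\bigl[\partial_v^- f^{(a,b)}_t(x,0),\partial_v^+ f^{(a,b)}_t(x,0)\bigr]$ has Borel graph, being squeezed between one--sided difference quotients, so the membership condition is Borel. The identification $T_d^{(t)}(x)=T_u^{(t)}(x)=x$ for \emph{every} $x\in B_t$ is precisely \Cref{lem:band_identity}, whose proof is pointwise in $x$ through the Fenchel--Young equality at $v=0$ and hence applies to all $x\in B_t$, not merely $\mu$--a.e. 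For relative closedness: if $x\mapsto h_t(x)$ is continuous, then $B_t$ is the preimage under $x\mapsto(x,h_t(x))$ of the set $\{(x,p):\ p\in\partial_v f^{(a,b)}_t(x,0)\}$, which is closed by semicontinuity of the one--sided $v$--derivatives at $0$ (constant bounds $[-\alpha,\alpha]$ in the canonical case), so $B_t\cap I$ is relatively closed. Under the weaker hypothesis that we only have right--continuous monotone representatives, one instead shows $I\setminus B_t$ is relatively open: if $x_0\in I\setminus B_t$ then, by \Cref{lem:band_identity}, $T_d^{(t)}(x_0)=:y_-<x_0<y_+:=T_u^{(t)}(x_0)$, and for $x\in(x_0,y_+)\cap I$ the left--monotone support forces both conditional atoms at $x$ out of $(y_-,y_+)$; combined with the martingale barycentre identity this gives $T_d^{(t)}(x)\le y_-<x<y_+\le T_u^{(t)}(x)$, hence $x\in I\setminus B_t$, and a complementary argument using right--continuity of the monotone representatives supplies the left--neighbourhood.

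For item (2), once $B_t\cap I$ is relatively closed the set $I\setminus B_t$ is a relatively open subset of the interval $I$, hence a countable disjoint union of open intervals $J$; on each such $J$ every $x$ satisfies, by \Cref{lem:band_identity}, $T_d^{(t)}(x)<x<T_u^{(t)}(x)$, so the kernel is genuinely bi--atomic with $\theta_t(x)\in(0,1)$. Strict monotonicity of the branches comes from the mass--conservation identities \eqref{eq:mass_conservation_fric} together with the equal--slope relation \eqref{eq:FOC_equal_slopes}: a flat piece $T_u^{(t)}\equiv y^\ast$ on a sub--interval of positive $\mu$--mass would, via $\mathrm dF_\eta\circ T_u^{(t)}=\theta_t\,\mathrm dF_\mu$ with $\theta_t\in(0,1)$, deposit a positive atom of $\eta$ at $y^\ast$, while the equal--slope identity at the fixed level $y^\ast$ combined with the strict $x$--curvature $\partial_x\bigl[\partial_y\widetilde c_t(\cdot,y^\ast)\bigr]>0$ implied by \eqref{eq:rectangle_MSM} forces $T_d^{(t)}$ (hence $\theta_t$) to move in a way incompatible with the barycentre and mass constraints; the symmetric argument with $\mathrm d(\Delta F_t)\circ T_d^{(t)}=-(1-\theta_t)\,\mathrm dF_\mu$ excludes flats of $T_d^{(t)}$. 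Thus $T_u^{(t)}$ is strictly increasing and $T_d^{(t)}$ strictly decreasing on $J$. Finally, in the linear--quadratic case $f_{\alpha,\beta}(v)=\alpha|v|+\beta v^2$ with $\beta>0$, the conjugacy relation \eqref{eq:conjugacy-KKT} evaluated off the band (where $v=T^{(t)}(x)-x\ne0$, so $\partial_v f_{\alpha,\beta}(v)=\alpha\,\mathrm{sgn}(v)+2\beta v$ is single--valued) equates $2\beta\bigl(T^{(t)}(x)-x\bigr)+\alpha\,\mathrm{sgn}\bigl(T^{(t)}(x)-x\bigr)$ with the relevant dual slope, which is exactly the displacement rule \eqref{eq:relation-h-y-KKT} and rearranges to \eqref{eq:outside_affine_repeat}, the two branches $T_u^{(t)},T_d^{(t)}$ corresponding to the two sign regimes of the displacement.

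The main obstacle is the strict monotonicity in item (2), together with the right--continuous--representative version of closedness in item (1): in each, $T_u^{(t)}$ (resp.\ $T_d^{(t)}$) may a priori carry flats or jumps, and excluding them at the level of the monotone representatives requires carefully intertwining the left--monotone geometry, the equal--slope system, and the mass--conservation identities, and---for flats---ruling out atoms of $\mu_{t+1}$ (resp.\ of $\Delta F_t$) at the corresponding levels. Under the standing atomlessness of $\mu$ on $I$ and strict convexity of $v\mapsto f^{(a,b)}_t(x,v)$ assumed in \Cref{thm:fric_monotone} these exclusions go through, but this is the part where the bookkeeping is most delicate; everything else (measurability, the identity on $B_t$, the open--interval decomposition, and the affine off--band formula) is a direct transcription of \Cref{lem:band_identity} and the KKT relations of \Cref{subsec:KKT}.
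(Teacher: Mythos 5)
Your proof tracks the paper closely on the formula~\eqref{eq:outside_affine_repeat}, the Borel measurability, the appeal to Lemma~\ref{lem:band_identity}, and the open--interval decomposition, but you take a genuinely different route on the two delicate points, and one of them has a concrete gap.

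For the relative closedness under right--continuity alone (item~1), your left--monotone argument cleanly handles the \emph{right} neighbourhood of $x_0\in I\setminus B_t$: the two atoms $(x_0,y_\pm)$ and left--monotonicity force $T_d^{(t)}(x)\le y_-<x$ for $x\in(x_0,y_+)\cap I$, hence $x\notin B_t$. The ``complementary argument using right--continuity'' for the \emph{left} neighbourhood is, however, not supplied and is not automatic: right--continuity of a nondecreasing $T_u^{(t)}$ does not control behaviour from the left, and $T_u^{(t)}(x)=x$ on a left interval with a jump to $T_u^{(t)}(x_0)>x_0$ at $x_0$ is right--continuous and compatible with monotonicity. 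The paper's own proof simply invokes right--continuity of $x\mapsto T_u^{(t)}(x)-x$ and asserts closedness of its zero set; your version adds a different mechanism for one side but still leaves the left side open.

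For strict monotonicity of $T_u^{(t)}$ and $T_d^{(t)}$ on $J$ (item~2), the paper's argument is purely geometric: if $T_u^{(t)}(x_1)=T_u^{(t)}(x_2)=y^\ast$ for $x_1<x_2\in J$, the four support points $(x_1,T_d^{(t)}(x_1))$, $(x_1,y^\ast)$, $(x_2,T_d^{(t)}(x_2))$, $(x_2,y^\ast)$ form a crossing configuration incompatible with the left--monotone support of Theorem~\ref{thm:fric_monotone}. You replace this by an analytic argument combining the mass identity $\mathrm dF_\eta\circ T_u^{(t)}=\theta_t\,\mathrm dF_\mu$ with the equal--slope condition and a curvature claim. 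Two issues arise. First, the observation that a flat would ``deposit a positive atom of $\eta$ at $y^\ast$'' does not by itself contradict anything, since $\mu_{t+1}=\eta$ is not assumed atomless on $I$ (only $\mu=\mu_t$ is). Second, and more seriously, the claimed ``strict $x$--curvature $\partial_x\bigl[\partial_y\widetilde c_t(\cdot,y^\ast)\bigr]>0$ implied by \eqref{eq:rectangle_MSM}'' is not what the MSM hypothesis gives: $\partial_x\partial_y\widetilde c_t$ is a second--order mixed derivative, whereas the smooth MSM criterion \eqref{eq:MSM_smooth} controls $\partial_{xyy}\widetilde c_t$, a third--order quantity; these are genuinely different conditions and the MSM does not license the one you invoke. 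The paper's left--monotone rectangle argument sidesteps this entirely and is the cleaner route; if you wish to keep the analytic approach, you would need to rework the curvature step around the actual $\partial_{xyy}$ criterion, which is more delicate than what you have written.
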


\begin{proof}
By Lemma~\ref{lem:band_identity}, $B_t=\{x:\,T_u^{(t)}(x)=T_d^{(t)}(x)=x\}$ on $I$. Since $T_{u/d}^{(t)}$ are Borel and monotone, the set $\{T_u^{(t)}=x\}$ is Borel in $I$, the same holds for $T_d^{(t)}$. If we take the right–continuous representatives of the monotone maps, then $x\mapsto T_u^{(t)}(x)-x$ is right–continuous on $I$ and $\{T_u^{(t)}=x\}$ is relatively closed, the same applies to $T_d^{(t)}$. This proves the first item.

For the second item, the complement of a relatively closed subset of an open interval is a countable union of open intervals. Fix an open $J\subset I\setminus B_t$. By Lemma~\ref{lem:band_identity}, $T_u^{(t)}(x)>x$ and $T_d^{(t)}(x)<x$ for all $x\in J$. Suppose $T_u^{(t)}$ were not strictly increasing on $J$. Then there exist $x_1<x_2$ in $J$ with $T_u^{(t)}(x_1)=T_u^{(t)}(x_2)=:y^\ast$. Using the bi–atomic structure (Theorem~\ref{thm:fric_monotone}) at $x_1,x_2$ and the fact that $T_d^{(t)}$ is nonincreasing, the four points
\[
(x_1,T_d^{(t)}(x_1)),\ (x_1,y^\ast),\ (x_2,T_d^{(t)}(x_2)),\ (x_2,y^\ast)
\]
form a crossing rectangle, which contradicts left–monotonicity of the support (Theorem~\ref{thm:fric_monotone}). Hence $T_u^{(t)}$ is strictly increasing on $J$. The argument for $T_d^{(t)}$ is analogous. The formula \eqref{eq:outside_affine_repeat} in the linear quadratic case is the KKT displacement rule \eqref{eq:relation-h-y-KKT}.
\end{proof}

\begin{prop}[Comparative statics in $(\alpha,\beta)$ in the linear–quadratic case]
\label{prop:comparative_statics}
Assume $f_{\alpha,\beta}(v)=\alpha|v|+\beta v^2$ with $\beta>0$. Fix an irreducible component $I$ of $\{\Delta F_t>0\}$. Suppose that:

\begin{enumerate}
\item The measures $\mu$ and $\eta$ are absolutely continuous on the relevant ranges with continuous densities $f_\mu,f_\eta$ that are strictly positive there.
\item On each open interval $J\subset I\setminus B_t$, the selection system consisting of the mass identity and the equal–slope condition admits a unique solution $(T_d^{(t)},T_u^{(t)},\theta_t)$ which depends continuously on $(\alpha,\beta)$, and $\mathsf V_{t+1}''$ is continuous and bounded on the images $T_d^{(t)}(J)$ and $T_u^{(t)}(J)$.
\item (Sign and curvature conditions) Along $J$ one has
\[
\Delta F_t'(T_d^{(t)}(x))<0
\quad\text{and}\quad
f_\eta\bigl(T_u^{(t)}(x)\bigr)>0
\qquad\text{for a.e.\ }x\in J,
\]
and the curvature dominance
\[
2\beta>\sup\{\mathsf V_{t+1}''(y):\, y\in T_d^{(t)}(J)\cup T_u^{(t)}(J)\}.
\]
\end{enumerate}
Then:
\begin{enumerate}
\item The band expands weakly in each parameter: if $\alpha_2\ge\alpha_1$, then $B_t(\alpha_1,\beta)\cap I\subseteq B_t(\alpha_2,\beta)\cap I$, and if $\beta_2\ge\beta_1$, then $B_t(\alpha,\beta_1)\cap I\subseteq B_t(\alpha,\beta_2)\cap I$.
\item On each $J\subset I\setminus B_t$, the off–band displacements decrease with either parameter:
\[
\frac{\partial}{\partial \alpha}\,|T_{u/d}^{(t)}(x)-x|\ \le\ 0,
\qquad
\frac{\partial}{\partial \beta}\,|T_{u/d}^{(t)}(x)-x|\ \le\ 0
\quad\text{for a.e.\ }x\in J.
\]
\end{enumerate}
\end{prop}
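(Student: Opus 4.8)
The plan is to reduce both assertions to a monotonicity statement for the endpoint maps $T_u^{(t)},T_d^{(t)}$ in the parameters $(\alpha,\beta)$, and to obtain that monotonicity by implicit differentiation of the selection system of hypothesis~(2). Specialising the equal--slope relation \eqref{eq:FOC_expanded} to $f_{\alpha,\beta}(v)=\alpha|v|+\beta v^2$, on an active interval $J\subset I\setminus B_t$ (where by \cref{prop:band_structure} $T_d^{(t)}<\mathrm{Id}<T_u^{(t)}$) the pair of endpoints solves, for $\mu$--a.e. $x\in J$,
\[
G_1\bigl(T_u,T_d;\alpha,\beta,x\bigr):=\mathsf V_{t+1}'(T_u)-\mathsf V_{t+1}'(T_d)-2\alpha-2\beta\bigl(T_u-T_d\bigr)=0,
\]
together with the mass/coupling identity \eqref{eq:coupling_identity_fric}, $G_2\bigl(T_u,T_d;x\bigr):=F_\eta(T_u)-F_\mu(x)-\Delta F_t(T_d)=0$, which carries \emph{no} dependence on $(\alpha,\beta)$; the weight $\theta_t$ is then fixed by the barycenter relation. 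By hypothesis~(2) this system has a unique solution, and it is $C^1$ in $(\alpha,\beta)$ under the smoothness assumed in (2)--(3). By \cref{lem:band_identity}, $x\in B_t\cap I\iff T_u^{(t)}(x)=T_d^{(t)}(x)=x$; hence part~(1) follows once we show $T_u^{(t)}$ is nonincreasing and $T_d^{(t)}$ nondecreasing in each of $\alpha$ and $\beta$ (extended by $\mathrm{Id}$ off the active set), and part~(2) is the quantitative form of the same on $J$.

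The key step is the $2\times2$ linearisation. Differentiating $G_1=G_2=0$ with respect to $\alpha$ at fixed $(\beta,x)$ gives
\[
\begin{pmatrix}\mathsf V_{t+1}''(T_u)-2\beta & 2\beta-\mathsf V_{t+1}''(T_d)\\[2pt] f_\eta(T_u) & -\,\Delta F_t'(T_d)\end{pmatrix}
\begin{pmatrix}\partial_\alpha T_u\\[2pt]\partial_\alpha T_d\end{pmatrix}
=\begin{pmatrix}2\\[2pt]0\end{pmatrix}.
\]
Here the curvature dominance $2\beta>\sup\{\mathsf V_{t+1}''(y):y\in T_d^{(t)}(J)\cup T_u^{(t)}(J)\}$ from hypothesis~(3) makes the $(1,1)$ entry strictly negative and the $(1,2)$ entry strictly positive, while the density/sign conditions in~(3) give $f_\eta(T_u)>0$ and $-\Delta F_t'(T_d)>0$. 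Hence the matrix $M$ has $\det M<0$, is invertible, and Cramer's rule yields $\partial_\alpha T_u=-2\Delta F_t'(T_d)/\det M<0$ and $\partial_\alpha T_d=-2 f_\eta(T_u)/\det M>0$. Replacing the right--hand side by $(2(T_u-T_d),0)^{\top}$, whose first component is $>0$ since $T_u>T_d$ on $J$, the identical computation gives $\partial_\beta T_u<0<\partial_\beta T_d$. (If the relation that pins the monotone graphs in hypothesis~(2) is actually differential in $x$ rather than algebraic, the same differentiation produces a linear ODE for $(\partial_\alpha T_u,\partial_\alpha T_d)(\cdot)$ with a source term of definite sign, and one concludes via a comparison principle; the curvature dominance again supplies the sign of the linearised coefficient.)

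Part~(2) is then immediate: on $J$ one has $|T_u^{(t)}(x)-x|=T_u^{(t)}(x)-x$ and $|T_d^{(t)}(x)-x|=x-T_d^{(t)}(x)$, so the signs just obtained give $\partial_\alpha|T_{u/d}^{(t)}(x)-x|\le0$ and $\partial_\beta|T_{u/d}^{(t)}(x)-x|\le0$. For part~(1), integrating these sign inequalities along the parameter segment from $\alpha_1$ to $\alpha_2$ (resp. $\beta_1$ to $\beta_2$) and invoking the continuity and unique solvability in hypothesis~(2) gives $T_u^{(t)}(\cdot;\alpha_2,\beta)\le T_u^{(t)}(\cdot;\alpha_1,\beta)$ and $T_d^{(t)}(\cdot;\alpha_2,\beta)\ge T_d^{(t)}(\cdot;\alpha_1,\beta)$ on $I$; if $x\in B_t(\alpha_1,\beta)\cap I$, then $T_u^{(t)}(x;\alpha_1,\beta)=x=T_d^{(t)}(x;\alpha_1,\beta)$, and since always $T_d^{(t)}\le\mathrm{Id}\le T_u^{(t)}$, we get $T_u^{(t)}(x;\alpha_2,\beta)=T_d^{(t)}(x;\alpha_2,\beta)=x$, i.e. $x\in B_t(\alpha_2,\beta)$; the $\beta$ case is identical, and transport is trivial off the active set.

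The hard part will be the globalisation in the last paragraph: the active set $I\setminus B_t$ — hence the domain on which the selection system lives — itself moves with $(\alpha,\beta)$, so one cannot simply integrate a pointwise derivative over a fixed domain. I would handle this by a continuity/connectedness argument: for fixed $x$, the set $\{\alpha:x\in B_t(\alpha,\beta)\}$ is an up--set, because on the relatively open active region $\alpha\mapsto T_u^{(t)}(x;\alpha,\beta)$ is strictly decreasing by the previous step, while by the joint continuity assumed in hypothesis~(2) — and, for perturbed marginals, by \cref{thm:fric_brenier_stability} — the map $T_u^{(t)}(x;\cdot)$ cannot re--enter the region $\{T_u>x\}$ once it has attained the value $x$; symmetrically for $T_d^{(t)}$ and for $\beta$. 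A secondary technical point is that hypotheses~(2)--(3) (unique solvability of the selection system, density positivity, curvature dominance) must be assumed to persist along the whole parameter segment, not only at its endpoints; one either postulates this uniformly on the segment or localises near the reference parameters, using the stability of the endpoint construction in \cref{thm:fric_brenier_stability}.
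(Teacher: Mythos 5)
Your proof is correct and uses essentially the same mechanism as the paper's: linearise the two-equation selection system (equal-slope condition plus mass/coupling identity) in $(T_d,T_u)$, check that the curvature dominance and sign hypotheses make the $2\times 2$ Jacobian invertible with negative determinant, and read off the parameter derivatives by Cramer's rule. Your matrix $M$ is the paper's $J(x)$ with rows and columns simultaneously permuted (you order $(G_1,G_2)$ and $(T_u,T_d)$, the paper orders $(H_1,H_2)$ and $(T_d,T_u)$), which preserves the determinant and yields identical sign conclusions: $\partial_\alpha T_u\le 0\le \partial_\alpha T_d$ and likewise for $\beta$ after scaling the source term by $T_u-T_d>0$.

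Two remarks. First, you are actually more careful than the paper in one place: you explicitly identify the globalisation issue in part (1) — that the active domain $I\setminus B_t$ itself moves with $(\alpha,\beta)$, so one cannot naively integrate a pointwise derivative over a fixed interval — and you sketch a continuity/up-set argument to close the gap, together with the caveat that hypotheses (2)--(3) must persist along the parameter segment. The paper dispatches this step with a single clause (``the zero set expands weakly''). Second, the paper's proof contains a small slip in identifying $a_d:=-\Delta F_t'(T_d)$ with a \emph{density} difference $f_\mu(T_d)-f_\eta(T_d)$; since $\Delta F_t'=F_\eta-F_\mu$ by \eqref{eq:deltaF_derivative}, it should be the CDF difference $F_\mu(T_d)-F_\eta(T_d)$. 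You avoid this slip by invoking hypothesis (3) directly without rewriting $-\Delta F_t'$ in terms of densities — the sign $a_d>0$ is what matters and is given by the hypothesis, so the argument is unaffected either way.
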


\begin{proof}
Work on a fixed open $J\subset I\setminus B_t$ and write $T_{d/u}$ for $T_{d/u}^{(t)}$. By Theorem~\ref{thm:fric_monotone} and the potential formalism, $(T_d,T_u)$ satisfy on $J$ the mass identity
\begin{equation}
\label{eq:CS-mass}
F_\eta\bigl(T_u(x)\bigr)-F_\mu(x)=\Delta F_t\bigl(T_d(x)\bigr),
\end{equation}
and the linear-quadratic equal–slope condition
\begin{equation}
\label{eq:CS-slope}
\mathsf V_{t+1}'\bigl(T_u(x)\bigr)-\mathsf V_{t+1}'\bigl(T_d(x)\bigr)
=2\alpha+2\beta\bigl(T_u(x)-T_d(x)\bigr).
\end{equation}
Set $z=(T_d,T_u)$ and define
\[
H_1(x,z):=F_\eta(T_u)-F_\mu(x)-\Delta F_t(T_d),\qquad
H_2(z;\alpha,\beta):=\mathsf V_{t+1}'(T_u)-\mathsf V_{t+1}'(T_d)-2\alpha-2\beta\,(T_u-T_d).
\]
By the hypotheses and the chain rule, for a.e.\ $x\in J$,
\[
\partial_{T_d}H_1= -\Delta F_t'(T_d)=f_\mu(T_d)-f_\eta(T_d)=:a_d>0,\qquad
\partial_{T_u}H_1= f_\eta(T_u)=:a_u>0,
\]
and
\[
\partial_{T_d}H_2= -\mathsf V_{t+1}''(T_d)+2\beta,\qquad
\partial_{T_u}H_2= \mathsf V_{t+1}''(T_u)-2\beta.
\]
Hence the Jacobian
\[
J(x):=\frac{\partial(H_1,H_2)}{\partial(T_d,T_u)}
=\begin{pmatrix}
a_d & a_u\\[2pt]
2\beta-\mathsf V_{t+1}''(T_d) & \mathsf V_{t+1}''(T_u)-2\beta
\end{pmatrix}
\]
is invertible on $J$, with
\[
\det J(x)=a_d\bigl(\mathsf V_{t+1}''(T_u)-2\beta\bigr)-a_u\bigl(2\beta-\mathsf V_{t+1}''(T_d)\bigr)<0,
\]
by the curvature dominance. Differentiating the system $(H_1,H_2)=(0,0)$ with respect to $\alpha$ (keeping $x$ fixed) yields
\[
J(x)\begin{pmatrix}\partial_\alpha T_d\\ \partial_\alpha T_u\end{pmatrix}=\begin{pmatrix}0\\ 2\end{pmatrix}.
\]
Cramer’s rule gives
\[
\partial_\alpha T_d=\frac{-\,2\,a_u}{\det J}\ \ge\ 0,\qquad
\partial_\alpha T_u=\frac{2\,a_d}{\det J}\ \le\ 0,
\]
so both displacements $x-T_d$ and $T_u-x$ are weakly decreasing in $\alpha$. The same computation with respect to $\beta$ gives
\[
J(x)\begin{pmatrix}\partial_\beta T_d\\ \partial_\beta T_u\end{pmatrix}=\begin{pmatrix}0\\ 2\,(T_u-T_d)\end{pmatrix}
\quad\Longrightarrow\quad
\partial_\beta T_d=\frac{-\,2\,(T_u-T_d)\,a_u}{\det J}\ \ge\ 0,\qquad
\partial_\beta T_u=\frac{2\,(T_u-T_d)\,a_d}{\det J}\ \le\ 0,
\]
and the same monotonicity in $\beta$ follows. Finally, $B_t\cap I=\{x:\ T_u(x)=x\}=\{x:\ T_d(x)=x\}$ by \cref{lem:band_identity}, since both $T_u-x$ and $x-T_d$ decrease pointwise with either parameter on $J$ and vanish on $B_t$, the zero set expands weakly with $\alpha$ or $\beta$, proving band expansion.
\end{proof}

\begin{remark}[Quantitative identities on and off the band]
\label{rem:band_quantitative}
On $B_t$ the optimizer keeps mass on the diagonal, so $v=y-x=0$ and the adjusted one time–step contribution equals $-\,f^{(a,b)}_t(x,0)$, which is $0$ under the standard normalization $f^{(a,b)}_t(x,0)=0$. On $I\setminus B_t$, the contribution at $x$ is
\[
\theta_t(x)\Bigl(\mathsf{V}_{t+1}\bigl(T_u^{(t)}(x)\bigr)-\mathsf{V}_{t+1}(x)-f^{(a,b)}_t\bigl(x,\,T_u^{(t)}(x)-x\bigr)\Bigr)
+\bigl(1-\theta_t(x)\bigr)\Bigl(\mathsf{V}_{t+1}\bigl(T_d^{(t)}(x)\bigr)-\mathsf{V}_{t+1}(x)-f^{(a,b)}_t\bigl(x,\,T_d^{(t)}(x)-x\bigr)\Bigr),
\]
with $\theta_t(x)=\dfrac{x-T_d^{(t)}(x)}{T_u^{(t)}(x)-T_d^{(t)}(x)}$ as in \eqref{eq:selection_system}. The endpoints $T_{u/d}^{(t)}$ are determined by the mass relations \eqref{eq:mass_conservation_fric} and the equal–slope condition \eqref{eq:FOC_expanded}. In the linear-quadratic case one can further use \eqref{eq:outside_affine_repeat} to bound or evaluate the off–band terms explicitly.
\end{remark}

\begin{figure}[t]
  \centering
  \includegraphics[width=0.7\textwidth,trim=0.2cm 0.2cm 0.2cm 1cm,clip=true]{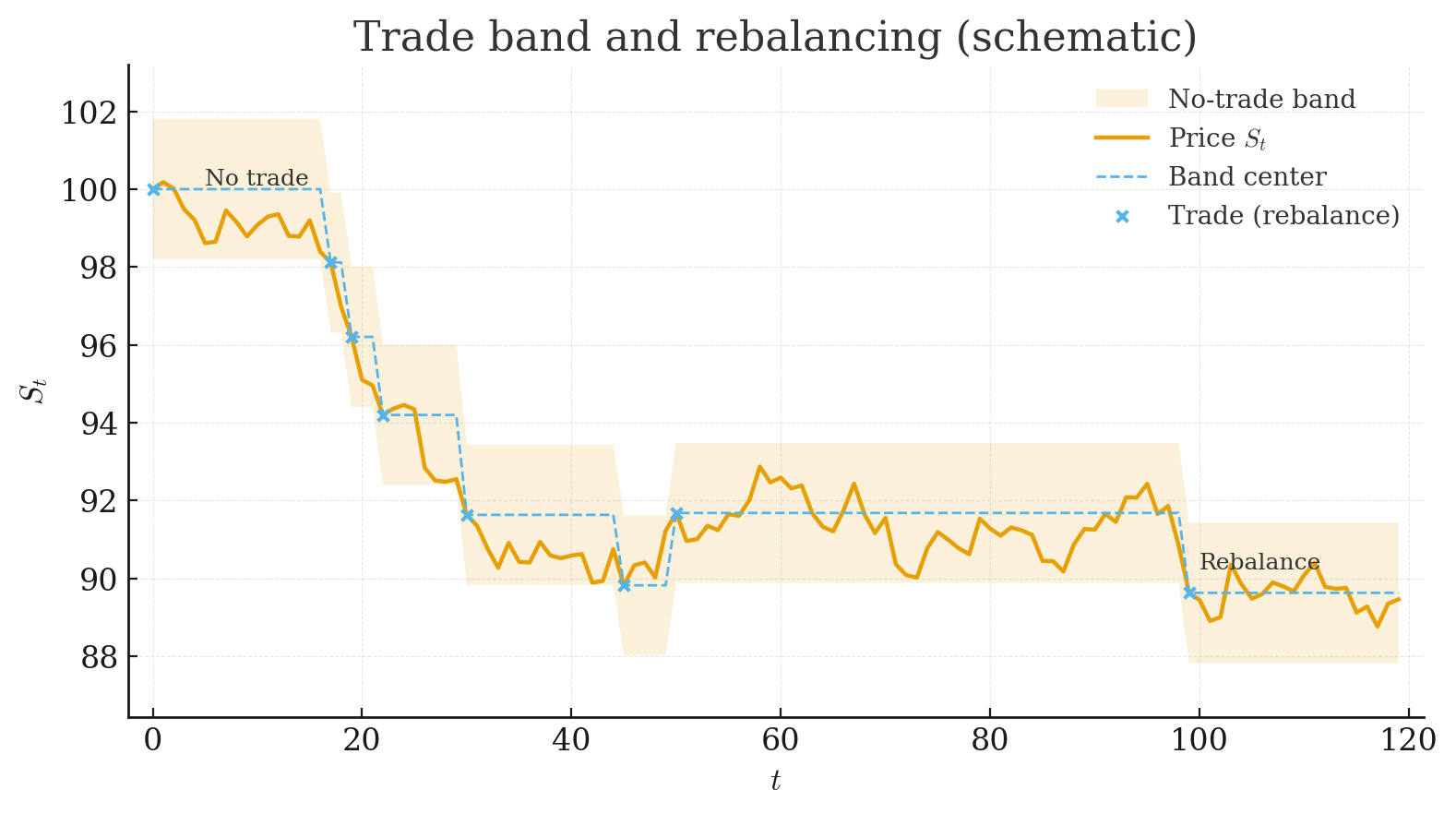}
  \vspace{4pt}
\caption{Schematic illustration of the no–trade region and optimal rebalancing under frictions. The orange line shows the price process \(S_t\) evolving within the state–dependent trade band \(B_t\) defined by \eqref{eq:band_def_repeat}.
    While \(S_t\) remains inside \(B_t\), the optimal martingale coupling coincides with the identity—no rebalancing occurs and the portfolio position is held fixed.
    When \(S_t\) exits the band, a trade is triggered (blue markers), corresponding to an update along the monotone graphs \(T_d^{(t)}\) and \(T_u^{(t)}\) described in \cref{thm:fric_monotone}.
    The band width reflects the local liquidity or impact cost parameters \((\alpha_t,\beta_t)\). Widening \(\alpha_t\) increases the inaction region, whereas larger \(\beta_t\) reduces off–band displacements.
    Financially, \(B_t\) captures the threshold behaviour of market makers: small price fluctuations are absorbed without trading, while sufficiently large moves trigger optimal rebalancing.}
\label{fig:trade-band}
\end{figure}

\subsection{Impact on high-frequency trading}
\label{subsec:financial_impact}

We next quantify the operational impact that are directly relevant to high-frequency execution and market-making.

\begin{prop}[Turnover representation and bounds]
\label{prop:turnover}
Let $I$ be an irreducible component of $\{\Delta F_t>0\}$, and let $\pi^\star_t$ be the frictional left–monotone optimizer with binomial kernel \eqref{eq:selection_system}. The one time–step (absolute) turnover is
\[
\mathcal{T}_t
:=\mathbb{E}_{\mu}\Big[\mathbb{E}\big[\,|Y-X|\,\big|\,X\,\big]\Big]
= \int_{I\setminus B_t}  \Big(\theta_t(x)\,\big|T_u^{(t)}(x)-x\big|+\big(1-\theta_t(x)\big)\,\big|x-T_d^{(t)}(x)\big|\Big)\,\mathrm d\mu(x).
\]
Moreover, on $I\setminus B_t$,
\begin{equation}
\label{eq:turnover_pointwise}
\mathbb{E}\big[\,|Y-X|\,\big|\,X=x\,\big]
=2\,\theta_t(x)\big(1-\theta_t(x)\big)\,\Big(T_u^{(t)}(x)-T_d^{(t)}(x)\Big)
\ \le\ \frac{1}{2}\,\Big(T_u^{(t)}(x)-T_d^{(t)}(x)\Big),
\end{equation}
and consequently
\begin{equation}
\label{eq:turnover_bound}
\mathcal{T}_t\ \le\ \frac{1}{2}\int_{I\setminus B_t}  \Big(T_u^{(t)}(x)-T_d^{(t)}(x)\Big)\,\mathrm d\mu(x).
\end{equation}
In the linear–quadratic case with $\beta>0$ and $f_{\alpha,\beta}(v)=\alpha|v|+\beta v^2$,
\[
\big|T_{u/d}^{(t)}(x)-x\big|=\frac{\big(|h_t(x)|-\alpha\big)_+}{2\beta},
\]
hence
\begin{equation}
\label{eq:turnover_LQ_bound}
\mathbb{E}\big[\,|Y-X|\,\big|\,X=x\,\big]
\ \le\ \frac{\big(|h_t(x)|-\alpha\big)_+}{2\beta},
\qquad
\mathcal{T}_t\ \le\ \frac{1}{2\beta}\int_{I}\big(|h_t(x)|-\alpha\big)_+\,\mathrm d\mu(x),
\end{equation}
where $(\cdot)_+=\max\{\cdot,0\}$.
\end{prop}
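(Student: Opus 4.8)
The plan is to disintegrate $\pi^\star_t$ along $X$, read off $\mathbb{E}\bigl[\,|Y-X|\mid X=x\,\bigr]$ from the bi–atomic kernel \eqref{eq:selection_system}, and then integrate against $\mu$; the two bounds are elementary consequences of the weight formula $\theta_t(x)=\tfrac{x-T_d^{(t)}(x)}{T_u^{(t)}(x)-T_d^{(t)}(x)}$ together with $\theta(1-\theta)\le\tfrac14$. First I would split the line using \Cref{lem:band_identity}: on $B_t\cap I$ the conditional law is $\delta_x$, so $|Y-X|=0$; off the active set $\{\Delta F_t=0\}$ the optimizer is the identity by \Cref{thm:fric_monotone} (and in fact $\{\Delta F_t=0\}\subseteq B_t$, since evaluating the Fenchel–Young equality \eqref{eq:slack-equality-KKT}–\eqref{eq:conjugacy-KKT} at $v=0$ forces $h_t(x)\in\partial_v f^{(a,b)}_t(x,0)$); and on $I\setminus B_t$ the kernel has two atoms with $T_d^{(t)}(x)\le x\le T_u^{(t)}(x)$. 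Hence, for $x\in I\setminus B_t$,
\[
\mathbb{E}\big[\,|Y-X|\,\big|\,X=x\,\big]
=\theta_t(x)\,\big|T_u^{(t)}(x)-x\big|+\big(1-\theta_t(x)\big)\,\big|x-T_d^{(t)}(x)\big|,
\]
and integrating against $\mu$, with the integrand vanishing on $B_t$ and off the active set, yields the stated representation of $\mathcal T_t$. Finiteness is immediate since $\mathbb{E}_\mu[\,|Y-X|\,]\le\mathbb{E}_{\pi^\star_t}[\,|Y-X|\,]\le\bigl(\mathbb{E}_\eta[Y^2]-\mathbb{E}_\mu[X^2]\bigr)^{1/2}<\infty$ under \Cref{ass:convex_order}.

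For the pointwise identity \eqref{eq:turnover_pointwise} I would substitute $x-T_d^{(t)}(x)=\theta_t(x)\bigl(T_u^{(t)}(x)-T_d^{(t)}(x)\bigr)$ and $T_u^{(t)}(x)-x=\bigl(1-\theta_t(x)\bigr)\bigl(T_u^{(t)}(x)-T_d^{(t)}(x)\bigr)$ into the line above, which collapses it to $2\,\theta_t(x)\bigl(1-\theta_t(x)\bigr)\bigl(T_u^{(t)}(x)-T_d^{(t)}(x)\bigr)$. The elementary estimate $\theta(1-\theta)=\tfrac14-(\theta-\tfrac12)^2\le\tfrac14$ for $\theta\in[0,1]$ then gives $\mathbb{E}[\,|Y-X|\mid X=x\,]\le\tfrac12\bigl(T_u^{(t)}(x)-T_d^{(t)}(x)\bigr)$, and integrating this inequality over $I\setminus B_t$ produces \eqref{eq:turnover_bound}.

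For the linear–quadratic specialization I would invoke \Cref{prop:band_structure}: off the band the displacement rule \eqref{eq:outside_affine_repeat} (equivalently the KKT relation \eqref{eq:relation-h-y-KKT}) gives $T_{u/d}^{(t)}(x)-x=\tfrac{h_t(x)-\alpha\,\mathrm{sgn}(h_t(x))}{2\beta}$, and since $x\notin B_t=\{|h_t|\le\alpha\}$ one has $\bigl|h_t(x)-\alpha\,\mathrm{sgn}(h_t(x))\bigr|=|h_t(x)|-\alpha=(|h_t(x)|-\alpha)_+$, so $\bigl|T_{u/d}^{(t)}(x)-x\bigr|=\tfrac{(|h_t(x)|-\alpha)_+}{2\beta}$; this identity also holds trivially on $B_t$ with both sides zero. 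Substituting into the representation, $\mathbb{E}[\,|Y-X|\mid X=x\,]$ is a convex combination of terms each equal to $\tfrac{(|h_t(x)|-\alpha)_+}{2\beta}$, hence bounded by it, which is \eqref{eq:turnover_LQ_bound}'s pointwise claim; integrating and using that $(|h_t|-\alpha)_+$ vanishes on $B_t$ to enlarge the domain of integration from $I\setminus B_t$ to $I$ gives the asserted bound on $\mathcal T_t$.

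I expect no serious obstacle: after the disintegration, the whole statement reduces to the algebraic identity $\theta_t(T_u^{(t)}-x)+(1-\theta_t)(x-T_d^{(t)})=2\theta_t(1-\theta_t)(T_u^{(t)}-T_d^{(t)})$ and the estimate $\theta(1-\theta)\le\tfrac14$. The only point that needs a little care is the bookkeeping in the first step — that turnover is carried entirely by $I\setminus B_t$ — which rests on \Cref{lem:band_identity} and the inclusion $\{\Delta F_t=0\}\subseteq B_t$; and, for the equality exactly as stated, on the implicit restriction to the single irreducible component $I$, the general turnover being the sum of the component–wise contributions.
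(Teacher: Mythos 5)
Your proposal is correct and takes essentially the same route as the paper's proof: disintegrate via the bi–atomic kernel \eqref{eq:selection_system}, observe the conditional law is $\delta_x$ on $B_t$ (Lemma~\ref{lem:band_identity}), collapse the conditional expectation through the martingale balance $x-T_d^{(t)}=\theta_t(T_u^{(t)}-T_d^{(t)})$ and $T_u^{(t)}-x=(1-\theta_t)(T_u^{(t)}-T_d^{(t)})$ to obtain $2\theta_t(1-\theta_t)(T_u^{(t)}-T_d^{(t)})$, apply $\theta(1-\theta)\le\tfrac14$, and invoke the KKT displacement rule \eqref{eq:relation-h-y-KKT} for the linear–quadratic case. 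Your extra remark that $\{\Delta F_t=0\}\subseteq B_t$ is a helpful clarification of the bookkeeping but does not change the argument.
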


\begin{proof}
Using \eqref{eq:selection_system}, conditional on $X=x$,
\[
\mathbb{E}\big[\,|Y-X|\,\big|\,X=x\,\big]
=\theta_t(x)\,\big|T_u^{(t)}(x)-x\big|+\big(1-\theta_t(x)\big)\,\big|x-T_d^{(t)}(x)\big|.
\]
On $B_t$ we have $T_d^{(t)}(x)=T_u^{(t)}(x)=x$, so the conditional mean vanishes. On $I\setminus B_t$, $T_d^{(t)}(x)\le x\le T_u^{(t)}(x)$ and the martingale constraint gives
\[
\theta_t(x)\big(T_u^{(t)}(x)-x\big)=(1-\theta_t(x))\big(x-T_d^{(t)}(x)\big)=:\mathfrak m(x).
\]
Hence the conditional mean equals $2\,\mathfrak m(x)$, and using
$T_u^{(t)}(x)-x=(1-\theta_t(x))\big(T_u^{(t)}(x)-T_d^{(t)}(x)\big)$ and
$x-T_d^{(t)}(x)=\theta_t(x)\big(T_u^{(t)}(x)-T_d^{(t)}(x)\big)$ yields \eqref{eq:turnover_pointwise}. Since $\theta_t(1-\theta_t)\le1/4$, the inequality in \eqref{eq:turnover_pointwise} follows. Integrating over $I\setminus B_t$ gives \eqref{eq:turnover_bound}. In the linear-quadratic case, the displacement identity is \eqref{eq:relation-h-y-KKT}, which implies \eqref{eq:turnover_LQ_bound}.
\end{proof}

\begin{prop}[Comparative statics of turnover]
\label{prop:turnover_CS}
Assume $v\mapsto f^{(a,b)}_t(x,v)$ is $C^2$ in $v$, strictly convex with $\partial_{vv}f^{(a,b)}_t(x,v)\ge m>0$, and that the equal–slope system \eqref{eq:FOC_expanded} is nondegenerate on $I\setminus B_t$. Then the directional derivatives of $\mathcal{T}_t$ with respect to the friction parameters satisfy
\[
D_\alpha \mathcal{T}_t \le 0,
\qquad
D_\beta \mathcal{T}_t \le 0.
\]
In particular, increasing the half–spread $\alpha$ or the quadratic penalty $\beta$ weakly reduces optimal one time–step turnover.
\end{prop}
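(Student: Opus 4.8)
The plan is to collapse the turnover onto a single scalar field that is manifestly monotone in the friction parameters. By \Cref{prop:turnover}, on $I\setminus B_t$ the conditional mean displacement equals $2\,\mathfrak m(x)$, where
\[
\mathfrak m(x):=\theta_t(x)\bigl(T_u^{(t)}(x)-x\bigr)=\bigl(1-\theta_t(x)\bigr)\bigl(x-T_d^{(t)}(x)\bigr)
=\frac{\bigl(x-T_d^{(t)}(x)\bigr)\bigl(T_u^{(t)}(x)-x\bigr)}{T_u^{(t)}(x)-T_d^{(t)}(x)}
=\Bigl(\tfrac{1}{x-T_d^{(t)}(x)}+\tfrac{1}{T_u^{(t)}(x)-x}\Bigr)^{-1},
\]
and $\mathfrak m\equiv0$ on $B_t\cap I$ (where $T_d^{(t)}=T_u^{(t)}=\mathrm{Id}$). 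Thus $\mathcal T_t=2\int_I\mathfrak m\,\mathrm d\mu$ with $\mathfrak m\ge0$ Borel on all of $I$. The crucial structural fact is that $\mathfrak m$ is (up to the factor $2$) the harmonic mean of the two one–sided displacements $p(x):=x-T_d^{(t)}(x)\ge0$ and $q(x):=T_u^{(t)}(x)-x\ge0$, and the map $(p,q)\mapsto pq/(p+q)$ is nondecreasing in each argument on $[0,\infty)^2$ and vanishes when either argument does. Hence it suffices to show that $p$ and $q$ are each weakly decreasing in $\alpha$ and in $\beta$, pointwise in $x$.

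I would obtain this pointwise monotonicity by the implicit–function argument already carried out in \Cref{prop:comparative_statics}, now in the strictly convex setting. Fix an open component $J\subset I\setminus B_t$. By \Cref{thm:fric_monotone} the pair $(T_d^{(t)},T_u^{(t)})$ solves on $J$ the mass identity \eqref{eq:coupling_identity_fric} together with the equal–slope relation \eqref{eq:FOC_expanded}; the nondegeneracy hypothesis means precisely that the $2\times2$ Jacobian of this system in $(T_d^{(t)},T_u^{(t)})$ is invertible (its determinant sign being controlled by $\partial_{vv}f^{(a,b)}_t\ge m>0$ dominating $\mathsf V_{t+1}''$ on the image ranges), so $(T_d^{(t)},T_u^{(t)})$ depend $C^1$ on $(\alpha,\beta)$ there. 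Differentiating the system and applying Cramer's rule exactly as in \Cref{prop:comparative_statics} — using that raising $\alpha$ shifts $\partial_v f^{(a,b)}_t(x,\cdot)$ upward for positive displacements (and symmetrically downward for negative ones), while raising $\beta$ shifts it upward proportionally to the displacement — yields $\partial_\alpha T_d^{(t)}\ge0\ge\partial_\alpha T_u^{(t)}$ and $\partial_\beta T_d^{(t)}\ge0\ge\partial_\beta T_u^{(t)}$ on $J$, i.e.\ $\partial_\alpha p\le0$, $\partial_\alpha q\le0$, $\partial_\beta p\le0$, $\partial_\beta q\le0$. These inequalities hold on each of the countably many components $J$ of $I\setminus B_t$, and by \Cref{lem:band_identity} together with \Cref{prop:comparative_statics} the band $B_t\cap I$ itself grows weakly with either parameter.

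Combining the two steps: for a.e.\ $x\in I$ the value $\mathfrak m(x)$ is weakly decreasing in $\alpha$ and in $\beta$ — on points that remain in the active set this is monotonicity of the harmonic mean in $(p,q)$, and on points that cross into the expanding band $\mathfrak m$ drops to $0$, again a decrease. Therefore $\mathcal T_t=2\int_I\mathfrak m\,\mathrm d\mu$ is a monotone nonincreasing function of $\alpha$ and of $\beta$, so its one–sided directional derivatives satisfy $D_\alpha\mathcal T_t\le0$ and $D_\beta\mathcal T_t\le0$. If one prefers the differentiated form, write $\mathcal T_t=2\int_{I\setminus B_t}\mathfrak m\,\mathrm d\mu$ and differentiate under the integral: the contribution of the moving boundary $\partial B_t$ vanishes because $\mathfrak m$ is continuous there with $\mathfrak m=0$ (as $p,q\to0$ at band endpoints, by right–continuity of the monotone maps), and the interchange of derivative and integral is justified by the uniform quadratic displacement bound of the type \eqref{eq:coercivity_disp} together with the $C^1$ dependence above; this gives $D_\alpha\mathcal T_t=2\int_{I\setminus B_t}\partial_\alpha\mathfrak m\,\mathrm d\mu\le0$ and likewise for $\beta$.

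The main obstacle is the sign bookkeeping in the general implicit–function computation. Outside the linear–quadratic model one must verify that the Jacobian determinant keeps a fixed sign — this is exactly what "nondegeneracy of the equal–slope system" encodes, namely $\partial_{vv}f^{(a,b)}_t$ beating $\mathsf V_{t+1}''$ on the relevant image ranges — and that the $\alpha$– and $\beta$–derivatives of the equal–slope residual carry the signs that make Cramer's rule produce $\partial_\alpha q\le0$, $\partial_\beta q\le0$ and the mirror statements for $p$. Once these signs are secured, the remaining ingredients — monotonicity of the harmonic mean, vanishing of the boundary term, and the passage from pointwise monotonicity to $D_\alpha\mathcal T_t,D_\beta\mathcal T_t\le0$ — are routine.
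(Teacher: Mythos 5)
Your argument is in substance the paper's: reduce via Proposition~\ref{prop:turnover} to a pointwise integrand, invoke the implicit function theorem on the equal–slope/mass system to obtain $\partial_\alpha T_d\ge 0\ge\partial_\alpha T_u$ (and likewise in $\beta$), use Proposition~\ref{prop:comparative_statics} for weak band expansion, and integrate. Where you add genuine value is in spelling out the step the paper only gestures at (``using the barycenter identity''): writing the conditional displacement as twice the half–harmonic mean
\[
\mathfrak m(x)=\theta_t(1-\theta_t)\bigl(T_u^{(t)}-T_d^{(t)}\bigr)=\frac{p\,q}{p+q},\qquad p:=x-T_d^{(t)},\ \ q:=T_u^{(t)}-x,
\]
and observing that $(p,q)\mapsto pq/(p+q)$ is nondecreasing in each argument and vanishes when either does, immediately converts the pointwise monotonicity of $p$ and $q$ into pointwise monotonicity of the integrand on all of $I$ (including points absorbed into the expanding band). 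This is exactly the right way to exploit the barycenter identity and makes the paper's assertion transparent. Two minor points. First, both proofs carry the same caveat: the Cramer/sign analysis that produces $\partial_\alpha p\le 0$, $\partial_\alpha q\le 0$ is only written out in the linear–quadratic case in Proposition~\ref{prop:comparative_statics}; for general $C^2$ strictly convex frictions the ``nondegeneracy'' hypothesis is meant to package the curvature–dominance of $\partial_{vv}f^{(a,b)}_t$ over $\mathsf V_{t+1}''$, and you are right to flag that the sign bookkeeping there is not automatic. Second, your final remark that the moving–boundary contribution vanishes because $\mathfrak m$ is continuous and equal to zero on $\partial B_t$ is a slight overclaim — the monotone maps $T_{u/d}^{(t)}$ can jump across the band boundary, so $\mathfrak m$ need not be continuous there — but this is harmless: since $B_t$ expands and $\mathfrak m\ge 0$, the boundary terms carry the favorable sign regardless, and in any case your first, integral–monotonicity argument sidesteps the Leibniz boundary analysis entirely.
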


\begin{proof}
Under the stated regularity, the implicit function theorem applies to \eqref{eq:FOC_expanded}, giving differentiable dependence of $x\mapsto (T_d^{(t)}(x),T_u^{(t)}(x),\theta_t(x))$ on $(\alpha,\beta)$ off the band. Proposition~\ref{prop:comparative_statics} shows that $B_t$ expands weakly as either parameter increases. Differentiating the representation of $\mathcal{T}_t$ in Proposition~\ref{prop:turnover} and using the barycenter identity shows the off–band displacement terms decrease with either parameter. Together with the expansion of $B_t$, this yields the stated signs.
\end{proof}

\begin{prop}[Execution cost and price impact]
\label{prop:exec_cost}
Let $\mathcal{K}_t:=\mathbb{E}\big[f^{(a,b)}_t \big(X,Y-X\big)\big]$ be the one time–step expected trading cost under $\pi^\star_t$. Then $\mathcal{K}_t=0$ on $B_t$ and, on $I\setminus B_t$,
\[
\mathcal{K}_t
=\int_{I\setminus B_t}  \Big(\theta_t(x)\,f^{(a,b)}_t \big(x,T_u^{(t)}(x)-x\big)
+\big(1-\theta_t(x)\big)\,f^{(a,b)}_t \big(x,T_d^{(t)}(x)-x\big)\Big)\,\mathrm d\mu(x).
\]
In the linear–quadratic case with $\beta>0$,
\[
\mathcal{K}_t
\ \ge\
\alpha\,\mathcal{T}_t,
\qquad
\mathcal{K}_t
\ \ge\
\beta \int_{I\setminus B_t}  \theta_t(x)\big(1-\theta_t(x)\big)\Big(T_u^{(t)}(x)-T_d^{(t)}(x)\Big)^2\,\mathrm d\mu(x).
\]
\end{prop}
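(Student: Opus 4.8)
The plan is to reduce everything to the bi--atomic disintegration \eqref{eq:selection_system} of $\pi^\star_t$ supplied by Theorem~\ref{thm:fric_monotone}, then combine the band identity of Lemma~\ref{lem:band_identity} with two elementary convexity bounds for $f_{\alpha,\beta}$. No variational argument is needed: the statement is essentially an algebraic consequence of the structure already established, so the ``main obstacle'' is only the routine bookkeeping of measurability/integrability on the active versus inactive sets.

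\emph{Representation.} Working on a fixed irreducible component $I$ of $\{\Delta F_t>0\}$ on which $\mu$ is atomless (the standing hypotheses of Theorem~\ref{thm:fric_monotone}), I would disintegrate $\mathcal K_t=\mathbb E_\mu\big[\mathbb E[f^{(a,b)}_t(X,Y-X)\mid X]\big]$ and use \eqref{eq:selection_system} to get, for $\mu$--a.e.\ $x$,
\[
\mathbb E\big[f^{(a,b)}_t(X,Y-X)\mid X=x\big]=\theta_t(x)\,f^{(a,b)}_t\big(x,T_u^{(t)}(x)-x\big)+\big(1-\theta_t(x)\big)\,f^{(a,b)}_t\big(x,T_d^{(t)}(x)-x\big).
\]
On $x\in B_t$, Lemma~\ref{lem:band_identity} gives $T_d^{(t)}(x)=T_u^{(t)}(x)=x$, so both increments vanish and, by the normalization $f^{(a,b)}_t(x,0)=0$, the conditional cost is $0$; the same holds on $\{\Delta F_t=0\}$, where transport is the identity. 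Integrating over $I\setminus B_t$ against $\mu$ (and summing over components) yields the displayed expression for $\mathcal K_t$. The integrand is $\mu$--integrable: by the martingale property $\mathbb E[(Y-X)^2]=\mathbb E_\eta[Y^2]-\mathbb E_\mu[X^2]<\infty$ under Assumption~\ref{ass:convex_order}, hence the displacements are square integrable, and the polynomial growth built into $f^{(a,b)}_t$ (Assumption~\ref{ass:fric-assump}(iii), and the explicit form $\alpha|v|+\beta v^2$) keeps $f^{(a,b)}_t(X,Y-X)$ integrable.

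\emph{Linear--quadratic bounds.} Specialize to $f_{\alpha,\beta}(v)=\alpha|v|+\beta v^2$. From $f_{\alpha,\beta}(v)\ge\alpha|v|$ at $v=Y-X$, taking expectations, and invoking the turnover representation of Proposition~\ref{prop:turnover}, one gets $\mathcal K_t\ge\alpha\,\mathbb E[|Y-X|]=\alpha\,\mathcal T_t$. From $f_{\alpha,\beta}(v)\ge\beta v^2$,
\[
\mathcal K_t\ \ge\ \beta\,\mathbb E\big[(Y-X)^2\big]=\beta\!\int_{I\setminus B_t}\!\Big(\theta_t(x)\big(T_u^{(t)}(x)-x\big)^2+\big(1-\theta_t(x)\big)\big(x-T_d^{(t)}(x)\big)^2\Big)\,\mathrm d\mu(x).
\]
Substituting $T_u^{(t)}(x)-x=(1-\theta_t(x))(T_u^{(t)}(x)-T_d^{(t)}(x))$ and $x-T_d^{(t)}(x)=\theta_t(x)(T_u^{(t)}(x)-T_d^{(t)}(x))$ — equivalent to the barycenter weight $\theta_t(x)=(x-T_d^{(t)}(x))/(T_u^{(t)}(x)-T_d^{(t)}(x))$ — collapses the conditional second moment to $\theta_t(x)(1-\theta_t(x))(T_u^{(t)}(x)-T_d^{(t)}(x))^2$, i.e.\ the conditional variance of the two--point kernel, which is exactly the claimed quadratic lower bound.

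\emph{Remark on sharpness and difficulty.} I expect no genuine obstacle; the only care needed is to restrict to components where $\mu$ is atomless so that the bi--atomic kernel and the weight $\theta_t$ are well defined (the trivial coupling being used elsewhere), plus the integrability checks above. In fact the computation gives the exact identity $\mathcal K_t=\alpha\,\mathcal T_t+\beta\int_{I\setminus B_t}\theta_t(x)(1-\theta_t(x))(T_u^{(t)}(x)-T_d^{(t)}(x))^2\,\mathrm d\mu(x)$ in the linear--quadratic case, from which both one--sided bounds follow immediately.
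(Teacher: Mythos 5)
Your proposal is correct and follows essentially the same route as the paper: disintegrate $\mathcal K_t$ along the bi–atomic kernel of Theorem~\ref{thm:fric_monotone}, invoke the band identity to kill the contribution on $B_t$, and in the linear–quadratic case compute the conditional first and second moments via the barycenter weights to obtain the bounds. Your observation that the two inequalities come from the exact decomposition $\mathcal K_t=\alpha\,\mathcal T_t+\beta\int_{I\setminus B_t}\theta_t(1-\theta_t)\bigl(T_u^{(t)}-T_d^{(t)}\bigr)^2\,\mathrm d\mu$ is a nice, slightly sharper restatement of what the paper's proof implicitly uses, and the extra integrability checks are sound but not materially different in method.
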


\begin{proof}
Disintegrate $\pi^\star_t$ as in \eqref{eq:selection_system}. On the band, $Y-X=0$, hence zero cost. For the linear-quadratic setting, $f_{\alpha,\beta}(v)=\alpha|v|+\beta v^2$, we have
\[
\mathbb{E}\big[\,|Y-X|\,\big|\,X=x\,\big]=2\,\theta_t(1-\theta_t)\,\Delta(x),
\qquad
\mathbb{E}\big[\, (Y-X)^2 \,\big|\,X=x\,\big]=\theta_t(1-\theta_t)\,\Delta(x)^2,
\]
where $\Delta(x):=T_u^{(t)}(x)-T_d^{(t)}(x)$. Taking expectations gives the stated lower bounds with coefficients $\alpha$ and $\beta$.
\end{proof}

\begin{remark}
The quantity $\mathcal{T}_t$ is the expected absolute inventory change per rehedge. $\mathcal{K}_t$ is the expected one time–step trading cost. Proposition~\ref{prop:turnover_CS} shows that increasing frictions reduces endogenous turnover (wider no–trade zones), while Proposition~\ref{prop:exec_cost} quantifies how much execution budget is consumed off–band. In particular, \eqref{eq:turnover_LQ_bound} identifies a spread $(|h_t|-\alpha)_+/(2\beta)$ translating the dual slope into an effective displacement scale. These statistics can be aggregated over $t$ to obtain expected total turnover and cost profiles for hedging programs under frictions, and they motivate calibration of $(\alpha,\beta)$ to realized spread/impact data in market-making applications.
\end{remark}

\subsection{Example payoffs: lookback, barrier, and Asian claims}
\label{subsec:worked_payoffs}

In path–dependent problems the continuation value at $t{+}1$ depends on both the next price $y=S_{t+1}$ and a low–dimensional state $ \xi_{t+1}$ summarizing the past. Fixing $ \xi_t$ reduces the one time–step optimization to the statewise setting of Section~\ref{sec:fric_geometry}, with the adjusted cost $\widetilde c_t(x,y; \xi_t)$ of \eqref{eq:adjusted_cost_repeat}. The geometry (monotone two–graph structure, mass identities, and equal–slope selection) then applies for each state, and band behavior reflects the regularity of $y\mapsto \mathsf V_{t+1}(y; \xi_{t+1})$.

\subsubsection*{Lookback (fixed strike)}
Let $\Phi=\big(\max_{0\le s\le N} S_s - K\big)^+$. The sufficient state is the running maximum
\begin{equation}
  \label{eq:lookback_state}
 \xi_t=M_t:=\max\{S_0,\dots,S_t\},\qquad
M_{t+1}= \xi_{t+1}(x,y;M_t)=\max\{M_t,y\}.
\end{equation}
For fixed $M_t=m$, the map $y\mapsto \mathsf V_{t+1}(y;m)$ is nondecreasing, convex, and piecewise $C^1$ with a kink at $y=m$ (the switching locus of the running maximum). Hence $\partial_y^3 \mathsf V_{t+1}(\cdot;m)$ may vanish away from $y=m$, so the strict MSM condition \eqref{eq:MSM_smooth} can fail. By Theorem~\ref{thm:fric_monotone}, the optimizer remains left–monotone with bi–atomic conditionals. Uniqueness may fail as in \cref{rem:degenerate_MSM}. The trade band $B_t=\{x:\ h_t(x)\in\partial_v f^{(a,b)}_t(x,0)\}$ typically widens for states $m$ for which realizations with $y\approx m$ are likely, reflecting inaction near the “at–the–money’’ region of the running maximum.

\subsubsection*{Barrier (up–and–out)}
Let $\Phi=\mathbf 1_{\{\max_{0\le s\le N}S_s<B\}}\cdot \varphi(S_N)$ for bounded Borel $\varphi$. The sufficient state is the barrier flag
\begin{equation}
  \label{eq:barrier_state}
 \xi_t=I_t:=\mathbf 1_{\{\max_{0\le s\le t}S_s<B\}},\qquad
I_{t+1}= \xi_{t+1}(x,y;I_t)= I_t\cdot \mathbf 1_{\{y<B\}}.
\end{equation}
For fixed $I_t=i\in\{0,1\}$, the map $y\mapsto\mathsf V_{t+1}(y;i)$ is piecewise $C^1$ with a kink at $y=B$ when $i=1$, so the frictional MSM may degenerate near the barrier. The equal–slope selection \eqref{eq:FOC_expanded} applies statewise. The optimal endpoints $T_{d/u}^{(t)}(\cdot;i)$ typically jump across the preimage of the barrier, and $B_t$ often abuts the locus where the barrier status switches.

\subsubsection*{Asian (arithmetic average)}
Let $\Phi=\big(\frac{1}{N+1}\sum_{s=0}^N S_s-K\big)^+$. A sufficient state is the running sum
\begin{equation}
  \label{eq:Asian_state}
 \xi_t=A_t:=\sum_{s=0}^t S_s,\qquad
A_{t+1}= \xi_{t+1}(x,y;A_t)=A_t+y.
\end{equation}
For fixed $A_t=a$, the map $y\mapsto\mathsf V_{t+1}(y;a)$ is typically smoother. Under mild regularity of the payoff and of the marginal sequence, one has $y\mapsto\mathsf V_{t+1}(y;a)\in C^3$ with $\partial_y^3\mathsf V_{t+1}(y;a)\ge \kappa_t>0$ on compact sets away from trivial boundary kinks. Hence the strict MSM condition \eqref{eq:MSM_smooth} holds, yielding uniqueness. The bands are thinner, and endpoints vary smoothly with $(\alpha,\beta)$ (Proposition~\ref{prop:comparative_statics}).

For each payoff above, the endpoints are determined by the mass–conservation/coupling identity \eqref{eq:coupling_identity_fric} together with the equal–slope condition \eqref{eq:FOC_expanded}. In the linear–quadratic case $f_{\alpha,\beta}(v)=\alpha|v|+\beta v^2$ with $\beta>0$, the off–band displacement is given by \eqref{eq:relation-h-y-KKT}, and the weights are $\theta_t(x)=\dfrac{x-T_d^{(t)}(x)}{T_u^{(t)}(x)-T_d^{(t)}(x)}$.

\begin{remark}
\label{rem:worked_regularity}
For lookback and barrier claims, kinks in $y\mapsto \mathsf V_{t+1}(\cdot; \xi_{t+1})$ typically preclude \eqref{eq:MSM_smooth} as multiple admissible solutions to \eqref{eq:selection_system} and \eqref{eq:relation-h-y-KKT} may exist, and $B_t$ enlarges near the kink loci (e.g., $y\approx M_t$ or $y\approx B$). For Asian claims, the smoother dependence on $y$ often enforces \eqref{eq:MSM_smooth}, yielding uniqueness and thinner bands. These observations hold statewise and pass to the global optimizer after integrating over the law of $ \xi_t$.
\end{remark}

\section{Multi–marginal extension}
\label{sec:multi_marginal}

We now pass from one time-step to the full $N$ time–step problem. The geometry of $t$–to–$t{+}1$  from \cref{sec:fric_geometry} is applied recursively with the appropriate continuation value, so that the martingale constraint and the monotone structure are enforced at each step. In the frictionless case this timewise program underlies the left–curtain and tilted–curtain constructions of \cite{BJ2016,HLT2016,BeiglbockLabordereTouzi2017}. Here we carry it out in the presence of state–dependent friction \eqref{eq:friction_function}.

\subsection{Recalling the multi–marginal setup}
\label{subsec:mm_setup}

We pass from a single step to a discrete horizon $t=0,\dots,N$ with prescribed one–dimensional marginals $(\mu_t)_{t=0}^N$ in increasing convex order,
\[
\mu_t\preceq_{\mathrm{cx}}\mu_{t+1}\qquad (t=0,\dots,N-1).
\]
On the canonical space $(\R^{N+1},\mathcal B(\R)^{\otimes(N+1)})$ with coordinate process $S_t$, we consider the class of (discrete–time) martingale couplings with these marginals,
\[
\mathcal M(\mu_0,\dots,\mu_N)
:=\Bigl\{\pi\in\mathcal P(\R^{N+1}) \,:\, \pi\circ S_t^{-1}=\mu_t\ \forall t,\ \E_\pi[S_{t+1}\mid S_0,\dots,S_t]=S_t\ \text{a.s.}\Bigr\}.
\]

At each time $t$ we retain the one time–step friction specification $v=y-x\mapsto f^{(a,b)}_t(x,v)$ from \cref{subsec:fric_MSM} and the adjusted cost
\[
\widetilde c_t(x,y)=\mathsf V_{t+1}(y)-\mathsf V_{t+1}(x)-f^{(a,b)}_t(x,y-x).
\]
The rectangle MSM condition \eqref{eq:rectangle_MSM}is assumed timewise. For each $t$ one defines $F_{\mu_t}$, $F_{\mu_{t+1}}$, the potential difference $\Delta F_t=\mathcal C_{\mu_{t+1}}-\mathcal C_{\mu_t}$, and the active set $I_t=\{\Delta F_t>0\}$, which enter the mass–conservation and coupling identities \eqref{eq:mass_conservation_fric}–\eqref{eq:coupling_identity_fric} at time $t$ \cite{BJ2016, HLT2016}.

Under the \cref{def:admissible_payoff}, given a payoff $\Phi:\R^{N+1}\to\R$ , the multi–step frictional martingale optimal transport problem is
\begin{equation}
\label{eq:mm_primal}
\sup_{\pi\in\mathcal M(\mu_0,\dots,\mu_N)}
\E_\pi \left[\ \Phi(S_0,\dots,S_N)\;-\;\sum_{t=0}^{N-1} f^{(a,b)}_t\big(S_t,\,S_{t+1}-S_t\big)\ \right].
\end{equation}
We employ dynamic programming as used in \cite{HLT2016} and define continuation values
\[
\mathsf V_N(y):=\sup_{\pi\in\mathcal M(\mu_N)} \E_\pi \left[\Phi(S_0,\dots,S_{N-1},y)\,\middle|\,S_N=y\right],
\qquad
\mathsf V_t(x):=\sup_{\substack{\pi\in\mathcal M(\mu_t,\dots,\mu_N)}} \E_\pi \left[\Phi\,\middle|\,S_t=x\right],
\]
and at each $t$ solve the one time–step problem between $(\mu_t,\mu_{t+1})$ with adjusted cost $\widetilde c_t$. Under the timewise MSM assumptions and the integrability hypotheses, the one time–step analysis from \cref{thm:fric_monotone} applies directly and seemlessly. Therefore, concatenating the timewise optimal transitions yields an optimizer for \eqref{eq:mm_primal}, and the corresponding dual is obtained by summing the one time–step duals along with the same dual shift as in \cref{subsec:KKT}.

\subsection{Dynamic programming identity and the global dual}

We first record the dynamic programming identity in an integrated form that will be used to
link the one time–step analysis in
\cref{sec:fric_geometry} to the multi–step problem \eqref{eq:mm_primal}. Define the continuation values for $t=N,\dots,0$ by
\[
\mathsf V_N=\Phi,\qquad
\mathsf V_t(x):=\sup_{\substack{\pi\in\mathcal M(\mu_t,\dots,\mu_N)}} 
\mathbb E_\pi \left[\Phi(S_0,\dots,S_N)\,\middle|\,S_t=x\right].
\]
Under the standing integrability assumptions and the timewise MSM condition, the one time–step
value identity holds (cf.\ the frictionless case in \cite{HLT2016}):
\begin{equation}
\label{eq:mm_dpp}
\int_{\R}\mathsf V_t(x)\,\mathrm d\mu_t(x)
\;=\;
\int_{\R}\mathsf V_{t+1}(y)\,\mathrm d\mu_{t+1}(y)
\;+\;
\sup_{\pi_t\in\Pi^M(\mu_t,\mu_{t+1})}\;
\int_{\R^2}\widetilde c_t(x,y)\,\mathrm d\pi_t(x,y),
\qquad t=0,\dots,N-1,
\end{equation}
where
$\widetilde c_t(x,y)=\mathsf V_{t+1}(y)-\mathsf V_{t+1}(x)-f^{(a,b)}_t(x,y-x)$
is the adjusted one time–step cost from \cref{subsec:KKT}.
Identity \eqref{eq:mm_dpp} is obtained by conditioning the global objective at time $t$,
reducing the inner supremum to a one time–step martingale transport between $(\mu_t,\mu_{t+1})$,
and integrating against $\mu_t$, see also \cite{HLT2016,BeiglbockLabordereTouzi2017} for the frictionless derivation
and \S\ref{sec:fric_geometry} for the frictional one time–step reduction.

Next we formulate the global dual as the sum of the one time–step duals following \cref{rmk:local-to-global-dual}. For each $t$ introduce Borel functions
$\phi_t,\psi_t:\R\to\R$ and a predictable multiplier $h_t:\R\to\R$
(depending on $S_t$, i.e.\ on the current abscissa), and impose the
one time–step dual feasibility
\begin{equation}
\label{eq:mm_dual_ineq}
\phi_t(x)+\psi_t(y)+h_t(x)\,(y-x)\ \le\ f^{(a,b)}_t(x,y-x)
\qquad\text{for all }x,y\in\R,\ \ t=0,\dots,N-1.
\end{equation}
The multi–marginal dual problem is then
\begin{equation}
\label{eq:mm_dual}
\sup_{\{\phi_t,\psi_t,h_t\}_{t=0}^{N-1}}
\sum_{t=0}^{N-1}\left(\int_{\R}\phi_t(x)\,\mathrm d\mu_t(x)
+\int_{\R}\psi_t(y)\,\mathrm d\mu_{t+1}(y)\right)
\quad\text{subject to \eqref{eq:mm_dual_ineq} for every }t.
\end{equation}
Equivalently, undoing the dual shift (Lemma~\ref{lem:dual_shift_KKT}),
one may replace $(\phi_t,\psi_t)$ by $(\varphi_t,\chi_t)$ through
$\phi_t=-\widehat\varphi_t$, $\psi_t=-\widehat\chi_t$, so that
$\widehat\varphi_t(x)=\varphi_t(x)+\mathsf V_{t+1}(x)$ and
$\widehat\chi_t(y)=\chi_t(y)-\mathsf V_{t+1}(y)$, and the objective in
\eqref{eq:mm_dual} changes by the constant
$\sum_{t=0}^{N-1}\big(\mu_t(\mathsf V_{t+1})-\mu_{t+1}(\mathsf V_{t+1})\big)$,
which telescopes to $\mu_0(\mathsf V_1)-\mu_N(\Phi)$.

\begin{theorem}[Extension of \cref{thm:fric_monotone} to the multi--marginal setting]
\label{thm:mm_fric_monotone}
Assume the standing integrability and timewise frictional MSM hypotheses. For each $t\in\{0,\dots,N-1\}$ let
\[
\widetilde c_t(x,y)=\mathsf V_{t+1}(y)-\mathsf V_{t+1}(x)-f^{(a,b)}_t(x,y-x),
\qquad
\Delta F_t=\mathcal C_{\mu_{t+1}}-\mathcal C_{\mu_t},\qquad
I_t=\{\Delta F_t>0\}.
\]
Then there exists $\pi^\star\in\mathcal M(\mu_0,\dots,\mu_N)$ such that, for each $t$:
\begin{enumerate}
\item The time-$t$ transition kernel of $\pi^\star$ admits a bi--atomic disintegration on $I_t$:
\[
\pi^\star(\mathrm d\mathbf s)=\mu_t(\mathrm dx)\,K_t^\star(x,\mathrm dy)\,\pi^\star(\mathrm d\mathbf s_{\neq t,t+1}\mid x,y),\qquad
K_t^\star(x,\mathrm dy)=\theta_t(x)\,\delta_{T_u^{(t)}(x)}(\mathrm dy)+\bigl(1-\theta_t(x)\bigr)\,\delta_{T_d^{(t)}(x)}(\mathrm dy),
\]
with Borel maps $T_d^{(t)}\le \mathrm{Id}\le T_u^{(t)}$ satisfying that $T_d^{(t)}$ is nonincreasing and $T_u^{(t)}$ is nondecreasing on $I_t$, and
\[
\theta_t(x)=\frac{x-T_d^{(t)}(x)}{T_u^{(t)}(x)-T_d^{(t)}(x)}\in[0,1]\qquad\text{for }\mu_t\text{--a.e.\ }x\in I_t.
\]
\item The mass identities hold in the following sense:
\[
\mathrm dF_{\mu_{t+1}} \bigl(T_u^{(t)}(x)\bigr)=\theta_t(x)\,\mathrm dF_{\mu_t}(x),
\qquad
\mathrm d\bigl(\Delta F_t\bigr) \bigl(T_d^{(t)}(x)\bigr)=-(1-\theta_t(x))\,\mathrm dF_{\mu_t}(x),
\]
equivalently
\[
F_{\mu_{t+1}} \bigl(T_u^{(t)}(x)\bigr)=F_{\mu_t}(x)+\Delta F_t \bigl(T_d^{(t)}(x)\bigr)
\qquad\text{for }\mu_t\text{--a.e.\ }x\in I_t.
\]
\item The endpoints $(T_d^{(t)},T_u^{(t)})$ satisfy the equal--slope condition of Theorem~\ref{thm:fric_monotone} with continuation $\mathsf V_{t+1}$.
\end{enumerate}
If, in addition, the one time-step optimizer at time $t$ is unique (for example, under strict convexity in $v$ together with the rectangle MSM condition), then $(T_d^{(t)},T_u^{(t)},\theta_t)$ are $\mu_t$--a.e.\ unique, and any optimal $\pi\in\mathcal M(\mu_0,\dots,\mu_N)$ has time-$t$ transition $K_t^\star$.
\end{theorem}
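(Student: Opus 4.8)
The plan is to reduce the $N$ time--step problem to a finite chain of one time--step problems by backward dynamic programming, apply \cref{thm:fric_monotone} at each step to the pair $(\mu_t,\mu_{t+1})$ equipped with the adjusted cost $\widetilde c_t$, concatenate the resulting optimal transitions into a single martingale coupling, and finally propagate the one--step uniqueness statement to the time-$t$ transition of every global optimizer. For path--dependent $\Phi$ the continuation value at $t+1$ depends on the past only through the low--dimensional sufficient statistic $\xi_{t+1}$ of \cref{subsec:worked_payoffs}, so one really works on the augmented state $(S_t,\xi_t)$; fixing $\xi_t$ returns exactly the statewise one time--step setting of \cref{sec:fric_geometry}, and no new phenomenon arises. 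For notational economy I would describe the argument as if $\mathsf V_{t+1}$ were a function of $y$ alone.

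First I would record that the continuation values $\mathsf V_N:=\Phi$ and $\mathsf V_t(x):=\sup_{\pi\in\mathcal M(\mu_t,\dots,\mu_N)}\E_\pi[\,\Phi\mid S_t=x\,]$ are Borel and inherit the $p$--order polynomial growth from admissibility of $\Phi$ (\cref{def:admissible_payoff}), using the moment bounds of \cref{ass:convex_order} together with the coercive lower bound on $f^{(a,b)}_t$ from \cref{ass:fric-assump}; the timewise frictional MSM hypothesis is then imposed on the resulting $\widetilde c_t(x,y)=\mathsf V_{t+1}(y)-\mathsf V_{t+1}(x)-f^{(a,b)}_t(x,y-x)$. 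Next I would invoke the dynamic programming identity \eqref{eq:mm_dpp}: conditioning the global objective at time $t$, using the tower property and the martingale constraint, the inner supremum collapses to a one time--step martingale transport between $(\mu_t,\mu_{t+1})$ with cost $\widetilde c_t$. The only delicate point here is the interchange of the pointwise--in--$x$ kernel optimization with the marginal constraint $\pi_t\circ Y^{-1}=\mu_{t+1}$, which is handled by the Kuratowski--Ryll-Nardzewski measurable selection theorem, exactly as in the frictionless derivation of \cite{HLT2016}. Iterating \eqref{eq:mm_dpp} across $t$ expresses the multi--step value \eqref{eq:mm_primal} as a telescoped sum of the one time--step adjusted values $\sup_{\pi_t\in\Pi^M(\mu_t,\mu_{t+1})}\int\widetilde c_t\,\mathrm d\pi_t$.

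With the reduction in hand, for each $t$ I would apply \cref{thm:fric_monotone} to $(\mu_t,\mu_{t+1},\widetilde c_t)$, obtaining a one time--step optimizer $\pi^\star_t(\mathrm dx,\mathrm dy)=\mu_t(\mathrm dx)\,K_t^\star(x,\mathrm dy)$ whose kernel $K_t^\star$ is bi--atomic on $I_t=\{\Delta F_t>0\}$, with Borel maps $T_d^{(t)}\le\mathrm{Id}\le T_u^{(t)}$ ($T_d^{(t)}$ nonincreasing, $T_u^{(t)}$ nondecreasing), weights $\theta_t(x)=\tfrac{x-T_d^{(t)}(x)}{T_u^{(t)}(x)-T_d^{(t)}(x)}\in[0,1]$, the mass identities, and the equal--slope condition. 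I would then build $\pi^\star\in\mathcal M(\mu_0,\dots,\mu_N)$ by concatenating $K_0^\star,\dots,K_{N-1}^\star$ along the augmented state; $\pi^\star$ has the prescribed marginals and the martingale property by construction, and a backward induction built on \eqref{eq:mm_dpp} shows that $\pi^\star$ attains the supremum \eqref{eq:mm_primal}. Disintegrating $\pi^\star$ at times $(t,t+1)$ recovers its $(S_t,S_{t+1})$--marginal as $\mu_t\otimes K_t^\star$ and, conditioning the remaining coordinates on $(S_t,S_{t+1})$, yields the factorization asserted in item (1); properties (1)--(3) then transfer verbatim from \cref{thm:fric_monotone}.

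For the uniqueness clause, suppose the one time--step problem at time $t$ has a unique optimizer (as under strict convexity of $v\mapsto f^{(a,b)}_t(x,v)$ together with the rectangle MSM condition). I would show that, for any global optimizer $\pi\in\mathcal M(\mu_0,\dots,\mu_N)$, its $(S_t,S_{t+1})$--marginal $\pi_t$ is itself optimal for $\sup_{\pi_t\in\Pi^M(\mu_t,\mu_{t+1})}\int\widetilde c_t\,\mathrm d\pi_t$; since that optimizer is unique, $\pi_t=\mu_t\otimes K_t^\star$, which forces the time-$t$ transition of $\pi$ to be $K_t^\star$ and $(T_d^{(t)},T_u^{(t)},\theta_t)$ to be $\mu_t$--a.e.\ unique. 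The substance of this step --- and the part I expect to be the main obstacle --- is the one time--step optimality of $\pi_t$: one takes a hypothetically strictly better transition $\widetilde K_t\in\Pi^M(\mu_t,\mu_{t+1})$, re--attaches a measurably selected family $y\mapsto\widehat\pi^{(t+1)}_y$ of conditionally optimal continuation couplings, and checks that the spliced measure is again in $\mathcal M(\mu_0,\dots,\mu_N)$ (marginals, and the martingale property at $t$ and beyond) with strictly larger objective, a contradiction. Producing the measurable selection and verifying feasibility after the splice --- especially in the genuinely path--dependent case, where one carries the sufficient statistic $\xi$ as part of the state --- is standard but technical, and is where the proof demands the most care.
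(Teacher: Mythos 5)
Your proposal follows the paper's proof essentially verbatim: backward dynamic programming via \eqref{eq:mm_dpp}, timewise application of \cref{thm:fric_monotone} to $(\mu_t,\mu_{t+1},\widetilde c_t)$, concatenation of the one--step kernels into a global coupling via Ionescu--Tulcea, and propagation of one--step uniqueness to the time-$t$ transition of any global optimizer. The additional care you flag (measurable selection in the DPP interchange, the splicing argument behind the converse optimality of $\pi_t$, the sufficient--statistic caveat for path--dependent $\Phi$) is welcome detail on points the paper treats tersely, but it does not change the route.
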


\begin{proof}
Set the continuation values recursively backward by
\[
\mathsf V_N=\Phi,\qquad
\mathsf V_t(x):=\sup_{\pi\in\mathcal M(\mu_t,\dots,\mu_N)}
\mathbb E_\pi \left[\Phi(S_0,\dots,S_N)\,\middle|\,S_t=x\right],
\]
so that the dynamic programming identity \eqref{eq:mm_dpp} holds and decomposes the global value into successive one time–step contributions. In particular, the optimization at time~$t$ reduces to 
\[
\sup_{\pi_t\in\Pi^M(\mu_t,\mu_{t+1})}
\int_{\R^2}\widetilde c_t(x,y)\,\mathrm d\pi_t(x,y),
\qquad
\widetilde c_t(x,y)=\mathsf V_{t+1}(y)-\mathsf V_{t+1}(x)-f^{(a,b)}_t(x,y-x).
\]

By the timewise frictional MSM hypothesis and Theorem~\ref{thm:fric_monotone}, applied with continuation $\mathsf V_{t+1}$, there exists a left–monotone maximizer 
\[
\pi_t^\star\in\Pi^M(\mu_t,\mu_{t+1}),
\]
whose disintegration on $I_t$ is bi–atomic on two monotone graphs 
$T_d^{(t)}\le\mathrm{Id}\le T_u^{(t)}$ with weight $\theta_t$, satisfying
\[
\pi_t^\star(\mathrm dy\mid x)
=\theta_t(x)\,\delta_{T_u^{(t)}(x)}(\mathrm dy)
+\bigl(1-\theta_t(x)\bigr)\,\delta_{T_d^{(t)}(x)}(\mathrm dy),
\]
together with the pushforward relation and coupling identity \eqref{eq:stieltjes_push} and \eqref{eq:coupling_identity_proved}, respectively.

Let $K_t^\star(x,\cdot):=\pi_t^\star(\cdot\mid x)$ and construct the joint measure 
$\pi^\star\in\mathcal P(\R^{N+1})$ inductively using the Ionescu–Tulcea theorem. 
By construction, each $K_t^\star$ pushes $\mu_t$ to $\mu_{t+1}$ and satisfies the martingale constraint 
$\E_{K_t^\star}[Y\mid X=x]=x$. hence 
\[
\pi^\star\in\mathcal M(\mu_0,\dots,\mu_N).
\]

Summing the optimal one–step values in \eqref{eq:mm_dpp} yields that $\pi^\star$ achieves the supremum in the multi–marginal problem \eqref{eq:mm_primal}.  
Conversely, for any global optimizer $\pi\in\mathcal M(\mu_0,\dots,\mu_N)$, conditioning on $(S_t,S_{t+1})$ shows that its time–$t$ transition $\pi_t$ must maximize the one–step functional with cost $\widetilde c_t$. Hence Theorem~\ref{thm:fric_monotone} implies that $\pi_t$ has the stated left–monotone two–graph structure, mass identities, and equal–slope condition.

Finally, if the one–step optimizer at each time $t$ is unique (e.g., under strict convexity in $v$ and the rectangle MSM condition), then the corresponding $(T_d^{(t)},T_u^{(t)},\theta_t)$ are unique $\mu_t$–a.e., and every optimal $\pi$ must have the same time–$t$ transition $K_t^\star$.
\end{proof}

\section{Proof of theorem~\ref{thm:strong_duality}}
\label{sec:proof_strong_duality}

We first recall the primal problem for better readability
\begin{equation*}
\mathbf{V} 
= \sup_{\pi \in \mathcal{M}(\mu_0,\dots,\mu_N)} 
\left\{ \mathbb{E}_\pi[\Phi] 
- \sum_{t=0}^{N-1} \int_{\mathbb{R}^2} f^{(a,b)}_t(x, y - x) \, \mathrm{d}\pi_t(x, y)
\right\}.
\end{equation*}

In this proof we combine the convex duality with martingale compactness arguments following 
\cite{BLP2013,HLT2016,BeiglbockNutzTouzi2017,DolinskySoner2014,galichon2014stochastic}. 
Throughout, \(t\in\{0,\dots,N-1\}\). 
The marginals \((\mu_t)_{t=0}^N\) satisfy the convex ordering of Assumption~\ref{ass:convex_order}, 
the payoff \(\Phi\) is admissible as per \cref{def:admissible_payoff}, 
and each friction cost \(f_t^{(a,b)}:\R\times\R\to[0,\infty]\) satisfies \cref{ass:fric-assump}.

Define continuation values backward by
\[
\mathsf V_N=\Phi,\qquad
\mathsf V_t(x):=\sup_{\pi\in\mathcal M(\mu_t,\dots,\mu_N)}
\E_\pi\left[\Phi(S_0,\dots,S_N)\,\middle|\,S_t=x\right].
\]
The one time–step adjusted cost reads
\[
\widetilde c_t(x,y)
=\mathsf V_{t+1}(y)-\mathsf V_{t+1}(x)-f_t^{(a,b)}(x,y-x).
\]
The superlinearity of \(f_t^{(a,b)}\) guarantees coercivity and compactness of the feasible set for each one–step problem
\[
\sup_{\pi_t\in\Pi^M(\mu_t,\mu_{t+1})}\int \widetilde c_t(x,y)\,\mathrm d\pi_t(x,y),
\]
which thus admits a maximizer, see \cite[Thm.~1.1]{BLP2013} for tightness and closedness of martingale couplings and \cite{HLT2016} for the one–step geometric structure.  
Backward induction along \(t=N-1,\dots,0\) then yields a dynamic programming identity identical in form to \eqref{eq:mm_dpp}.  
Telescoping the terms \(\mathsf V_{t+1}(y)-\mathsf V_{t+1}(x)\) across time shows that the global primal value \(\mathbf V\) equals the sum of the one–step values.

Next, we fix \(t\) and denote \(f_t:=f_t^{(a,b)}\).  
Consider the convex conjugate in the \(v\)-variable:
\[
f_t^\ast(x,h)
:=\sup_{v\in\R}\{h\,v - f_t(x,v)\},\qquad (x,h)\in\R^2.
\]
Superlinearity of \(f_t\) implies that \(f_t^\ast(x,\cdot)\) is finite everywhere, convex, and superlinear as \(|h|\to\infty\).  
The one time-step Lagrangian dual (see \cref{subsec:KKT}) introduces test functions \((\phi,\psi,h)\) satisfying
\[
\phi(x)+\psi(y)+h(x)(y-x)\le f_t(x,y-x)\qquad\forall x,y,
\]
which encodes both frictional and martingale constraints.  
By the Fenchel–Young inequality,
\[
h(x)(y-x)-f_t(x,y-x)\le f_t^\ast(x,h(x)),
\]
so the pointwise constraint is equivalently
\[
\phi(x)+\psi(y)\le f_t^\ast(x,h(x))\qquad\forall x,y.
\]
This yields the dual functional
\begin{equation}\label{eq:dual_one_step}
\inf_{(\phi,\psi,h)}\{\mu_t(\phi)+\mu_{t+1}(\psi):\ \phi(x)+\psi(y)\le f_t^\ast(x,h(x))\ \forall x,y\}.
\end{equation}

Eliminating \(\psi\) gives \(\psi(y)\le\inf_x(f_t^\ast(x,h(x))-\phi(x))\), hence
\begin{equation}\label{eq:dual_one_step_reduced}
\mu_t(\phi)+\mu_{t+1}(\psi)
=\mu_t(\phi)-\mu_{t+1}(\phi^\sharp_h),
\qquad
\phi^\sharp_h(y):=\sup_x\{\phi(x)-f_t^\ast(x,h(x))\}.
\end{equation}
Applying Fenchel–Rockafellar duality on the Banach space \(L^1(\mu_t)\times L^1(\mu_{t+1})\), the value of \eqref{eq:dual_one_step} equals the primal
\[
\sup_{\pi_t\in\Pi^M(\mu_t,\mu_{t+1})}\int \widetilde c_t(x,y)\,\mathrm d\pi_t(x,y).
\]
This is the one time–step frictional martingale duality, that extends the frictionless case of 
\cite{HLT2016,BeiglbockNutzTouzi2017} to state–dependent convex frictions.  
Superlinearity ensures finiteness and existence of minimizers by coercivity in \(h\) and weak compactness of the feasible set for \((\phi,\psi)\).

Summing \eqref{eq:dual_one_step_reduced} over \(t=0,\dots,N-1\) and undoing the dual shift by the continuation \(\mathsf V_{t+1}\) (cf.\ Lemma~\ref{lem:dual_shift_KKT}) yields
\begin{equation}\label{eq:global_dual_final}
\mathbf D
=\inf_{\substack{(\varphi_t,\psi_t,h_t)_{t=0}^{N-1}\\
\varphi_t(x)+\psi_t(y)+h_t(x)(y-x)\le f_t(x,y-x)}}
\sum_{t=0}^{N-1}\bigl(\mu_t(\varphi_t)+\mu_{t+1}(\psi_t)\bigr)
+\sum_{t=0}^{N-1}\bigl(\mu_{t+1}(\mathsf V_{t+1})-\mu_t(\mathsf V_{t+1})\bigr).
\end{equation}
The last sum telescopes to \(-\mu_0(\mathsf V_1)+\mu_N(\Phi)\), 
absorbing the former into \(\varphi_0\) recovers the semi–static (predictable) dual familiar from 
\cite{BLP2013,galichon2014stochastic}, 
now augmented by state-dependent frictions.  
Replacing \(\psi_t\) via the constraint gives an equivalent predictable increment formulation involving \(f_t^\ast(x,h_t(x))\). One can refer to \cite{BeiglbockNutzTouzi2017} for the analogous frictionless reduction.  
Summing the one–step dual equalities across \(t\) then yields \(\mathbf V=\mathbf D\).

\smallskip
Finally, we discuss the existence of optimizers for both the primal and dual problems. Let $\{\pi^n\}\subset\mathcal M(\mu_0,\dots,\mu_N)$ be maximizing for \eqref{eq:primal_problem_revised}.  
Convex order and bounded \(p\)-moments (\(p\ge2\)) imply tightness.  
Superlinearity of \(f_t^{(a,b)}\) ensures uniform integrability of increments \(S_{t+1}-S_t\) via de la Vallée–Poussin.  
By Prokhorov, there exists a subsequence with \(\pi^n\Rightarrow\bar\pi\in\mathcal M(\mu_0,\dots,\mu_N)\). 
Martingale closedness follows as in \cite[Sec.~2]{BLP2013}.  
Continuity of \(\pi\mapsto\int\Phi\,\mathrm d\pi\) (admissibility of \(\Phi\)) and lower semicontinuity of 
\(\pi\mapsto\int f_t^{(a,b)}\,\mathrm d\pi\) (convexity and superlinearity) imply upper semicontinuity of the objective, 
so \(\bar\pi\) attains \(\mathbf V\).

Let \((\varphi_t^n,\psi_t^n,h_t^n)\) be minimizing for \eqref{eq:global_dual_final}.  
Superlinearity of \(f_t^\ast\) yields coercivity in \(h_t^n\), giving uniform integrability in \(L^1(\mu_t)\).  
The feasible set is closed under \(L^1\)–limits by Fatou’s lemma and the pointwise inequality.  
By a Komlós subsequence argument, and after normalizing \((\varphi_t^n,\psi_t^n)\) by additive constants, 
we obtain a limit feasible triple \((\varphi_t,\psi_t,h_t)\) attaining \(\mathbf D\), 
see \cite[Sec.~3]{BLP2013} for the closedness of the superhedging cone argument.

\smallskip
Combining the one time–step Fenchel–Rockafellar duality \eqref{eq:dual_one_step}--\eqref{eq:dual_one_step_reduced}, telescoping structure \eqref{eq:global_dual_final}, and compactness arguments yields strong duality:
\[
\boxed{\ \mathbf V=\mathbf D,\quad 
\text{and both extrema are attained.}\ }
\]
\qed

\appendix


\section*{Appendix~A. Standard technical results}
\addcontentsline{toc}{section}{Appendix~A. Standard technical results}

This appendix collects several analytic tools that have been borrowed from convex analysis and probability theory.
Their proofs can be found in \cite{Rockafellar1970,Dudley2002,Bogachev2007}.

\begin{lemma}[Fatou’s lemma]
\label{lem:fatou}
Let $(\Omega,\mathcal F,\mu)$ be a measure space and $(f_n)_{n\ge1}$ a sequence of nonnegative measurable functions. Then
\[
\int_\Omega \liminf_{n\to\infty} f_n\,\mathrm d\mu
\;\le\;
\liminf_{n\to\infty}\int_\Omega f_n\,\mathrm d\mu.
\]
If $(f_n)$ is uniformly integrable and $f_n\to f$ in measure, then
$\int f_n\,\mathrm d\mu\to\int f\,\mathrm d\mu$.
\end{lemma}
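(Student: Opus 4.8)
The plan is to get the inequality from the monotone convergence theorem and then obtain the convergence statement as a version of Vitali's theorem via a subsequence argument. For the first part, I would set $g_n:=\inf_{k\ge n}f_k$: each $g_n$ is measurable and nonnegative, the sequence $(g_n)$ is nondecreasing, and $g_n\uparrow\liminf_{n\to\infty}f_n$ pointwise by definition of the $\liminf$. The monotone convergence theorem then gives $\int_\Omega g_n\,\mathrm d\mu\to\int_\Omega\liminf_n f_n\,\mathrm d\mu$. Since $g_n\le f_k$ for every $k\ge n$, one has $\int_\Omega g_n\,\mathrm d\mu\le\int_\Omega f_k\,\mathrm d\mu$ for all $k\ge n$, hence $\int_\Omega g_n\,\mathrm d\mu\le\inf_{k\ge n}\int_\Omega f_k\,\mathrm d\mu$; letting $n\to\infty$ and combining the two displays yields $\int_\Omega\liminf_n f_n\,\mathrm d\mu\le\liminf_n\int_\Omega f_n\,\mathrm d\mu$.

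For the second part I would argue as follows. Uniform integrability forces $\sup_n\int_\Omega|f_n|\,\mathrm d\mu<\infty$. Since $f_n\to f$ in measure, every subsequence admits a further subsequence $(f_{n_k})$ converging $\mu$–a.e.; applying the first part to $|f_{n_k}|$ gives $\int_\Omega|f|\,\mathrm d\mu\le\liminf_k\int_\Omega|f_{n_k}|\,\mathrm d\mu<\infty$, so $f\in L^1(\mu)$. To upgrade a.e.\ convergence of $(f_{n_k})$ to $L^1$ convergence, fix $\varepsilon>0$, use uniform integrability to choose $\delta>0$ with $\sup_n\int_A|f_n|\,\mathrm d\mu<\varepsilon$ whenever $\mu(A)<\delta$ (and, if $\mu$ is not finite, also fix a set $E$ of finite measure carrying all but $\varepsilon$ of the mass uniformly in $n$, together with the matching tail control on $|f|$ from the first part), then apply Egorov's theorem on $E$ to extract $G\subset E$ with $\mu(E\setminus G)<\delta$ on which $f_{n_k}\to f$ uniformly. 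Splitting $\int_\Omega|f_{n_k}-f|\,\mathrm d\mu$ over $G$, $E\setminus G$, and $\Omega\setminus E$, and bounding the $|f|$-contributions by the first part, gives $\int_\Omega|f_{n_k}-f|\,\mathrm d\mu\le C\varepsilon$ for $k$ large, hence $\int_\Omega f_{n_k}\,\mathrm d\mu\to\int_\Omega f\,\mathrm d\mu$. Since this holds along a further subsequence of every subsequence and the limit is always the same value $\int_\Omega f\,\mathrm d\mu$, the standard subsequence principle gives $\int_\Omega f_n\,\mathrm d\mu\to\int_\Omega f\,\mathrm d\mu$ for the full sequence.

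The bookkeeping is entirely routine and I do not expect a genuine obstacle. The only mild points of care are that convergence is assumed only in measure, so the a.e.\ statements must be run along subsequences and transferred back by the subsequence principle, and that on an infinite measure space uniform integrability has to be used in the form that also controls mass near infinity so that Egorov's theorem can be applied on a set of finite measure; in the applications in this paper all relevant measures are probability measures, where this last point is vacuous.
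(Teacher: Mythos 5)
The paper states this as a standard background result in Appendix~A and gives no proof of its own, deferring to the cited textbooks (\cite{Rockafellar1970,Dudley2002,Bogachev2007}). Your argument is correct and is the canonical one: the first part is Fatou via the monotone convergence theorem applied to $g_n:=\inf_{k\ge n}f_k$, and the second part is Vitali's convergence theorem obtained via the subsequence-of-subsequences principle (convergence in measure yields a.e.\ convergence along subsequences), Egorov's theorem on a set of finite measure, and the uniform-integrability split of the integral. You also correctly observe that, as stated on a general measure space, the Vitali conclusion requires the tightness component of uniform integrability (control of mass near infinity); the lemma as written is slightly loose on this point, but it is harmless in this paper since every measure to which it is applied is a probability measure.
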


\begin{theorem}[Fenchel–Rockafellar duality]
\label{thm:fenchel_rockafellar}
Let \(X\) be a Banach space, \(X^\ast\) its dual, and \(F,G:X\to(-\infty,+\infty]\) proper, convex, lower semicontinuous.  
If \(0\in\operatorname{core}(\operatorname{dom}F-\operatorname{dom}G)\), then
\[
\inf_{x\in X}\{F(x)+G(x)\}
\;=\;
\sup_{x^\ast\in X^\ast}\{-F^\ast(-x^\ast)-G^\ast(x^\ast)\},
\]
and both extrema are attained.
\end{theorem}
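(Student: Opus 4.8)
The plan is to get the inequality $\ge$ from Fenchel--Young and the reverse one by a Hahn--Banach separation in $X\times\mathbb R$. For weak duality, fix $x\in X$ and $x^\ast\in X^\ast$: Fenchel--Young gives $F(x)\ge\langle -x^\ast,x\rangle-F^\ast(-x^\ast)$ and $G(x)\ge\langle x^\ast,x\rangle-G^\ast(x^\ast)$, and adding these cancels the linear terms, so $F(x)+G(x)\ge-F^\ast(-x^\ast)-G^\ast(x^\ast)$; passing to $\inf_x$ and $\sup_{x^\ast}$ gives $\inf_x\{F+G\}\ge\sup_{x^\ast}\{-F^\ast(-x^\ast)-G^\ast(x^\ast)\}$. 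Moreover $0\in\operatorname{core}(\operatorname{dom}F-\operatorname{dom}G)$ forces $\operatorname{dom}F\cap\operatorname{dom}G\neq\varnothing$, so $\alpha:=\inf_x\{F(x)+G(x)\}<+\infty$; and if $\alpha=-\infty$ weak duality already gives equality, so I may assume $\alpha\in\mathbb R$.

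For the reverse inequality I would work in $X\times\mathbb R$ with the two closed convex sets
\[
C:=\operatorname{epi}F=\{(x,\lambda):\lambda\ge F(x)\},\qquad
D:=\{(x,\lambda):\lambda\le\alpha-G(x)\},
\]
closedness coming from lower semicontinuity of $F$ and $G$; by the definition of $\alpha$ there is no $(x,\lambda)$ with $\lambda>F(x)$ and $\lambda<\alpha-G(x)$ at once, since that would force $F(x)+G(x)<\alpha$. The decisive step --- which I expect to be the main obstacle --- is to separate $C$ and $D$ by a closed hyperplane that is moreover \emph{non-vertical}. This is exactly where the Banach-space structure and the qualification $0\in\operatorname{core}(\operatorname{dom}F-\operatorname{dom}G)$ are used: in finite dimensions core equals interior and classical separation suffices, whereas in general a Baire-category argument (of Robinson--Ursescu / Attouch--Brezis type, using completeness of $X$ and the closedness of $C$ and $D$) upgrades the purely algebraic core condition to the topological regularity needed for Hahn--Banach, and at the same time excludes a vertical separator --- a vertical hyperplane would separate $\operatorname{dom}F$ from $\operatorname{dom}G$, which the core condition precludes.

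A non-vertical separating hyperplane is the graph $\lambda=\langle y^\ast,x\rangle+\beta$ of an affine function; reading off that $C$ lies above it and $D$ below yields, after taking suprema in $x$, the bounds $F^\ast(y^\ast)\le-\beta$ and $G^\ast(-y^\ast)\le\beta-\alpha$. Hence, with $z^\ast:=-y^\ast$, one gets $-F^\ast(-z^\ast)-G^\ast(z^\ast)=-F^\ast(y^\ast)-G^\ast(-y^\ast)\ge\alpha$, so the dual supremum is $\ge\alpha$ and, combined with weak duality, equals $\alpha$ and is attained at $z^\ast$ --- the asserted equality together with attainment of the dual maximum. For attainment on the primal side one runs the same separation argument on the conjugate pair (legitimate since $F=F^{\ast\ast}$, $G=G^{\ast\ast}$ by Fenchel--Moreau) under the symmetric qualification, or --- as in every application of this lemma in the present paper --- extracts from a minimizing sequence a limit attaining $\alpha$ via the coercivity supplied by the superlinear frictions of \Cref{ass:fric-assump} (de la Vall\'ee--Poussin together with Prokhorov). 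Everything other than the separation step is the Fenchel--Young inequality and routine convex bookkeeping.
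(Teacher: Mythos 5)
The paper gives no proof of this statement: it appears in Appendix~A among ``standard technical results'' with a bare citation to \cite{Rockafellar1970,Dudley2002,Bogachev2007}, so there is no argument of the paper's to compare against. Your sketch is therefore judged on its own.

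The broad strategy you lay out is the standard one: weak duality from Fenchel--Young, then a separation in $X\times\mathbb R$ under the constraint qualification, with the hard part being to force the separating hyperplane to be non-vertical. Your instinct that this is where Banach completeness enters via a Baire-category/Robinson--Ursescu/Attouch--Brezis type argument is right: the hypothesis $0\in\operatorname{core}(\operatorname{dom}F-\operatorname{dom}G)$ implies that $\bigcup_{\lambda>0}\lambda\bigl(\operatorname{dom}F-\operatorname{dom}G\bigr)=X$, which is exactly the closed-subspace qualification in Attouch--Brezis, and this is what upgrades the algebraic core to the topological regularity one needs. One technical caveat worth recording: in infinite dimensions $\operatorname{epi}F$ generally has empty interior, so separating the two epigraph-type sets $C,D$ directly is delicate; the cleaner route is to apply the Baire argument to the value function $v(z)=\inf_x\{F(x)+G(x-z)\}$, deduce continuity (hence subdifferentiability) of $v$ at $0$, and read the dual maximizer off a subgradient. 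The endgame --- translating the affine separator into $-F^\ast(-z^\ast)-G^\ast(z^\ast)\ge\alpha$ --- is correct.

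Your unease about primal attainment is, however, pointing at a genuine defect, and it sits in the theorem \emph{as stated}, not in your proof. Under the core qualification alone, Fenchel--Rockafellar duality yields strong duality and attainment of the \emph{dual} supremum only. The primal infimum need not be attained: take $X=\mathbb R$, $F(x)=e^x$, $G\equiv 0$; then $0\in\operatorname{core}(\operatorname{dom}F-\operatorname{dom}G)=\mathbb R$, both values equal $0$, the dual is attained at $x^\ast=0$, but $\inf_x e^x=0$ is not attained. Your first fix (rerun the separation on the conjugate pair) would require a symmetric qualification on the dual domains which is not hypothesized; your second fix (coercivity from Assumption~\ref{ass:fric-assump}, de la Vall\'ee--Poussin plus Prokhorov) is how the paper actually obtains primal optimizers in Section~\ref{sec:proof_strong_duality}, but it is extra structure not present in the abstract theorem. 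So the phrase ``both extrema are attained'' should read ``the dual supremum is attained,'' with primal attainment established separately in the applications --- exactly as you suspected.
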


\begin{theorem}[Kuratowski–Ryll-Nardzewski measurable selection theorem]
\label{thm:kuratowski_ryll_nardzewski}
Let $(\Omega,\mathcal F)$ be a measurable space and \(K:\Omega\rightrightarrows X\) a set-valued map into a Polish space \(X\) such that for every open \(U\subset X\),
\(\{\omega:K(\omega)\cap U\ne\emptyset\}\in\mathcal F\).
Then there exists a measurable selector \(s:\Omega\to X\) with \(s(\omega)\in K(\omega)\) for all \(\omega\in\Omega\).
\end{theorem}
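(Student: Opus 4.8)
The plan is to carry out the classical successive–approximation argument of Kuratowski and Ryll-Nardzewski, producing a uniformly convergent sequence of countably valued measurable maps whose limit is the desired selector. Since $X$ is Polish, I first fix a complete compatible metric $d$ on $X$, rescaled so that $d\le 1/2$, and a countable dense set $\{x_k\}_{k\ge1}\subset X$; I write $d(x,K(\omega)):=\inf_{z\in K(\omega)}d(x,z)$ and, as is standard for this statement, work under the convention that $K$ has nonempty \emph{closed} values, so that this infimum is finite and the Hausdorff‐type estimates below land back in $K(\omega)$ in the limit. The "open meeting" hypothesis is exactly weak measurability of the correspondence, and it is the only measurability input used.

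Next I build maps $s_n:\Omega\to X$ recursively with the properties: $s_n$ is measurable and countably valued, $d(s_n(\omega),K(\omega))<2^{-n}$, and $d(s_n(\omega),s_{n-1}(\omega))<2^{-(n-1)}$ for all $\omega$. Start with $s_0\equiv x_1$, so that $d(s_0(\omega),K(\omega))\le 1/2<1$. Given a measurable $s_{n-1}$ with $d(s_{n-1}(\omega),K(\omega))<2^{-(n-1)}$, set, for each $k$,
\[
A_k^{(n)}:=\bigl\{\omega:\ d\bigl(x_k,s_{n-1}(\omega)\bigr)<2^{-(n-1)}\bigr\}\;\cap\;\bigl\{\omega:\ K(\omega)\cap B\bigl(x_k,2^{-n}\bigr)\ne\emptyset\bigr\},
\]
with $B(x_k,2^{-n})$ the open ball. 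The first set is measurable because $s_{n-1}$ is measurable and $y\mapsto d(x_k,y)$ is continuous; the second is measurable by the hypothesis applied to $U=B(x_k,2^{-n})$. A one‑line triangle–inequality argument (pick $z\in K(\omega)$ with $d(s_{n-1}(\omega),z)<2^{-(n-1)}$, then choose $x_k$ dense enough near $z$) shows $\bigcup_k A_k^{(n)}=\Omega$. Disjointify by $B_1^{(n)}:=A_1^{(n)}$ and $B_k^{(n)}:=A_k^{(n)}\setminus\bigcup_{j<k}A_j^{(n)}$, obtaining a countable measurable partition of $\Omega$, and put $s_n:=x_k$ on $B_k^{(n)}$; the two membership conditions defining $A_k^{(n)}$ give exactly the two required metric estimates, and $s_n$ is measurable and countably valued.

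The estimate $d(s_n(\omega),s_{n-1}(\omega))<2^{-(n-1)}$, summable in $n$, makes $(s_n(\omega))_n$ a Cauchy sequence with a modulus independent of $\omega$, so by completeness of $d$ it converges uniformly to a map $s$, which is measurable as a pointwise limit of measurable maps. Since $z\mapsto d(z,K(\omega))$ is $1$‑Lipschitz, $d(s(\omega),K(\omega))=\lim_n d(s_n(\omega),K(\omega))=0$, and closedness of $K(\omega)$ forces $s(\omega)\in K(\omega)$ for every $\omega$; thus $s$ is the required measurable selector.

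The only delicate point — the main obstacle — is the inductive step, where measurability, the two metric bounds, and the countable‑valuedness of $s_n$ must be secured simultaneously: measurability of the $A_k^{(n)}$ is precisely where weak measurability of $K$ enters, completeness of $X$ supplies the limit, and closedness of the values places it inside $K(\omega)$. If one insists on the statement with merely nonempty values, I would simply note that the correspondences to which the paper applies this theorem (sections $\Gamma_x$ of a closed left–monotone support, or graphs of monotone maps) already have closed values, so the reduction is automatic and no further argument is needed.
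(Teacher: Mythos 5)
Your proof is correct, and it is the classical successive-approximation argument due to Kuratowski and Ryll-Nardzewski: build countably valued measurable $2^{-n}$--approximate selectors, verify the Cauchy estimate, pass to the limit using completeness, and close with the $1$-Lipschitz distance function. There is, however, nothing in the paper to compare it against: the statement appears in Appendix~A as a borrowed ``standard technical result,'' with the proof deferred to \cite{Rockafellar1970,Dudley2002,Bogachev2007} and no argument given in the text.

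Two remarks worth keeping. First, you correctly flag that the theorem as stated in the paper is missing the hypothesis that $K$ has nonempty \emph{closed} values; this is not cosmetic, since the final step $d(s(\omega),K(\omega))=0\Rightarrow s(\omega)\in K(\omega)$ genuinely requires closedness, and without it the conclusion can fail. Your observation that the correspondences actually invoked in the body of the paper (vertical sections $\Gamma_x$ of a closed left--monotone support, graphs of monotone right-continuous maps) automatically have closed values is exactly the right way to reconcile the stated hypothesis with its use. Second, the one place in your write-up that deserves a half-sentence more is the covering step $\bigcup_k A_k^{(n)}=\Omega$: you need to choose $x_k$ with $d(x_k,z)<\min\{2^{-n},\,2^{-(n-1)}-d(s_{n-1}(\omega),z)\}$, both quantities being strictly positive thanks to the strict inequality in the inductive hypothesis. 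You gesture at this (``choose $x_k$ dense enough near $z$''), and the triangle inequality then yields both membership conditions, so the proof is complete; I mention it only because it is the one place where the strictness of the $2^{-(n-1)}$ bound is actually used.
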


\begin{theorem}[Rokhlin disintegration theorem]
\label{thm:rokhlin_disintegration}
Let $(\Omega_1,\mathcal F_1)$ and $(\Omega_2,\mathcal F_2)$ be standard Borel spaces,  
and let \(\pi\in\mathcal P(\Omega_1\times\Omega_2)\) with first marginal \(\mu:=\pi\circ\mathrm{pr}_1^{-1}\).  
Then there exists a measurable family of probability measures
\(\{\pi(\cdot\mid x)\}_{x\in\Omega_1}\subset\mathcal P(\Omega_2)\)
such that for all bounded measurable \(f:\Omega_1\times\Omega_2\to\R\),
\[
\int f(x,y)\,\pi(\mathrm d x,\mathrm d y)
=\int_{\Omega_1}\left(\int_{\Omega_2}f(x,y)\,\pi(\mathrm d y\mid x)\right)\mu(\mathrm d x).
\]
Moreover, the conditional law \(x\mapsto \pi(\cdot\mid x)\) is measurable in the sense of weak convergence.
\end{theorem}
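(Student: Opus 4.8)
This is the classical Rokhlin (Rohlin) disintegration theorem, so the plan is to reproduce the standard construction of a regular version of the conditional law through conditional distribution functions, together with the monotone--class bootstrap that turns the pointwise identity on product sets into the stated formula. First I would reduce to the case $\Omega_2=[0,1]$: by the Borel isomorphism theorem, any standard Borel space is Borel isomorphic to a Borel subset of $[0,1]$, so composing with such an isomorphism $\iota$ transports $\pi$ to a probability measure on $\Omega_1\times[0,1]$ concentrated on $\Omega_1\times\iota(\Omega_2)$; proving the theorem there and pushing the conditional measures back through $\iota$ gives the general case, the inclusion $\pi(\iota(\Omega_2)^c\mid x)=0$ for $\mu$-a.e.\ $x$ being automatic from the concentration of $\pi$.

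On $\Omega_1\times[0,1]$ I would construct the conditional distribution functions as follows. For each rational $q\in[0,1]$, the map $A\mapsto\pi(A\times[0,q])$ is a finite measure on $\mathcal F_1$ with $0\le\pi(A\times[0,q])\le\mu(A)$, so by Radon--Nikodym it has a measurable density $G_q:\Omega_1\to[0,1]$, chosen $\equiv 1$ for $q=1$. Since $q\le q'$ implies $\pi(A\times[0,q])\le\pi(A\times[0,q'])$ for all $A$, one has $G_q\le G_{q'}$ $\mu$-a.e.; removing the countable union of the resulting exceptional null sets (one per pair of rationals, plus one for the normalization at $q=1$) leaves a $\mu$-null set $N$ off which $q\mapsto G_q(x)$ is nondecreasing on $\mathbb Q\cap[0,1]$ with value $1$ at $q=1$. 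For $x\notin N$, let $F_x$ be its right--continuous regularization, a genuine distribution function on $[0,1]$, and let $\pi(\cdot\mid x)$ be the associated Borel probability measure; for $x\in N$ set $\pi(\cdot\mid x):=\delta_0$. Each $x\mapsto\pi([0,q]\mid x)$ (rational $q$) is then $\mathcal F_1$-measurable as a countable infimum of measurable maps, and by continuity from above of $\pi$ in the second variable the density relation persists: $\int_A\pi([0,q]\mid x)\,\mu(\mathrm dx)=\pi(A\times[0,q])$ for all $A\in\mathcal F_1$ and all rational $q$.

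The disintegration identity then follows by a monotone--class argument. I would let $\mathcal D$ be the collection of $B\in\mathcal F_1\otimes\mathcal B([0,1])$ for which $x\mapsto\pi(B_x\mid x)$ is measurable and $\int\pi(B_x\mid x)\,\mu(\mathrm dx)=\pi(B)$, with $B_x:=\{y:(x,y)\in B\}$. Using continuity of $\pi(\cdot\mid x)$ from below and above one checks that $\mathcal D$ is a Dynkin system containing $\Omega_1\times[0,1]$; by the previous step and by taking rational limits it contains every measurable rectangle $A\times J$, and these form a generating $\pi$-system, so Dynkin's $\pi$--$\lambda$ theorem yields $\mathcal D=\mathcal F_1\otimes\mathcal B([0,1])$. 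Thus the identity holds for $f=\mathbf 1_B$, hence for simple $f$ by linearity and for all bounded measurable $f$ by monotone/dominated convergence. Finally, since the Borel $\sigma$-algebra of $\mathcal P([0,1])$ under the weak topology is generated by the evaluations $\nu\mapsto\nu([0,q])$, $q\in\mathbb Q$, and each $x\mapsto\pi([0,q]\mid x)=F_x(q)$ is $\mathcal F_1$-measurable, the map $x\mapsto\pi(\cdot\mid x)$ is measurable into $\mathcal P([0,1])$ for the weak topology; transporting through $\iota$ gives measurability into $\mathcal P(\Omega_2)$, which is the final claim.

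Nothing in the argument is deep; the only point requiring care — and the reason the standard Borel hypothesis is essential — is the construction step, namely arranging that a \emph{single} $\mu$-null set suffices so that, simultaneously for $\mu$-a.e.\ $x$, the countable family $(G_q(x))_{q\in\mathbb Q}$ is monotone with the correct value at $1$, so that $F_x$ is a bona fide distribution function and $\pi(\cdot\mid x)$ a genuine probability measure. This uses crucially that $[0,1]$ carries a countable generating chain of intervals, which is exactly what the Borel isomorphism with a standard Borel space provides; the remaining ingredients (Radon--Nikodym, the $\pi$--$\lambda$ bootstrap, and the identification of the weak Borel $\sigma$-algebra of $\mathcal P([0,1])$) are routine.
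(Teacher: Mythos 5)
The paper does not prove this statement: it appears in Appendix~A, which the paper expressly labels as a collection of standard technical tools whose proofs are referred to the cited literature (Rockafellar, Dudley, Bogachev). So there is no ``paper's proof'' to compare against; I will assess your argument on its own merits.

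Your proof is correct and is essentially the canonical construction of a regular conditional distribution. The reduction to $\Omega_2=[0,1]$ via the Borel isomorphism theorem, the use of Radon--Nikodym to produce densities $G_q$ of $A\mapsto\pi(A\times[0,q])$ against $\mu$ for rational $q$, the consolidation of countably many exceptional sets into a single $\mu$-null set $N$ so that $q\mapsto G_q(x)$ is monotone with $G_1(x)=1$ off $N$, the right-continuous regularization $F_x$ and the resulting Lebesgue--Stieltjes kernel, the Dynkin $\pi$--$\lambda$ bootstrap from rectangles $A\times J$ with rational-endpoint intervals $J$ to all of $\mathcal F_1\otimes\mathcal B([0,1])$, and finally the identification of the weak Borel $\sigma$-algebra on $\mathcal P([0,1])$ as generated by $\nu\mapsto\nu([0,q])$ for rational $q$ --- all of these are the standard ingredients and are correctly assembled. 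Two points you handled with appropriate care and that are worth flagging as the genuine crux: (i) the density relation $\int_A\pi([0,q]\mid x)\,\mu(\mathrm dx)=\pi(A\times[0,q])$ must be re-derived for the regularized $F_x(q)$ rather than the raw $G_q(x)$, since in general $F_x(q)=\inf_{q'>q}G_{q'}(x)$ need not agree pointwise with $G_q(x)$; your appeal to monotone convergence along $q_n\downarrow q$ together with continuity from above of $\pi$ is exactly the right fix. (ii) The step where $\pi(\iota(\Omega_2)^c\mid x)=0$ for $\mu$-a.e.\ $x$ relies on $\iota(\Omega_2)$ being Borel in $[0,1]$, which the isomorphism theorem provides. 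Your closing remark correctly locates the role of the standard-Borel hypothesis: it supplies a countable generating chain of sets on $\Omega_2$, without which the single-null-set argument fails. No gaps.
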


\begin{theorem}[Ionescu–Tulcea extension theorem]
\label{thm:ionescu_tulcea}
Let \((E_t,\mathcal E_t)_{t\ge0}\) be measurable spaces and \(K_t:(E_0\times\cdots\times E_t)\to\mathcal P(E_{t+1})\) transition kernels.  
Given an initial measure \(\mu_0\in\mathcal P(E_0)\), there exists a unique probability measure  
\(\pi\in\mathcal P\big(\prod_{t\ge0}E_t\big)\) such that for every measurable \(A_{t+1}\subset E_{t+1}\),
\[
\pi(S_{t+1}\in A_{t+1}\mid S_0,\dots,S_t)=K_t(S_0,\dots,S_t;A_{t+1})\quad\text{a.s.}
\]
The marginals are defined recursively by
\(\mu_{t+1}(A)=\int K_t(x_0,\dots,x_t;A)\,\mu_t(\mathrm d x_0,\dots,\mathrm d x_t)\).
\end{theorem}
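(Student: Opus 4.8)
The plan is to follow the classical Ionescu--Tulcea construction, whose hallmark is that it produces a genuinely countably additive extension straight from the kernels, with no topological hypotheses on the coordinate spaces. First I would build the finite--dimensional distributions. For each $n\ge 0$ define $\mu_n\in\mathcal P(E_0\times\cdots\times E_n)$ on the product $\sigma$--algebra $\mathcal E_0\otimes\cdots\otimes\mathcal E_n$ by iterated integration against $\mu_0$ and the kernels,
\[
\mu_n(B)=\int_{E_0}\mu_0(\mathrm dx_0)\int_{E_1}K_0(x_0;\mathrm dx_1)\cdots\int_{E_n}K_{n-1}(x_0,\dots,x_{n-1};\mathrm dx_n)\,\mathbf 1_B(x_0,\dots,x_n).
\]
Well--definedness of each inner integral requires that, for bounded measurable $g$, the map $(x_0,\dots,x_{j-1})\mapsto\int g\,K_{j-1}(x_0,\dots,x_{j-1};\mathrm dx_j)$ is measurable; this follows by a monotone--class argument from the defining measurability of transition kernels. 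Since every $K_j$ has total mass one, integrating out the last coordinate yields the consistency relation $\mu_{n+1}\circ\mathrm{pr}_n^{-1}=\mu_n$, where $\mathrm{pr}_n$ projects onto the first $n+1$ coordinates; the same computation with $B=E_0\times\cdots\times E_n\times A$ gives the recursive marginal formula $\mu_{n+1}(A)=\int K_n(x_0,\dots,x_n;A)\,\mu_n(\mathrm dx_0,\dots,\mathrm dx_n)$ stated in the theorem.

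Next, set $\Omega:=\prod_{t\ge0}E_t$, let $\mathcal A$ be the algebra of cylinder sets $C=B\times\prod_{t>n}E_t$ with $B\in\mathcal E_0\otimes\cdots\otimes\mathcal E_n$, and define the finitely additive set function $\pi_0(C):=\mu_n(B)$; consistency makes this independent of the chosen representation, and finite additivity is immediate. The decisive step is to show that $\pi_0$ is continuous at $\varnothing$, i.e.\ $\pi_0(C_k)\to0$ whenever $C_k\downarrow\varnothing$ in $\mathcal A$, equivalently that $\pi_0$ is a premeasure. Arguing by contradiction, suppose $\pi_0(C_k)\ge\varepsilon>0$ for all $k$. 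Integrating the indicator $\mathbf 1_{C_k}$ successively against the kernels, from the top coordinate downward, produces, for every level $j\ge0$, measurable functions $f^{(j)}_k:E_0\times\cdots\times E_{j-1}\to[0,1]$, nonincreasing in $k$, with $\int f^{(0)}_k\,\mathrm d\mu_0=\pi_0(C_k)$ and $f^{(j)}_k(x_0,\dots,x_{j-1})=\int_{E_j}f^{(j+1)}_k(x_0,\dots,x_j)\,K_{j-1}(x_0,\dots,x_{j-1};\mathrm dx_j)$. Monotone convergence applied to $f^{(0)}_k\downarrow g^{(0)}$ gives $\int g^{(0)}\,\mathrm d\mu_0\ge\varepsilon$, hence $g^{(0)}(x_0^{*})>0$ for some $x_0^{*}$, i.e.\ $\inf_k f^{(0)}_k(x_0^{*})>0$; repeating the argument one level at a time — at level $j$ using that $\int f^{(j+1)}_k(x_0^{*},\dots,x_{j-1}^{*},\cdot)\,K_{j-1}(x_0^{*},\dots,x_{j-1}^{*};\mathrm dx_j)=f^{(j)}_k(x_0^{*},\dots,x_{j-1}^{*})$ is bounded below uniformly in $k$ — produces a point $\omega^{*}=(x_0^{*},x_1^{*},\dots)\in\Omega$ with $\inf_k f^{(j)}_k(x_0^{*},\dots,x_{j-1}^{*})>0$ for every $j$. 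For each fixed $k$, once $j$ exceeds the number of coordinates $C_k$ depends on, $f^{(j)}_k(x_0^{*},\dots,x_{j-1}^{*})$ equals $\mathbf 1_{C_k}(\omega^{*})\in\{0,1\}$ and is strictly positive, hence equals one; thus $\omega^{*}\in\bigcap_k C_k$, contradicting $C_k\downarrow\varnothing$. Therefore $\pi_0$ is countably additive on $\mathcal A$, and Carath\'eodory's extension theorem yields a unique probability measure $\pi$ on $\sigma(\mathcal A)=\bigotimes_{t\ge0}\mathcal E_t$ extending $\pi_0$ (see \cite{Dudley2002,Bogachev2007}).

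It remains to identify the conditional laws and to prove uniqueness. Fix $n$, $A\in\mathcal E_{n+1}$, and $B\in\mathcal F_n:=\sigma(S_0,\dots,S_n)$. By the construction of $\mu_{n+1}$ one has $\pi\bigl(B\cap\{S_{n+1}\in A\}\bigr)=\int_B K_n\bigl(S_0,\dots,S_n;A\bigr)\,\mathrm d\pi$, and since $\omega\mapsto K_n(S_0(\omega),\dots,S_n(\omega);A)$ is $\mathcal F_n$--measurable and $[0,1]$--valued, this is precisely the defining relation for a version of the conditional probability, so $\pi(S_{n+1}\in A\mid S_0,\dots,S_n)=K_n(S_0,\dots,S_n;A)$ $\pi$--a.s.; a $\pi$--system/Dynkin argument in $A$ upgrades this to all measurable $A_{n+1}$. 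For uniqueness, any $\pi'\in\mathcal P(\Omega)$ with the same initial law $\mu_0$ and the same kernel conditionals must, by induction on $n$ using the tower property, agree with $\pi$ on every cylinder set, and therefore on $\sigma(\mathcal A)$ by the uniqueness clause of Carath\'eodory's theorem. The main obstacle is the continuity at $\varnothing$ of $\pi_0$: this is the one place where the proof genuinely exploits the \emph{sequential} structure of the kernels — it is exactly what allows the theorem to dispense with the standard--Borel or Polish hypotheses needed in Kolmogorov's extension theorem — and the diagonal selection of $\omega^{*}$ must be organized so that the successively chosen coordinates remain simultaneously compatible with \emph{every} $C_k$, which is what makes the contradiction go through.
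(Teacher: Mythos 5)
The paper does not actually give a proof of this statement: it appears in Appendix~A among ``standard technical results'' with the reader referred to \cite{Dudley2002,Bogachev2007}, so there is no in-paper argument to compare against. Your proof is the classical Ionescu--Tulcea construction found in those references — iterated kernel integration to build a consistent family of finite--dimensional laws, a finitely additive set function on the cylinder algebra, $\sigma$--additivity via the monotone--convergence diagonal selection of a point in $\bigcap_k C_k$, Carath\'eodory extension, identification of the conditionals, and a $\pi$--system uniqueness argument — and the structure and the key step (continuity at $\varnothing$ without any topological hypotheses, exploiting only the sequential kernel structure) are both right.

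One small bookkeeping slip worth fixing: you declare $f^{(j)}_k:E_0\times\cdots\times E_{j-1}\to[0,1]$, which for $j=0$ makes $f^{(0)}_k$ a constant on the empty product, yet you then write $\int f^{(0)}_k\,\mathrm d\mu_0=\pi_0(C_k)$ and later evaluate $g^{(0)}(x_0^*)$ as if it were a function on $E_0$; similarly the stated recursion at $j=0$ would invoke a nonexistent $K_{-1}$. The intended convention is presumably $f^{(j)}_k:E_0\times\cdots\times E_j\to[0,1]$ with
\[
f^{(j)}_k(x_0,\dots,x_j)=\int_{E_{j+1}} f^{(j+1)}_k(x_0,\dots,x_{j+1})\,K_j(x_0,\dots,x_j;\mathrm dx_{j+1}),\qquad \int_{E_0} f^{(0)}_k\,\mathrm d\mu_0=\pi_0(C_k),
\]
and with that indexing your selection of $\omega^*=(x_0^*,x_1^*,\dots)$ and the final contradiction $\omega^*\in\bigcap_k C_k$ go through exactly as you wrote. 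This is a notational off-by-one, not a gap in the reasoning.
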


\begin{theorem}[Prokhorov compactness theorem]
\label{thm:prokhorov}
A family \(\mathcal P_0\subset\mathcal P(E)\) of probability measures on a Polish space \(E\)  
is relatively compact under the topology of weak convergence if and only if it is tight,  
i.e. for every \(\varepsilon>0\) there exists a compact \(K_\varepsilon\subset E\) such that
\(\inf_{\pi\in\mathcal P_0}\pi(K_\varepsilon)\ge1-\varepsilon\).
Every tight sequence admits a weakly convergent subsequence.
\end{theorem}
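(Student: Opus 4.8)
The plan is to prove the two implications separately: tightness $\Rightarrow$ relative compactness is the substantive direction, while relative compactness $\Rightarrow$ tightness is the converse, which is where completeness of $E$ enters. The closing assertion about sequences then comes for free: the weak topology on $\mathcal P(E)$ is metrizable (e.g.\ by the L\'evy--Prokhorov metric) when $E$ is separable, so relative compactness and relative sequential compactness coincide, and the proof of the forward implication is itself a subsequence extraction.

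For the forward direction I would first pass to a compactification. A Polish space $E$ embeds homeomorphically onto a Borel subset of the Hilbert cube $\widehat E:=[0,1]^{\mathbb N}$, which is compact metric; identify $E$ with its image and extend every $\pi\in\mathcal P_0$ by zero to an element of $\mathcal P(\widehat E)$. Since $C(\widehat E)$ is a separable Banach space, the weak-$\ast$ topology on its dual unit ball is metrizable and, by Banach--Alaoglu, compact, and $\mathcal P(\widehat E)$ is a weak-$\ast$ closed subset of that ball (Riesz representation). Hence any sequence $(\pi_n)\subset\mathcal P_0$ has a subsequence with $\pi_{n_j}\rightharpoonup\pi$ in $\mathcal P(\widehat E)$. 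Tightness is now used twice. First, choosing compact $K_m\subset E$ with $\pi(K_m)\ge 1-1/m$ for all $\pi\in\mathcal P_0$, each $K_m$ is closed in $\widehat E$, so the portmanteau inequality for closed sets yields $\pi(K_m)\ge\limsup_j\pi_{n_j}(K_m)\ge 1-1/m$; thus the weak-$\ast$ limit $\pi$ is carried by $\bigcup_m K_m\subset E$, i.e.\ $\pi\in\mathcal P(E)$. Second, to upgrade weak-$\ast$ convergence on $\widehat E$ to weak convergence on $E$: given $f\in C_b(E)$ and $\varepsilon>0$, pick $m$ with $1/m\le\varepsilon$, so that $\pi_{n_j}(K_m)\ge 1-\varepsilon$ for all $j$ and $\pi(K_m)\ge 1-\varepsilon$; extend $f|_{K_m}$ by Tietze to $\tilde f\in C(\widehat E)$ with $\|\tilde f\|_\infty\le\|f\|_\infty$, observe that $\int f\,d\rho$ and $\int\tilde f\,d\rho$ differ by at most $2\|f\|_\infty\varepsilon$ whether $\rho=\pi_{n_j}$ or $\rho=\pi$, and let $\varepsilon\downarrow0$ after using $\int\tilde f\,d\pi_{n_j}\to\int\tilde f\,d\pi$. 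This gives $\int f\,d\pi_{n_j}\to\int f\,d\pi$ for every $f\in C_b(E)$, i.e.\ $\pi_{n_j}\rightharpoonup\pi$ in $\mathcal P(E)$, so $\mathcal P_0$ is relatively compact.

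For the converse, assume $\mathcal P_0$ is relatively compact and fix $\varepsilon>0$. By separability, for each $k\in\mathbb N$ cover $E$ by balls $B(x_i^k,1/k)$, $i\ge1$. The key claim is that for every $k$ there is $N_k$ with $\inf_{\pi\in\mathcal P_0}\pi\big(\bigcup_{i\le N_k}B(x_i^k,1/k)\big)\ge 1-\varepsilon 2^{-k}$. If this fails for some $k$, one finds $\pi_N\in\mathcal P_0$ with $\pi_N\big(\bigcup_{i\le N}B(x_i^k,1/k)\big)<1-\varepsilon 2^{-k}$; extracting a weakly convergent subsequence $\pi_{N_j}\rightharpoonup\pi$ and using that $U_M:=\bigcup_{i\le M}B(x_i^k,1/k)$ is open with $\pi_{N_j}(U_M)\le\pi_{N_j}(U_{N_j})<1-\varepsilon 2^{-k}$ once $N_j\ge M$, the portmanteau lower bound for open sets forces $\pi(U_M)\le 1-\varepsilon 2^{-k}$ for all $M$, contradicting $U_M\uparrow E$. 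Granting the claim, set $K:=\bigcap_k\overline{\bigcup_{i\le N_k}B(x_i^k,1/k)}$; then $K$ is closed and totally bounded, hence compact because $E$ is complete, and $\pi(E\setminus K)\le\sum_k\pi\big(E\setminus\bigcup_{i\le N_k}B(x_i^k,1/k)\big)\le\varepsilon$ uniformly over $\pi\in\mathcal P_0$. This proves tightness.

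I expect the main obstacle to be the second use of tightness in the forward implication: a generic $f\in C_b(E)$ need not extend continuously to the compactification $\widehat E$, and it is exactly the uniform control of mass on compact sets that allows replacing $f$ by a Tietze surrogate $\tilde f\in C(\widehat E)$ approximating $f$ simultaneously for all $\pi_{n_j}$ and for the limit $\pi$; without tightness the weak-$\ast$ limit may place mass outside $E$ and convergence tested against $C_b(E)$ can fail. The remaining ingredients---the embedding into $\widehat E$, weak-$\ast$ compactness of $\mathcal P(\widehat E)$, and the portmanteau inequalities---are standard, and the final sentence of the theorem is precisely the output of the subsequence extraction carried out above.
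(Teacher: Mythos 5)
The paper does not prove \cref{thm:prokhorov}: it appears in Appendix~A among ``standard technical results'' that are explicitly cited to textbooks (Dudley, Bogachev) without proof, so there is no proof in the paper to compare against. Your argument is a correct and complete proof of the classical theorem, and it follows the canonical compactification route: embed the Polish space $E$ into the Hilbert cube $\widehat E$, push each $\pi\in\mathcal P_0$ forward, invoke Riesz representation and Banach--Alaoglu (together with separability of $C(\widehat E)$ for metrizability) to extract a weak-$\ast$ limit in $\mathcal P(\widehat E)$, then use tightness twice --- once with the closed-set portmanteau bound to pin the limit on $E$, and once with a Tietze surrogate to upgrade $C(\widehat E)$-testing to $C_b(E)$-testing. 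The converse is the standard covering argument in which relative compactness forces, for each mesh $1/k$, a finite subcover capturing all but $\varepsilon 2^{-k}$ of the mass uniformly; the intersection of the corresponding closures is closed and totally bounded, and completeness is exactly what makes it compact. Two minor remarks worth retaining if you write this up formally: (i) the metrizability of the weak topology on $\mathcal P(E)$ that lets you pass freely between relative compactness and sequential compactness is itself a consequence of separability of $E$, so it is worth stating; and (ii) in the portmanteau step for the converse, the inequality $\pi(U_M)\le\liminf_j\pi_{N_j}(U_M)$ is being combined with $\pi_{N_j}(U_M)<1-\varepsilon 2^{-k}$ for $N_j\ge M$, which gives $\pi(U_M)\le 1-\varepsilon 2^{-k}$; you state this correctly, but it is the one place a sign is easy to flip.
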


\begin{lemma}[de la Vallée–Poussin criterion for uniform integrability]
\label{lem:dlvp}
A family \(\mathcal F\subset L^1(\mu)\) is uniformly integrable if and only if  
there exists a convex, increasing function \(\Phi:\R_+\to\R_+\) with \(\Phi(x)/x\to\infty\) as \(x\to\infty\)
such that
\[
\sup_{f\in\mathcal F}\int \Phi(|f|)\,\mathrm d\mu<\infty.
\]
\end{lemma}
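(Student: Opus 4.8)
The plan is to prove the two implications separately: the sufficiency direction is a one–line truncation estimate, while the necessity direction rests on the classical construction of a Young–type function adapted to the given family.

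For sufficiency, I would assume a $\Phi$ as in the statement and set $M:=\sup_{f\in\mathcal F}\int\Phi(|f|)\,\mathrm d\mu<\infty$. Given $\varepsilon>0$, the superlinear growth $\Phi(x)/x\to\infty$ lets one pick $R_\varepsilon$ so that $x\le(\varepsilon/M)\Phi(x)$ for all $x\ge R_\varepsilon$; then for every $f\in\mathcal F$ one gets $\int_{\{|f|>R_\varepsilon\}}|f|\,\mathrm d\mu\le(\varepsilon/M)\int\Phi(|f|)\,\mathrm d\mu\le\varepsilon$, which is exactly the tail condition defining uniform integrability (and $L^1$–boundedness follows on a finite measure space, or is vacuous under the tail–condition reading of uniform integrability used in the paper).

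For necessity, I would first extract thresholds $c_n\uparrow\infty$ with $\sup_{f\in\mathcal F}\int_{\{|f|>c_n\}}|f|\,\mathrm d\mu\le 2^{-n}$, available directly from uniform integrability. Then I set $g:=\sum_{n\ge1}\mathbf 1_{[c_n,\infty)}$ and $\Phi(x):=\int_0^{x}g(t)\,\mathrm dt$. Being the primitive of a nonnegative nondecreasing function, $\Phi$ is convex and increasing, and $\Phi(x)/x\ge\tfrac12\,g(x/2)\to\infty$ because $c_n\to\infty$. The only computation is the uniform bound: the layer–cake identity gives $\Phi(|f|)=\sum_{n\ge1}(|f|-c_n)^+\le\sum_{n\ge1}|f|\,\mathbf 1_{\{|f|>c_n\}}$, whence $\int\Phi(|f|)\,\mathrm d\mu\le\sum_{n\ge1}2^{-n}=1$ uniformly in $f\in\mathcal F$.

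This result is classical and presents no genuine obstacle; the only step needing mild care is the construction of $\Phi$, where one must secure convexity, the superlinear growth $\Phi(x)/x\to\infty$, and the uniform finiteness of $\int\Phi(|f|)\,\mathrm d\mu$ simultaneously — the step–function primitive built from the thresholds $c_n$ achieves all three at once. A secondary, purely definitional subtlety is whether "uniformly integrable" is taken as the tail condition $\lim_{R\to\infty}\sup_f\int_{\{|f|>R\}}|f|\,\mathrm d\mu=0$ alone or together with $L^1$–boundedness; both implications above are compatible with either reading once $\mu$ is finite, as it is throughout the paper.
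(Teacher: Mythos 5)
Your proof is correct, but note that the paper does not actually prove this lemma: it is stated in Appendix~A under ``Standard technical results'' with proofs explicitly deferred to the cited references (Rockafellar, Dudley, Bogachev), so there is no in-paper argument to compare against. What you wrote is the classical de la Vallée--Poussin argument found in those references. The sufficiency direction is correct as stated: from $\Phi(x)/x\to\infty$ pick $R_\varepsilon$ so that $x\le(\varepsilon/M)\Phi(x)$ for $x\ge R_\varepsilon$, then $\int_{\{|f|>R_\varepsilon\}}|f|\,\mathrm d\mu\le\varepsilon$ uniformly. The necessity direction is also correct: with thresholds $c_n\uparrow\infty$ satisfying $\sup_f\int_{\{|f|>c_n\}}|f|\,\mathrm d\mu\le 2^{-n}$ and $g=\sum_n\mathbf 1_{[c_n,\infty)}$, the primitive $\Phi(x)=\int_0^x g=\sum_n(x-c_n)^+$ is convex, satisfies $\Phi(x)/x\ge\tfrac12 g(x/2)\to\infty$, and the layer-cake bound $(x-c_n)^+\le x\,\mathbf 1_{\{x>c_n\}}$ gives $\sup_f\int\Phi(|f|)\,\mathrm d\mu\le 1$. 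One cosmetic point: your $\Phi$ vanishes on $[0,c_1]$, so it is nondecreasing rather than strictly increasing; if ``increasing'' is meant strictly, replace $\Phi$ by $\Phi(x)+x$, which preserves convexity and superlinearity and adds only the (finite, by uniform integrability on a finite measure space) uniform $L^1$-bound to the supremum.
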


\begin{lemma}[Komlós subsequence theorem]
\label{lem:komlos}
Let $(X_n)_{n\ge1}$ be a sequence bounded in \(L^1(\mu)\) on a probability space \((\Omega,\mathcal F,\mu)\).  
Then there exists a subsequence $(X_{n_k})_{k\ge1}$ and \(X\in L^1(\mu)\) such that the Cesàro means converge a.s.:
\[
\frac{1}{m}\sum_{k=1}^m X_{n_k}\ \longrightarrow\ X\quad\text{a.s. as }m\to\infty.
\]
In particular, \(X_{n_k}\) converges to \(X\) weakly in \(L^1(\mu)\).
\end{lemma}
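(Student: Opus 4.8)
The plan is to reduce to the nonnegative case, exploit the Banach--Saks property of Hilbert space on suitable truncations, and then absorb the non--uniformly integrable tails by a Borel--Cantelli argument. First I would split $X_n=X_n^+-X_n^-$: proving the assertion for $(X_n^+)$, then along that subsequence for $(X_n^-)$, and refining once more, it suffices to treat $X_n\ge 0$ with $C:=\sup_n\E X_n<\infty$.

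For the nonnegative case, for each integer $M\ge 1$ set $Y_n^{(M)}:=X_n\wedge M$. These are bounded by $M$, so $\{Y_n^{(M)}\}_n$ is bounded in $L^2(\mu)$ and relatively weakly compact there; by a diagonal extraction over $M$ I pass to a subsequence along which $Y_n^{(M)}\rightharpoonup Z_M$ weakly in $L^2$ for every $M$, with $0\le Z_M\le M$ nondecreasing in $M$ and $\E Z_M\le C$. I set $X:=\lim_M Z_M$, which lies in $L^1(\mu)$ with $\E X\le C$ by monotone convergence and Fatou (Lemma~\ref{lem:fatou}). Next, applying the Banach--Saks theorem in $L^2(\mu)$ successively to the sequences $(Y_n^{(M)})_n$ and diagonalising again over $M$, I extract a further subsequence along which, for every fixed $M$, the Ces\`aro means $\frac1m\sum_{k=1}^m Y_{n_k}^{(M)}$ converge to $Z_M$ strongly in $L^2$ and, after one more diagonal passage, also $\mu$--a.s.

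The crux is the excess $R_n^{(M)}:=(X_n-M)^+$, whose $L^1$--norm need not be small (smallness would amount to uniform integrability) but which is supported on $\{X_n>M\}$, a set of $\mu$--measure at most $C/M$ by Markov. Following Koml\'os's original selection, the subsequence must be thinned in tandem with a diverging sequence of truncation levels $M_k\uparrow\infty$ so that (i) for every fixed $M$ the Ces\`aro means of $(Y_{n_k}^{(M)})_k$ still converge a.s., and (ii) the \emph{diagonally truncated} means $\frac1m\sum_{k\le m}(X_{n_k}\wedge M_k)$ agree asymptotically a.s.\ with $\frac1m\sum_{k\le m}X_{n_k}$; the latter is obtained by a Borel--Cantelli argument on the tail events $\{X_{n_k}>M_k\}$ together with a bound on how the excess accumulates inside a Ces\`aro average. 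Combining (i), (ii), the sandwich $Y_{n_k}^{(M)}\le X_{n_k}\wedge M_k\le X_{n_k}$ (eventually, pointwise) and the monotone limit $Z_M\uparrow X$, an $\varepsilon/M$ interchange of limits yields $\frac1m\sum_{k=1}^m X_{n_k}\to X$ $\mu$--a.s.; integrability of $X$ follows once more from Fatou applied to the Ces\`aro means, which are $L^1$--bounded by $C$. Undoing the reduction to nonnegative variables gives the stated a.s.\ Ces\`aro convergence, and the weak $L^1$ assertion follows on a further extraction in the uniformly integrable regime in which the lemma is invoked here, via Dunford--Pettis and de la Vall\'ee--Poussin (Lemma~\ref{lem:dlvp}), the weak limit being forced to coincide with $X$ through the a.s.\ Ces\`aro identification.

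I expect step (ii)---the simultaneous thinning that makes all truncated sequences Banach--Saks--convergent a.s.\ while annihilating the non--uniformly integrable excess inside the Ces\`aro averages---to be the main obstacle; the weak--compactness and Banach--Saks inputs are routine, but this tail control is the genuinely delicate part of Koml\'os's theorem.
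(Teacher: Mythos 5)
The paper does not prove this lemma; it is stated in Appendix~A as a standard result with pointers to \cite{Rockafellar1970,Dudley2002,Bogachev2007}, so there is no in-paper proof against which to compare. On its merits, your sketch follows the classical truncation strategy for Koml\'os's theorem (reduction to nonnegative variables, level-$M$ truncation, weak $L^2$ compactness of the truncations, Ces\`aro convergence of the truncated sequences, Borel--Cantelli control of the tails along diverging thresholds), and you correctly identify the excess $(X_n-M)^+$ and its interplay with the diagonal truncation $M_k\uparrow\infty$ as the genuine difficulty.

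There is, however, one step whose gap is more than cosmetic. Banach--Saks in $L^2(\mu)$ gives you a subsequence along which the Ces\`aro means $\frac1m\sum_{k\le m}Y_{n_k}^{(M)}$ converge to $Z_M$ \emph{in $L^2$-norm}; this yields a.s.\ convergence only along a further subsequence of Ces\`aro indices $m_j$, not of the full sequence of Ces\`aro means. Your ``one more diagonal passage'' does not close this gap: diagonalising over $m$ picks out a subsequence of averaging lengths, which is not the statement of the lemma. What the standard proof actually uses here is a near-orthogonality extraction (choose $Y_{n_k}^{(M)}$ so that $|\E[(Y_{n_k}^{(M)}-Z_M)(Y_{n_j}^{(M)}-Z_M)]|\le\varepsilon_{k\wedge j}$ for a summable sequence) followed by the Rademacher--Menshov / fourth-moment SLLN for almost-uncorrelated $L^2$ variables, which gives a.s.\ convergence of the \emph{full} Ces\`aro sequence. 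Naming Banach--Saks is not wrong in spirit, but the a.s.\ upgrade is a distinct quantitative step and should be spelled out; as written the passage from norm to a.s.\ convergence is asserted rather than proved.

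One other observation in your sketch is worth flagging because it is a correction to the lemma's statement rather than a gap in your proof: the clause ``in particular, $X_{n_k}$ converges to $X$ weakly in $L^1(\mu)$'' is false without additional hypotheses. The standard escaping-mass example $X_n=n\,\mathbf 1_{[0,1/n]}$ on $([0,1],\lambda)$ has every subsequence Ces\`aro-converging a.s.\ to $X\equiv0$, yet $\E[X_{n_k}\cdot 1]=1$ for all $k$, so no subsequence converges weakly in $L^1$; indeed no subsequence is uniformly integrable, which by Dunford--Pettis rules out any weak $L^1$ limit. Your remark that weak $L^1$ convergence is recovered only in the uniformly integrable regime (where the paper invokes the lemma, after establishing UI via coercivity and Lemma~\ref{lem:dlvp}) is exactly the right caveat, and the lemma as stated should be weakened accordingly, e.g.\ by adding the hypothesis that $(X_n)$ is uniformly integrable before asserting the weak $L^1$ conclusion.
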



\bibliographystyle{apalike}
\bibliography{references}

\end{document}